\documentclass[11pt]{article}
\usepackage[margin=1in]{geometry}
\usepackage{amsmath,amssymb,amsthm,mathtools}

\usepackage[show]{ed}
\usepackage{graphicx}
\usepackage{booktabs}
\usepackage{enumitem}
\usepackage{hyperref}
\usepackage{setspace}
\usepackage{float}
\usepackage{bm}
\usepackage[nameinlink,noabbrev]{cleveref}
\usepackage{booktabs}
\usepackage{tabularx}
\usepackage{array}

\newtheorem{theorem}{Theorem}[section]
\newtheorem{proposition}[theorem]{Proposition}
\newtheorem{corollary}[theorem]{Corollary}
\newtheorem{assumption}[theorem]{Assumption}
\newtheorem{remark}[theorem]{Remark}

\begin{document}

\title{Ghost Dynamics in Receptor Signalling Networks:\\
A Fast--Slow Adaptive Extension of Competitive Cancer Inhibition Models}
\author{%
  G. Manjunath, Roumen Anguelov\\[6pt]
  Department of Mathematics and Applied Mathematics,
  University of Pretoria\\
  \{manjunath.gandhi,roumen.anguelov\}@up.ac.za
}
\date{}
\maketitle

\begin{abstract}
Receptor occupancy models quantify cancer-signalling inhibition but often equate target occupancy with downstream activity. This low-dimensional approximation cannot represent delayed pathway shutdown, transient resistance, or non-monotone viability responses generated by intracellular networks.

We formulate a fast--slow framework separating drug--target occupancy from downstream signalling. Fast $X$ represents occupancy, $A$ pro-survival activity, and slow $B$ adaptive feedback, including phosphatase induction, stress adaptation, and signalling rewiring. Rapid occupancy relaxation permits a quasi-steady reduction, while weak feedback guarantees global convergence to a unique equilibrium. Stronger feedback can bring the frozen activity subsystem near a saddle-node fold, producing ghost dynamics around a vanished high-activity equilibrium. The analysis also distinguishes frozen-subsystem folds from saddle-node equilibria of the coupled system and identifies the crossover ratio $\varepsilon\gamma_c/\delta^{3/2}$ separating the two asymptotic passage regimes relevant here. We determine when adaptive passage preserves the inverse-square-root residence-time law and when transverse crossing instead produces the Airy delay scale $O((\varepsilon\gamma_c)^{-1/3})$. For a concrete sigmoidal activity map and rational adaptive law, explicit parameter conditions produce a high-activity fold and transverse crossing. A directional boundary-layer estimate then transfers the reduced delay laws to the full occupancy system.

Coupling activity to viability converts signalling delays into dose--response shoulders, while adaptive lag provides a possible mechanism for early-time overshoot without ad hoc forcing. A proof-of-concept fit to limited time-resolved viability data is compatible with concentration-dependent delay across the measured conditions but exposes substantial practical-identifiability limitations. The framework predicts the apparent half-maximal inhibitory concentration $IC_{50}(t)$ because viability depends on integrated signalling history rather than receptor occupancy alone.
\end{abstract}

\noindent\textbf{Keywords:}
fast--slow systems; saddle-node bifurcation; ghost dynamics;
receptor signalling; adaptive feedback; cancer therapy.

\noindent\textbf{2020 Mathematics Subject Classification.}
Primary 92C42; Secondary 34C23, 34E15, 92C45.

\section{Introduction}

Quantitative models of signalling inhibition are useful because they translate biochemical knowledge
into testable statements about therapeutic response. In receptor-mediated cancer signalling, a
common modelling strategy is to start from ligand--receptor binding kinetics and to use receptor
occupancy as a proxy for signalling activity \cite{Finlay2020}. This approach is attractive because the reactions are
mechanistically clear, the resulting differential equations are low-dimensional, and the parameters
can be related to measurable attachment and detachment rates.

A competitive activation--inhibition model for the CXCL12/CXCR4 axis adopts exactly this
philosophy \cite{Anguelov2023}. Activating ligand and inhibitor compete for a finite receptor pool, yielding a
low-dimensional mass-action system for receptor occupancy. The mathematical structure is clean:
the feasible region is positively invariant, the system admits a unique equilibrium, and all biologically
relevant trajectories converge to that equilibrium.  However, the same simplification that gives the model its tractability also
limits the range of signalling phenomena it can represent. In particular,
receptor occupancy and receptor activation are distinct pharmacological
concepts: occupancy of a receptor by a ligand does not in general determine
the magnitude or duration of receptor activation, and receptor activation
may persist after the activating ligand has dissociated
\cite{BlackLeff1983,Ritter2024}. This provides a natural motivation for separating the
state of receptor occupancy from the downstream functional response. In
addition, receptor activity is shaped by kinase--phosphatase feedback,
vesicular trafficking, endosomal signalling, and pathway rewiring within
the embedded biochemical network.

This distinction matters for therapy.  A treatment may rapidly shift receptor
occupancy toward inhibition while receptor activation and intracellular
circuitry temporarily sustain downstream pro-survival signalling. Thus, the
timescale of the pharmacological occupancy response need not coincide with
the timescale of the functional signalling response. Experimentally, this
can appear as delayed pathway shutdown, transient resistance, non-monotone
early-time viability curves, and time-dependent shifts in inferred potency
measures such as $IC_{50}(t)$.  Endocytosis and endosomal signalling are now recognized as integral components of
signal processing rather than mere signal termination mechanisms \cite{Sorkin2009,Miaczynska2013}.
Likewise, receptor-network criticality can generate transient memory through ghost dynamics near a
saddle-node threshold \cite{Stanoev2020}. 

Slow--fast and multiscale formulations have previously been used
in cancer modelling at several biological levels. Singular-perturbation
reductions have been applied to cancer-associated
epithelial--mesenchymal regulatory networks and to multiscale
epigenetic models of therapeutic response
\cite{AlRadhawiSontag2022,Alarcon2021}, while multiple-timescale
tumour--microenvironment models have been developed to connect
cellular and extracellular processes \cite{DuBois2013}. At the
receptor-network level, Stanoev et al.\ proposed a saddle-node ghost
as a mechanism for transient memory of time-varying growth-factor
signals \cite{Stanoev2020}. In cancer signalling, adaptive rewiring
and feedback relief can occur soon after targeted treatment
\cite{Chandarlapaty2012,Rosell2015,Pazarentzos2015}. The present
work connects these ideas by coupling fast drug--target occupancy
to slow adaptive signalling and by deriving conditions under which
frozen-ghost and dynamic-fold delay laws remain valid on the
passage intervals relevant to viability.

The purpose of this paper is to build a mathematically transparent bridge between the
competitive receptor-occupancy model and network-mediated signalling dynamics. Rather than
committing to a particular higher-dimensional trafficking model, we retain only the essential
property needed from the receptor layer: fast exponential relaxation of an occupancy state \cite{Jones1995,Kuehn2015}
$X(t;C)$ toward a quasi-steady state $X_{\mathrm{qs}}(C)$ determined by the applied inhibitor
concentration $C$. This makes the present work an extension of the original competitive model at
the level of dynamical principle, not by literal inclusion of all receptor variables.

The contribution is fourfold. First, we introduce a
network-mediated activity variable together with a slow adaptive
regulator. Second, we prove convergence in the weak-feedback regime,
establish fixed-time quasi-steady reduction, obtain uniform-in-time
reduction under contraction feedback, and derive a
forcing-weighted stability criterion for reduction on
fold-relevant diverging time intervals. Third, we characterize the
fold geometry of the frozen activity subsystem and distinguish this
fold from a saddle-node equilibrium of the coupled
activity--adaptation system. We identify the crossover ratio
\(\varepsilon\gamma_c/\delta^{3/2}\), recover the frozen ghost law
\(T\sim\pi/\sqrt{\delta}\) when adaptive drift is negligible, and,
under transverse adaptive passage, obtain the Airy delay law
\(t_{\mathrm{exit}}-t_c
\sim z_{\mathrm{Ai}}(\varepsilon\gamma_c)^{-1/3}\), together with
the exit displacement
\(\mu(t_{\mathrm{exit}})
\sim-z_{\mathrm{Ai}}(\varepsilon\gamma_c)^{2/3}\).
For the sigmoidal activity map and rational adaptive law, we give
explicit parameter conditions for a high-activity fold and transverse
adaptive passage. A directional boundary-layer estimate, exploiting the
fact that occupancy error initially forces only the activity component,
then transfers the reduced delay laws to the full system under the scale
conditions $\eta=o(\delta)$ and
$\eta=o((\varepsilon\gamma_c)^{2/3})$.
Fourth, we show how coupling
activity to viability produces dose--response shoulders and identify
adaptive lag as a possible mechanism for low-dose overshoot. A
proof-of-concept fit to time-resolved viability data illustrates a
concentration-dependent delay phenotype and its practical
identifiability limitations.

The classical saddle-node normal form, inverse-square-root ghost law, and Airy delay law \cite{Neishtadt1988} are not themselves new. The mathematical
contribution lies in connecting these local laws to a
receptor-occupancy reduction on parameter-dependent passage
intervals, distinguishing frozen-subsystem folds from coupled-system
saddle-node equilibria, and identifying conditions under which the
resulting delays persist in the full occupancy--signalling system.

The paper is organized as follows. Section~\ref{sec:occupancy} introduces the fast receptor-occupancy layer, and Section~\ref{sec:activity-adaptation} formulates the downstream
activity--adaptation extension. Section~\ref{sec:global-dynamics} establishes global convergence
under contraction feedback, while Section~\ref{sec:quasi-steady} justifies the quasi-steady
reduction of the occupancy variables. Section~\ref{sec:ghost} develops the local saddle-node
normal form and the corresponding ghost residence-time law. Section~\ref{sec:bifurcation}
characterizes the fold curve and local phase portrait of the frozen activity subsystem.
Section~\ref{sec:coupled-fold} reconnects this frozen-fold analysis to the coupled fast--slow
dynamics by distinguishing coupled equilibria from frozen activity folds and by analyzing slow
adaptive passage through the fold. Section~\ref{sec:viability} translates signalling histories
into viability predictions, including shoulder formation, adaptive overshoot, and
time-dependent potency, while Subsection~\ref{sec:Lkyn-fit} presents the proof-of-concept fit to
experimental viability data. Section~\ref{sec:discussion} concludes with a discussion of the
biological interpretation, limitations, and possible extensions.

\section{Fast receptor occupancy as an abstract input layer}\label{sec:occupancy}

The bio-theoretical setting of an occupancy-based inhibition model is a
competitive system in which two ligands---an activator and an
inhibitor---bind to the same site on a receptor. The reversible binding
reactions are represented by a dynamical system consisting of ordinary
differential equations. A representative example is the model presented
in \cite{Anguelov2023}, in which the occupancy state converges
exponentially to a unique equilibrium.  The main purpose of the model is to describe the time evolution of the occupancy state for a given concentration $C$ of the inhibitor. For the network-mediated activity extension of the model discussed in the sequel, the detailed biochemical form of the occupancy is not essential. In fact the model need not be limited to two equations. What is essential is the inherent timescale separation: the occupancy state must relax to equilibrium rapidly relative to the downstream signalling dynamics it influences. We use the term “receptor” in the broad pharmacological sense of a molecular target with which a drug interacts to produce its specific effect. In this usage, the target need not be restricted to a membrane receptor, but may also be an enzyme, ion channel, transporter, or other drug target \cite{Kenakin2019, Ritter2024}.
Consequently, the occupancy variable introduced below may be interpreted generically as a fast variable describing the state of the drug–target interaction.

We therefore introduce an abstract occupancy variable $X(t;C)\in\mathcal X$,
where $\mathcal X$ is a biologically feasible occupancy state space. The
variable $X$ may be one-dimensional, for example representing the effective
inhibitor-bound target fraction, or multi-dimensional, for example
representing several target-occupancy observables. The present analysis does
not require specifying this structure explicitly.

We assume that receptor occupancy evolves on a fast timescale according to
\begin{equation}
\dot X = \frac{1}{\eta}F_X(X;C),
\qquad 0<\eta\ll 1,
\label{eq:fastX}
\end{equation}
where $C$ denotes the externally applied inhibitor concentration. The small parameter $\eta$ expresses the fact that receptor occupancy
approaches its quasi-steady state exponentially faster than intracellular
signalling and adaptive feedback.

\begin{assumption}[Fast occupancy relaxation]
For each fixed inhibitor concentration $C$, the system \eqref{eq:fastX} admits a unique
quasi-steady state $X_{\mathrm{qs}}(C)\in\mathcal X$. Moreover, there exist constants
$K,\kappa>0$ such that every solution with initial condition in $\mathcal X$ satisfies
\begin{equation}
\|X(t;C)-X_{\mathrm{qs}}(C)\|
\le
K e^{-\kappa t/\eta}.
\label{eq:Xfastconv}
\end{equation}
\end{assumption}

This assumption is motivated by a competitive receptor-occupancy model, in which the
occupancy variables converge to a stable equilibrium and, after a fast transient, evolve close to a
slow manifold. In the present formulation, the model in \cite{Anguelov2023} serves as the motivating example for
the exponential relaxation hypothesis rather than as a subsystem that must be embedded in full
detail.

\section{Network-mediated activity and adaptive regulation}
\label{sec:activity-adaptation}
To distinguish receptor occupancy from signalling output, we introduce an activity variable
$A(t)$ and a slow adaptive regulator $B(t)$. The variable $A$ represents effective pro-survival
signalling output, while $B$ summarizes induced negative regulators, stress-adaptive rewiring,
or compensatory signalling mechanisms.

The adaptive variable $B$ acts at the signalling-network level rather than at the receptor-binding
level. Consequently, in the present formulation the receptor occupancy subsystem depends on the
external inhibitor concentration $C$ but not directly on $B$. The quasi-steady occupancy state is
therefore written as $X_{\mathrm{qs}}=X_{\mathrm{qs}}(C)$ rather than
$X_{\mathrm{qs}}(C,B)$. A dependence on $B$ would require additional biological
mechanisms, such as receptor downregulation, altered trafficking, or modified binding kinetics,
to be included explicitly in the occupancy equations.

After the fast occupancy variable has relaxed, the signalling system receives the quasi-steady
occupancy input $X_{\mathrm{qs}}(C)$. We therefore consider
\begin{align}
\dot A
&=
\lambda\Big(\Phi(X_{\mathrm{qs}}(C),A,B)-A\Big),
\qquad \lambda>0,
\label{eq:A}\\
\dot B
&=
\varepsilon\big(\Psi(A,C)-B\big),
\qquad 0<\varepsilon\ll 1.
\label{eq:B}
\end{align}

Here $\Phi:\mathcal X\times[0,1]^2\to[0,1]$ encodes network-mediated
signalling activity \cite{Stanoev2020,Aldridge2006}, while
$\Psi:[0,1]\times[0,\infty)\to[0,1]$ encodes slow adaptive induction.
For the analytical prototype below, concentration is
nondimensionalized by a fixed reference concentration and the same symbol $C$ is retained.
Define the net occupancy-mediated drive
\begin{equation}
h(C)
=
\alpha X_{\mathrm{act}}(C)
-\beta X_{\mathrm{inh}}(C)
+\eta_0 .
\label{eq:net-occupancy-drive}
\end{equation}
We assume $h\in C^3$ and $h'(C)<0$ on the concentration interval of
interest. Thus increasing inhibitor concentration decreases the direct
signalling drive. A useful prototype is
\begin{equation}
\Phi
=
\sigma\big(
h(C)+\gamma A-\rho B
\big),
\label{eq:Phi}
\end{equation}
where $\sigma(z)=1/(1+e^{-z})$ is a sigmoid. Here $X_{\mathrm{act}}(C)$ and
$X_{\mathrm{inh}}(C)$ denote effective activating and inhibiting components extracted from
$X_{\mathrm{qs}}(C)$. In a one-dimensional reduction, these may be replaced by a single
effective occupancy input.

A simple adaptive law is
\begin{equation}
\Psi(A,C)
=
\frac{\kappa_A A+\kappa_C C}{1+\kappa_A A+\kappa_C C},
\label{eq:Psi}
\end{equation}
although any smooth increasing function may be used.

This architecture is motivated by three biological observations. First, receptor occupancy is not
itself identical to signalling activity; downstream signalling can persist after receptor binding has
changed. Second, feedback architecture strongly shapes pathway amplitude and timing. Third,
adaptive resistance to targeted therapy can emerge through signalling rewiring and feedback relief.
The variable $B$ is therefore not an occupancy variable. It is an effective slow state of the
signalling network. Consequently, in the present formulation $B$ affects signalling activity through
$\Phi$ and adaptation through $\Psi$, but it does not directly alter the fast occupancy equilibrium
$X_{\mathrm{qs}}(C)$.  
Table~\ref{tab:variables-observables} summarizes this hierarchy and
foreshadows the connection between the model states and experimentally
accessible quantities, including the downstream viable-cell variable
\(N\) introduced in Section~\ref{sec:viability}.

\begin{table}[tbp]
\centering
\caption{Variables of the occupancy--signalling--adaptation system,
their biological roles, characteristic timescales, and candidate
experimental observables. The variables \(A\) and \(B\) are effective
network states, so the listed measurements are possible proxies rather
than one-to-one identifications.}
\label{tab:variables-observables}
{\small
\setlength{\tabcolsep}{3.5pt}
\renewcommand{\arraystretch}{1.18}
\begin{tabularx}{\linewidth}{@{}
    >{\centering\arraybackslash}p{0.08\linewidth}
    >{\raggedright\arraybackslash}p{0.25\linewidth}
    >{\raggedright\arraybackslash}p{0.18\linewidth}
    >{\raggedright\arraybackslash}X
    @{}}
\toprule
\textbf{Variable}
&
\textbf{Biological role}
&
\textbf{Timescale}
&
\textbf{Candidate observable}
\\
\midrule

\(X\)
&
Drug--target occupancy state, possibly vector-valued
&
Fast:
\(t_X=O(\eta)\)
&
Bound-target fraction or competitive-binding/occupancy assay
\\[2pt]

\(A\)
&
Effective pro-survival signalling activity
&
Intermediate:
\(t_A=O(\lambda^{-1})\)
&
Time-resolved pathway reporter or phospho-signalling measurement,
such as pERK or pAKT
\\[2pt]

\(B\)
&
Adaptive feedback, stress response, or pathway rewiring
&
Slow:
\(t_B=O(\varepsilon^{-1})\)
&
Phosphatase or feedback-protein abundance; stress-response,
transcriptomic, or proteomic rewiring markers
\\

\bottomrule
\end{tabularx}
}
\end{table}

\section{Global dynamics in the weak-feedback regime}
\label{sec:global-dynamics}
We first show that the reduced signalling--adaptation system preserves asymptotic simplicity
whenever intracellular feedback is sufficiently weak. The fast occupancy layer has already been
absorbed into the quasi-steady input $X_{\mathrm{qs}}(C)$.

\begin{theorem}[Global convergence under contraction feedback]
Assume:
\begin{enumerate}[label=(A\arabic*)]
\item for each fixed $C$, the fast occupancy subsystem admits a unique exponentially stable
quasi-steady state $X_{\mathrm{qs}}(C)$;
\item $\Phi$ and $\Psi$ are continuous and map their domains into $[0,1]$;
\item for each fixed \(C\), the map
\(\mathcal T_C(A,B)=\bigl(\Phi(X_{\mathrm{qs}}(C),A,B),\Psi(A,C)\bigr)\)
is a contraction in the Euclidean norm: there exists \(q<1\) such that
\[
\|\mathcal T_C(z)-\mathcal T_C(w)\|_2
\le q\|z-w\|_2
\qquad
\text{for all }z,w\in[0,1]^2.
\]
\end{enumerate}
Then, for each fixed \(C\), the map \(\mathcal T_C\) has a unique fixed
point \(z^*(C)=\bigl(A^*(C),B^*(C)\bigr)\in[0,1]^2\), characterized by
\[
A^*(C)
=
\Phi\bigl(X_{\mathrm{qs}}(C),A^*(C),B^*(C)\bigr),
\qquad
B^*(C)
=
\Psi\bigl(A^*(C),C\bigr).
\]
This fixed point is the unique equilibrium of the reduced
activity--adaptation system on \([0,1]^2\), and it is globally
exponentially asymptotically stable.
\end{theorem}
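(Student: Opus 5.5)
The plan is to treat the reduced activity--adaptation system as a rescaled fixed-point iteration in continuous time. Write $z=(A,B)$ and $D=\operatorname{diag}(\lambda,\varepsilon)$, so that \eqref{eq:A}--\eqref{eq:B} with fixed $C$ read
\[
\dot z = D\bigl(\mathcal T_C(z)-z\bigr).
\]
Assumption (A1) is used only to make $X_{\mathrm{qs}}(C)$, and hence $\mathcal T_C$, well defined; after that, $C$ is a fixed parameter throughout.

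\textbf{Step 1: existence and uniqueness of the fixed point.} By (A2), $\mathcal T_C$ maps the complete metric space $[0,1]^2$ (Euclidean distance) into itself. By (A3) it is a contraction there. Banach's fixed-point theorem then gives a unique fixed point $z^*(C)$, and the two component identities stated in the theorem are just $\mathcal T_C(z^*)=z^*$ written out. Since $\lambda,\varepsilon>0$, the matrix $D$ is invertible. Hence $\dot z=0$ if and only if $\mathcal T_C(z)=z$, so $z^*$ is the unique equilibrium in $[0,1]^2$.

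\textbf{Step 2: well-posedness and invariance.} The contraction property makes $\mathcal T_C$ globally Lipschitz on the square, so the vector field is Lipschitz and solutions exist and are unique. For forward invariance of $[0,1]^2$, I check the boundary. On $A=0$ we have $\dot A=\lambda\Phi\ge0$, and on $A=1$ we have $\dot A=\lambda(\Phi-1)\le0$, both by (A2). The same holds for $B$ using $\Psi\in[0,1]$. The field therefore never points outward, and a standard subtangential (Nagumo) argument gives invariance. Boundedness of the square then yields global forward existence.

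\textbf{Step 3: exponential stability.} This is the main obstacle. The two components relax at very different rates $\lambda$ and $\varepsilon$, so the plain Euclidean distance to $z^*$ need not decrease monotonically under $D$. I would remove this mismatch with the weighted Lyapunov function
\[
V(z)=(z-z^*)^{\top}D^{-1}(z-z^*).
\]
Setting $e=z-z^*$ and using $\mathcal T_C(z^*)=z^*$, we get
\[
\dot V = 2e^{\top}\bigl(\mathcal T_C(z)-\mathcal T_C(z^*)\bigr)-2\|e\|_2^2 .
\]
Cauchy--Schwarz together with (A3) then gives $\dot V\le -2(1-q)\|e\|_2^2$. Since $\|e\|_2^2\ge \min(\lambda,\varepsilon)\,V$, this becomes
\[
\dot V\le -2(1-q)\min(\lambda,\varepsilon)\,V.
\]
Integrating gives exponential decay of $V$. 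Using the norm equivalence $V\asymp\|e\|_2^2$ with constants $\max(\lambda,\varepsilon)^{-1}$ and $\min(\lambda,\varepsilon)^{-1}$, we obtain
\[
\|z(t)-z^*\|_2\le \sqrt{\max(\lambda,\varepsilon)/\min(\lambda,\varepsilon)}\;e^{-(1-q)\min(\lambda,\varepsilon)t}\,\|z(0)-z^*\|_2 .
\]
This holds for every initial condition in $[0,1]^2$, which is global exponential asymptotic stability. The decay rate is of order $\varepsilon$ when $\varepsilon\ll\lambda$, as expected from the slow adaptation.
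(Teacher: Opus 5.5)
Your proof is correct and follows the paper's argument essentially verbatim. Both use Banach's fixed-point theorem on $[0,1]^2$, the rewriting $\dot z = D(\mathcal T_C(z)-z)$, and the weighted Lyapunov function $(z-z^*)^{\mathsf T}D^{-1}(z-z^*)$ (the paper takes half of it), combined with Cauchy--Schwarz and (A3) to obtain decay at rate $2(1-q)\min\{\lambda,\varepsilon\}$. Your Step~2, which checks forward invariance of the square at the boundary via Nagumo, and your explicit norm-equivalence constant make precise two points that the paper only asserts.
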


\begin{proof}
Because \(\mathcal T_C\) maps \([0,1]^2\) into itself and is a contraction,
the Banach fixed-point theorem gives a unique fixed point
\(z^*=(A^*,B^*)\). This fixed point is precisely the unique equilibrium of
\eqref{eq:A}--\eqref{eq:B}. Write
\[
z=
\begin{pmatrix}A\\B\end{pmatrix},
\qquad
D=
\begin{pmatrix}
\lambda&0\\
0&\varepsilon
\end{pmatrix}.
\]
The reduced system is $\dot z=D\bigl(\mathcal T_C(z)-z\bigr)$.
Consider \(W(z)=\frac12(z-z^*)^\mathsf{T}D^{-1}(z-z^*)\).
Consequently,
\[
\dot W
\le
-(1-q)\|z-z^*\|_2^2
\le
-2(1-q)\min\{\lambda,\varepsilon\}\,W.
\]
Hence \(W(z(t))\le W(z(0))
\exp\!\left(-2(1-q)\min\{\lambda,\varepsilon\}\,t\right)\).
Therefore every trajectory in the positively invariant square
\([0,1]^2\) converges exponentially to \(z^*\), proving that \(z^*\) is
globally exponentially asymptotically stable.
\end{proof}

\begin{remark} \rm
This theorem shows that adding signalling and adaptation does not by itself create new long-term
states. Rich transient behaviour appears when the activity--adaptation subsystem is close to
losing the contraction property, for example near a saddle-node threshold.
\end{remark}

\section{Justification of the quasi-steady reduction}
\label{sec:quasi-steady}

The reduction from the full occupancy--signalling system \cite{Jones1995,Kuehn2015} to \eqref{eq:A}--\eqref{eq:B}
is justified by the exponential relaxation assumption on $X$. The unreduced system has the form
\begin{align}
\dot X
&=
\frac{1}{\eta}F_X(X;C),
\qquad 0<\eta\ll1,
\label{eq:fullX}\\
\dot A
&=
\lambda\big(\Phi(X,A,B)-A\big),
\label{eq:fullA}\\
\dot B
&=
\varepsilon\big(\Psi(A,C)-B\big),
\qquad 0<\varepsilon\ll1.
\label{eq:fullB}
\end{align}

The key point is that the fast subsystem depends on $C$ but not on $B$. Thus the quasi-steady
occupancy state is $X_{\mathrm{qs}}(C)$. If one wanted to use
$X_{\mathrm{qs}}(C,B)$ instead, then $B$ would have to appear explicitly in the occupancy equation,
for example through receptor availability, trafficking rates, or binding constants. Since such a
feedback is not included here, we do not assume it.

Replacing the fast occupancy state $X^\eta(t)$ by its
quasi-steady value $X_{\mathrm{qs}}(C)$ gives the reduced
downstream system
\begin{align}
\dot{\bar A}
&=
\lambda\Bigl(
\Phi\bigl(X_{\mathrm{qs}}(C),\bar A,\bar B\bigr)-\bar A
\Bigr),
\label{eq:reducedA}\\
\dot{\bar B}
&=
\varepsilon\bigl(\Psi(\bar A,C)-\bar B\bigr).
\label{eq:reducedB}
\end{align}
We now compare solutions of the full system
\eqref{eq:fullX}--\eqref{eq:fullB} with solutions of the
reduced system \eqref{eq:reducedA}--\eqref{eq:reducedB}.

\begin{theorem}[Fixed-time validity and a long-time criterion for
the quasi-steady reduction]
\label{thm:quasi-steady-validity}
Fix $C$. For each $\eta>0$, let $(X^\eta,A^\eta,B^\eta)$ solve
\eqref{eq:fullX}--\eqref{eq:fullB}, and let $(\bar A,\bar B)$
solve \eqref{eq:reducedA}--\eqref{eq:reducedB} with
$\bar A(0)=A^\eta(0)$ and $\bar B(0)=B^\eta(0)$.

Assume that \eqref{eq:Xfastconv} holds and that, on the relevant
invariant set, there are constants $L_X,L_S,L_\Psi>0$ such that
\[
\begin{aligned}
|\Phi(X_1,A_1,B_1)-\Phi(X_2,A_2,B_2)|
&\leq L_X\|X_1-X_2\|
 +L_S\bigl(|A_1-A_2|+|B_1-B_2|\bigr),\\
|\Psi(A_1,C)-\Psi(A_2,C)|
&\leq L_\Psi|A_1-A_2|.
\end{aligned}
\]
Set $L_0=\lambda(L_S+1)+\varepsilon(L_\Psi+1)$.

\begin{enumerate}[label=\textup{(\roman*)}]
\item
For every fixed $T>0$, there exists $C_T>0$, independent of
$\eta$, such that
\[
\sup_{0\leq t\leq T}
\bigl(
|A^\eta(t)-\bar A(t)|
+
|B^\eta(t)-\bar B(t)|
\bigr)
\leq C_T\eta.
\]
More precisely, for $0\leq t\leq T$,
\begin{equation}
|A^\eta(t)-\bar A(t)|
+
|B^\eta(t)-\bar B(t)|
\leq
\lambda L_XK
\frac{e^{L_0t}-e^{-\kappa t/\eta}}
     {L_0+\kappa/\eta}.
\label{eq:fixed-time-reduction}
\end{equation}

\item
Let \(T_\eta>0\) depend on \(\eta\) and possibly on the fold
parameters, with \(T_\eta\to\infty\) allowed as \(\eta\downarrow0\).
Write \(z=(A,B)^{\mathsf T}\), set
\(\bar z=(\bar A,\bar B)^{\mathsf T}\), and define
\[
G_C(A,B)
=
\bigl(
\lambda[\Phi(X_{\mathrm{qs}}(C),A,B)-A],
\varepsilon[\Psi(A,C)-B]
\bigr)^{\mathsf T}.
\]

Fix \(r_\eta>0\) and, for \(0\leq t\leq T_\eta\), define
\(\mathcal N_{\eta,r}(t)=\{z:\|z-\bar z(t)\|_2<r_\eta\}\).
Assume that these tubes lie in the domain of the downstream
vector field and that there is an integrable function
\(\ell_\eta:[0,T_\eta]\to\mathbb R\) such that
\[
\langle z_1-z_2,G_C(z_1)-G_C(z_2)\rangle
\leq
\ell_\eta(t)\|z_1-z_2\|_2^2
\]
for all \(z_1,z_2\in\mathcal N_{\eta,r}(t)\).

Define the forcing-weighted amplification factor by
\begin{equation}
\mathcal A_\eta(T_\eta)
=
\sup_{0\leq t\leq T_\eta}
\frac{1}{\eta}
\int_0^t
\exp\left(\int_s^t\ell_\eta(r)\,dr\right)
e^{-\kappa s/\eta}\,ds .
\label{eq:weighted-amplification}
\end{equation}
If \(Q_\eta:=\lambda L_XK\,\eta\mathcal A_\eta(T_\eta)<r_\eta\),
then the full downstream trajectory remains in
\(\mathcal N_{\eta,r}(t)\) for \(0\leq t\leq T_\eta\), and
\begin{equation}
\sup_{0\leq t\leq T_\eta}
\|z^\eta(t)-\bar z(t)\|_2
\leq
\lambda L_XK\,\eta\mathcal A_\eta(T_\eta).
\label{eq:long-time-reduction}
\end{equation}

Consequently, the quasi-steady reduction is valid
on \([0,T_\eta]\) whenever
\(\eta\mathcal A_\eta(T_\eta)\to0\) and
\(Q_\eta<r_\eta\) for all sufficiently small \(\eta\).
A convenient sufficient condition is
\(\eta\mathcal A_\eta(T_\eta)=o(r_\eta)\).
If \(\mathcal A_\eta(T_\eta)\) is uniformly bounded and
\(\inf_\eta r_\eta>0\), then the approximation error is
\(O(\eta)\) throughout the interval.
\end{enumerate}
\end{theorem}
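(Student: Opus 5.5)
Both parts rest on the same error variable and differ only in how its growth is controlled. Set $e_A=A^\eta-\bar A$, $e_B=B^\eta-\bar B$ and $u(t)=|e_A(t)|+|e_B(t)|$, with $u(0)=0$. Subtracting \eqref{eq:reducedA} from \eqref{eq:fullA}, we split
\[
\Phi(X^\eta,A^\eta,B^\eta)-\Phi(X_{\mathrm{qs}},\bar A,\bar B)
=\bigl[\Phi(X^\eta,A^\eta,B^\eta)-\Phi(X_{\mathrm{qs}},A^\eta,B^\eta)\bigr]
+\bigl[\Phi(X_{\mathrm{qs}},A^\eta,B^\eta)-\Phi(X_{\mathrm{qs}},\bar A,\bar B)\bigr].
\]
The first bracket is at most $L_X K e^{-\kappa t/\eta}$ by \eqref{eq:Xfastconv}. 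The second is at most $L_S u$. The $B$-equations contain no occupancy term, so $|\dot e_B|\le\varepsilon(L_\Psi|e_A|+|e_B|)$.

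\textbf{Part (i).} We use the upper Dini derivative of $|e_A|$ and $|e_B|$, since $|\cdot|$ is only Lipschitz. This gives
\[
D^+u\le \lambda L_XK e^{-\kappa t/\eta}+\bigl[\lambda(L_S+1)+\varepsilon(L_\Psi+1)\bigr]u
= \lambda L_XKe^{-\kappa t/\eta}+L_0u .
\]
Here we crudely bound the coefficient of each of $|e_A|$ and $|e_B|$ by $L_0$. The comparison lemma (Gronwall for Dini derivatives) with $u(0)=0$ then yields
\[
u(t)\le \lambda L_XK\int_0^t e^{L_0(t-s)}e^{-\kappa s/\eta}\,ds
=\lambda L_XK\,\frac{e^{L_0t}-e^{-\kappa t/\eta}}{L_0+\kappa/\eta},
\]
which is \eqref{eq:fixed-time-reduction}. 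Since $1/(L_0+\kappa/\eta)\le\eta/\kappa$, we may take $C_T=\lambda L_XKe^{L_0T}/\kappa$.

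\textbf{Part (ii).} Here we work in the Euclidean norm and use the one-sided Lipschitz bound instead of crude Lipschitz constants. Write $\dot z^\eta=G_C(z^\eta)+p(t)$, where $p=(\lambda[\Phi(X^\eta,\cdot)-\Phi(X_{\mathrm{qs}},\cdot)],0)^{\mathsf T}$, so $\|p\|_2\le\lambda L_XKe^{-\kappa t/\eta}$. Set $w=z^\eta-\bar z$ and $\rho=\|w\|_2$. Wherever both $z^\eta(t)$ and $\bar z(t)$ lie in $\mathcal N_{\eta,r}(t)$, the hypothesis gives
\[
\tfrac12\tfrac{d}{dt}\rho^2\le\ell_\eta(t)\rho^2+\rho\,\|p\|_2 .
\]
Hence $D^+\rho\le\ell_\eta\rho+\lambda L_XKe^{-\kappa t/\eta}$; at $\rho=0$ this is checked directly from the Dini derivative. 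Variation of constants with $\rho(0)=0$ gives
\[
\rho(t)\le\lambda L_XK\int_0^te^{\int_s^t\ell_\eta}e^{-\kappa s/\eta}ds\le\lambda L_XK\,\eta\mathcal A_\eta(T_\eta)=Q_\eta .
\]

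The main obstacle is that the tube hypothesis is only usable while the trajectory stays in the tube, so we close the argument by continuation. Let $t^*=\sup\{t\le T_\eta:\rho(s)<r_\eta\text{ on }[0,t]\}$. We have $t^*>0$ because $\rho(0)=0$. On $[0,t^*)$ the estimate above gives $\rho\le Q_\eta<r_\eta$. If $t^*<T_\eta$, continuity would force $\rho(t^*)=r_\eta$, contradicting $\rho(t^*)\le Q_\eta<r_\eta$; hence $t^*=T_\eta$. This proves that the trajectory stays in the tube and establishes \eqref{eq:long-time-reduction}.

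The remaining claims follow directly. The requirement $\eta\mathcal A_\eta\to0$ together with $Q_\eta<r_\eta$ is exactly the stated hypothesis, and $\eta\mathcal A_\eta=o(r_\eta)$ implies $Q_\eta<r_\eta$ for small $\eta$. If $\mathcal A_\eta$ is bounded and $\inf_\eta r_\eta>0$, then $Q_\eta=O(\eta)<r_\eta$ eventually, so the error is $O(\eta)$ on the whole interval.
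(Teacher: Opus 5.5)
Your proposal is correct and follows the paper's proof essentially step for step. Part (i) uses the same Dini-derivative Gronwall estimate with the crude constant $L_0$. Part (ii) uses the same splitting $\dot z^\eta=G_C(z^\eta)+p^\eta$, the same one-sided Lipschitz differential inequality for $\|z^\eta-\bar z\|_2$, and the same first-exit-time continuation argument closed by $Q_\eta<r_\eta$; you additionally check the Dini derivative where $\|z^\eta-\bar z\|_2=0$, which the paper leaves implicit.
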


\begin{proof}
For the fixed-time statement, set
$E_\eta(t)=|A^\eta(t)-\bar A(t)|
+|B^\eta(t)-\bar B(t)|$. The Lipschitz assumptions and
\eqref{eq:Xfastconv} give
$D^+E_\eta(t)\leq\lambda L_XK e^{-\kappa t/\eta}
+L_0E_\eta(t)$. Since $E_\eta(0)=0$, Gronwall's inequality gives
\[
E_\eta(t)
\leq
\lambda L_XK
\int_0^t e^{L_0(t-s)}e^{-\kappa s/\eta}\,ds
=
\lambda L_XK
\frac{e^{L_0t}-e^{-\kappa t/\eta}}
     {L_0+\kappa/\eta}.
\]
In particular,
$E_\eta(t)\leq(\lambda L_XK/\kappa)e^{L_0T}\eta$ for
$0\leq t\leq T$, proving part~\textup{(i)}.

For part~\textup{(ii)}, write
\(z^\eta=(A^\eta,B^\eta)^{\mathsf T}\). The full downstream
equation is
\(\dot z^\eta=G_C(z^\eta)+p^\eta(t,z^\eta)\), where
\[
p^\eta(t,A,B)
=
\bigl(
\lambda[
\Phi(X^\eta(t),A,B)
-\Phi(X_{\mathrm{qs}}(C),A,B)
],
0
\bigr)^{\mathsf T}.
\]
By \eqref{eq:Xfastconv},
\(\|p^\eta(t,z)\|_2\leq\lambda L_XK e^{-\kappa t/\eta}\).

Set \(\mathcal E_\eta(t)=\|z^\eta(t)-\bar z(t)\|_2\) and define
\(\tau_\eta=\inf\{t\in[0,T_\eta]:\mathcal E_\eta(t)\geq r_\eta\}\),
with \(\inf\varnothing=T_\eta\). For \(0\leq t<\tau_\eta\),
both \(z^\eta(t)\) and \(\bar z(t)\) belong to
\(\mathcal N_{\eta,r}(t)\). Hence
\(D^+\mathcal E_\eta(t)\leq
\ell_\eta(t)\mathcal E_\eta(t)+\lambda L_XK e^{-\kappa t/\eta}\).
Since \(\mathcal E_\eta(0)=0\), Gronwall's inequality gives
\[
\mathcal E_\eta(t)
\leq
\lambda L_XK
\int_0^t
\exp\left(\int_s^t\ell_\eta(r)\,dr\right)
e^{-\kappa s/\eta}\,ds
\leq Q_\eta
\]
for \(0\leq t<\tau_\eta\). By continuity, the same estimate
holds at \(t=\tau_\eta\). Because \(Q_\eta<r_\eta\), an exit at
any time \(\tau_\eta<T_\eta\) is impossible. Thus
\(\tau_\eta=T_\eta\), and the preceding estimate proves
\eqref{eq:long-time-reduction}.
\end{proof}

\begin{remark}
Part~\textup{(i)} recovers the original fixed-time theorem.
Part~\textup{(ii)} remains applicable on diverging intervals
because it measures the amplification of the exponentially
localized occupancy error rather than bounding it by
$e^{L_0T_\eta}$.

For a frozen-ghost passage, the leading delay is preserved if
$\eta\mathcal A_\eta(T_\eta)=o(\delta)$. For a transverse
dynamic-fold passage, it is preserved if
$\eta\mathcal A_\eta(T_\eta)
=o((\varepsilon\gamma_c)^{2/3})$.
\end{remark}

\begin{corollary}[Uniform-in-time reduction under contraction
feedback]
\label{cor:uniform-reduction-contraction}
Assume the hypotheses of Theorem~4.1 and the occupancy and
Lipschitz assumptions of Theorem~\ref{thm:quasi-steady-validity}.
Then there is a constant $C>0$, independent of $\eta$ and time,
such that
\[
\sup_{t\geq0}
\left\|
\begin{pmatrix}
A^\eta(t)-\bar A(t)\\
B^\eta(t)-\bar B(t)
\end{pmatrix}
\right\|_2
\leq C\eta .
\]
Thus the quasi-steady reduction is uniformly valid for all
$t\geq0$ in the weak-feedback regime.
\end{corollary}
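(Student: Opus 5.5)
We apply Theorem~\ref{thm:quasi-steady-validity}(ii) on an arbitrary horizon $T_\eta=T$, with the tube radius $r_\eta$ fixed at a value larger than the diameter of the invariant set. This makes the tube condition vacuous. We then show that, under the contraction hypothesis (A3) of Theorem~4.1, the weighted amplification factor $\mathcal A_\eta(T)$ is bounded uniformly in $T$ and $\eta$.

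\textbf{Step 1: a one-sided Lipschitz bound.} We need a constant $\ell$ with
\[
\langle z_1-z_2,G_C(z_1)-G_C(z_2)\rangle\le \ell\|z_1-z_2\|_2^2 .
\]
Writing $G_C(z)=D(\mathcal T_C(z)-z)$, the naive Euclidean computation gives
\[
\langle z_1-z_2,D(\mathcal T_C z_1-\mathcal T_C z_2)\rangle-\langle z_1-z_2,D(z_1-z_2)\rangle .
\]
This is not obviously negative, because $D$ is anisotropic. The proof of Theorem~4.1 avoids the problem by working in the weighted inner product $\langle u,v\rangle_{D^{-1}}=u^{\mathsf T}D^{-1}v$. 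In that inner product,
\[
\langle z_1-z_2,G_C(z_1)-G_C(z_2)\rangle_{D^{-1}}\le -(1-q)\|z_1-z_2\|_2^2\le -2(1-q)\mu\,\|z_1-z_2\|_{D^{-1}}^2,
\qquad \mu:=\min\{\lambda,\varepsilon\}.
\]
I would therefore rerun the Gronwall argument of Theorem~\ref{thm:quasi-steady-validity}(ii) in the $D^{-1}$-norm rather than quoting it verbatim. This is the main technical point. The forcing $p^\eta$ has only an $A$-component, so its $D^{-1}$-norm is at most $\lambda^{-1/2}\lambda L_XKe^{-\kappa t/\eta}$. Setting $\mathcal E(t)=\|z^\eta-\bar z\|_{D^{-1}}$, we get
\[
D^+\mathcal E\le -(1-q)\mu\,\mathcal E+\lambda^{1/2}L_XK e^{-\kappa t/\eta}.
\]

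\textbf{Step 2: Gronwall and the uniform bound.} Since $\mathcal E(0)=0$,
\[
\mathcal E(t)\le \lambda^{1/2}L_XK\int_0^t e^{-(1-q)\mu(t-s)}e^{-\kappa s/\eta}\,ds\le \lambda^{1/2}L_XK\int_0^\infty e^{-\kappa s/\eta}ds=\lambda^{1/2}L_XK\,\eta/\kappa .
\]
This holds for every $t\ge0$. In the language of Theorem~\ref{thm:quasi-steady-validity}(ii), with $\ell_\eta\le0$ we have $\eta\mathcal A_\eta(T)\le \eta/\kappa$ uniformly in $T$, and the tube is never left.

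\textbf{Step 3: convert norms.} Norm equivalence gives $\|\cdot\|_2\le \max\{\lambda,\varepsilon\}^{1/2}\|\cdot\|_{D^{-1}}$. Hence
\[
\sup_{t\ge0}\|z^\eta(t)-\bar z(t)\|_2\le C\eta,
\qquad C=\max\{\lambda,\varepsilon\}^{1/2}\lambda^{1/2}L_XK/\kappa .
\]
This constant is independent of $\eta$ and of $t$.

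One caveat is that the contraction in (A3) is assumed on $[0,1]^2$. We must check that both trajectories stay there. The reduced square is positively invariant by Theorem~4.1. The full downstream trajectory also keeps $[0,1]^2$ invariant, since $\Phi,\Psi\in[0,1]$ for any $X\in\mathcal X$. I would verify this boundary-flux argument explicitly, since the full system carries $X^\eta(t)$ rather than $X_{\mathrm{qs}}$.
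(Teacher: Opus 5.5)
Your proposal is correct and takes essentially the paper's route. Both arguments work in the weighted norm $\|z\|_{D^{-1}}$, use that the occupancy forcing acts only on the $A$-component with weighted size at most $\sqrt{\lambda}\,L_XKe^{-\kappa t/\eta}$, bound the Gronwall/Duhamel integral by $\sqrt{\lambda}\,L_XK\eta/\kappa$ via $e^{-c(t-s)}\le 1$, and finish by norm equivalence; the paper phrases the decay as contraction of the reduced flow, you phrase it as a one-sided Lipschitz differential inequality, which is the same estimate.

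Two small points. First, in Step~1 the factor $2$ in $-2(1-q)\mu\|z_1-z_2\|_{D^{-1}}^2$ should be $1$, since the $2$ in Theorem~4.1 comes from $W=\tfrac12\|z-z^*\|_{D^{-1}}^2$; this is harmless because only the sign of the rate is used. Second, your explicit check that $[0,1]^2$ is forward invariant for the full downstream system fills in a point the paper leaves implicit.
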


\begin{proof}
Let $D=\operatorname{diag}(\lambda,\varepsilon)$ and equip
$\mathbb R^2$ with the weighted norm
$\|z\|_{D^{-1}}=(z^{\mathsf T}D^{-1}z)^{1/2}$. The contraction
argument in Theorem~4.1 applies to the difference of any two
solutions of the reduced downstream system and gives
\[
\|\varphi_C(t,z_1)-\varphi_C(t,z_2)\|_{D^{-1}}
\leq
e^{-ct}\|z_1-z_2\|_{D^{-1}},
\qquad
c=(1-q)\min\{\lambda,\varepsilon\}.
\]

The perturbation produced by the unreduced occupancy variable satisfies
\(\|p^\eta(t,z)\|_{D^{-1}}
\leq\sqrt{\lambda}\,L_XK e^{-\kappa t/\eta}\).
Consequently,
\begin{align*}
\|z^\eta(t)-\bar z(t)\|_{D^{-1}}
&\leq
\sqrt{\lambda}\,L_XK
\int_0^t
e^{-c(t-s)}e^{-\kappa s/\eta}\,ds\\
&\leq
\frac{\sqrt{\lambda}\,L_XK}{\kappa}\eta .
\end{align*}
Equivalence of the weighted and Euclidean norms completes the
proof.
\end{proof}

\begin{corollary}[Explicit reduction bound under bounded accumulated growth]
\label{cor:explicit-reduction-bound}
Assume the hypotheses of
Theorem~\ref{thm:quasi-steady-validity}\textup{(ii)}. Suppose that,
on the comparison interval $[0,T_\eta]$, the one-sided growth
rate can be chosen so that there are constants
$\Lambda_*\geq0$ and $L_*>0$, independent of the joint small-parameter
family, with
\begin{equation}
\ell_\eta(t)\geq-L_*,
\qquad
\sup_{0\leq t\leq T_\eta}
\int_0^t\ell_\eta(r)\,dr
\leq\Lambda_*.
\label{eq:bounded-accumulated-growth}
\end{equation}
Then, whenever $\eta L_*<\kappa$,
\begin{equation}
\mathcal A_\eta(T_\eta)
\leq
\frac{e^{\Lambda_*}}{\kappa-\eta L_*}.
\label{eq:explicit-amplification-bound}
\end{equation}
Consequently, if
\(\frac{\lambda L_XK e^{\Lambda_*}}{\kappa-\eta L_*}\,\eta<r_\eta\),
then
\begin{equation}
\sup_{0\leq t\leq T_\eta}
\|z^\eta(t)-\bar z(t)\|_2
\leq
\frac{\lambda L_XK e^{\Lambda_*}}
     {\kappa-\eta L_*}\,\eta.
\label{eq:explicit-long-time-reduction}
\end{equation}
\end{corollary}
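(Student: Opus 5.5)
The plan is to bound the inner integral in the definition \eqref{eq:weighted-amplification} of $\mathcal A_\eta(T_\eta)$ directly, using the two conditions in \eqref{eq:bounded-accumulated-growth}, and then to invoke Theorem~\ref{thm:quasi-steady-validity}\textup{(ii)} for the trajectory estimate.

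Fix $0\le s\le t\le T_\eta$ and split the accumulated growth as
\[
\int_s^t\ell_\eta(r)\,dr=\int_0^t\ell_\eta(r)\,dr-\int_0^s\ell_\eta(r)\,dr .
\]
The first term is at most $\Lambda_*$ by the supremum bound. For the second term we use the lower bound $\ell_\eta\ge -L_*$, which gives $-\int_0^s\ell_\eta(r)\,dr\le L_*s$. Together these yield
\[
\exp\left(\int_s^t\ell_\eta(r)\,dr\right)\le e^{\Lambda_*}e^{L_*s}.
\]
The point of the splitting is that the potentially large part is absorbed into a factor growing like $e^{L_*s}$ in the integration variable $s$. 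That factor is then killed by the weight $e^{-\kappa s/\eta}$ as soon as $\eta L_*<\kappa$.

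Substituting this bound gives
\[
\frac1\eta\int_0^t \exp\left(\int_s^t\ell_\eta(r)\,dr\right)e^{-\kappa s/\eta}\,ds
\le \frac{e^{\Lambda_*}}{\eta}\int_0^\infty e^{-(\kappa/\eta-L_*)s}\,ds
=\frac{e^{\Lambda_*}}{\eta}\cdot\frac{\eta}{\kappa-\eta L_*}.
\]
The right-hand side is independent of $t$, so taking the supremum over $t\in[0,T_\eta]$ gives \eqref{eq:explicit-amplification-bound}.

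For the second claim, note that $Q_\eta=\lambda L_XK\,\eta\,\mathcal A_\eta(T_\eta)$ is bounded by $\lambda L_XK e^{\Lambda_*}\eta/(\kappa-\eta L_*)$. The assumed smallness condition therefore gives $Q_\eta<r_\eta$. Theorem~\ref{thm:quasi-steady-validity}\textup{(ii)} then applies and yields \eqref{eq:long-time-reduction}. Inserting \eqref{eq:explicit-amplification-bound} into \eqref{eq:long-time-reduction} gives \eqref{eq:explicit-long-time-reduction}.

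The only delicate step is the splitting of $\int_s^t\ell_\eta$. A naive bound $\int_s^t\ell_\eta\le\Lambda_*$ is not available, because the hypothesis controls only integrals starting at $0$. The lower bound on $\ell_\eta$ is exactly what lets one pass from integrals over $[0,t]$ to integrals over $[s,t]$, at the cost of the harmless factor $e^{L_*s}$. Everything else is routine.
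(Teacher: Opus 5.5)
Your proposal is correct and follows the paper's proof. It uses the same splitting $\int_s^t\ell_\eta=\int_0^t\ell_\eta-\int_0^s\ell_\eta\le\Lambda_*+L_*s$, the same extension of the $s$-integral to $[0,\infty)$ to obtain $e^{\Lambda_*}/(\kappa-\eta L_*)$, and the same appeal to Theorem~\ref{thm:quasi-steady-validity}\textup{(ii)}. Your explicit check that the smallness hypothesis gives $Q_\eta<r_\eta$ is a detail the paper leaves implicit.
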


\begin{proof}
For $0\leq s\leq t\leq T_\eta$,
\[
\int_s^t\ell_\eta(r)\,dr
=
\int_0^t\ell_\eta(r)\,dr
-
\int_0^s\ell_\eta(r)\,dr
\leq
\Lambda_*+L_*s.
\]
Using this estimate in
\eqref{eq:weighted-amplification} gives
\[
\mathcal A_\eta(T_\eta)
\leq
\frac{e^{\Lambda_*}}{\eta}
\int_0^\infty
e^{-(\kappa/\eta-L_*)s}\,ds
=
\frac{e^{\Lambda_*}}{\kappa-\eta L_*}.
\]
The trajectory estimate follows from
Theorem~\ref{thm:quasi-steady-validity}\textup{(ii)}.
\end{proof}

Under \eqref{eq:bounded-accumulated-growth}, the reduction error
is $O(\eta)$ even on a diverging comparison interval. This general
criterion is not invoked for the transverse fold of the concrete
sigmoidal model: Subsection~\ref{sec:directional-transfer} shows why
an all-direction Euclidean growth estimate is too strong there and
uses a directional occupancy-forcing estimate instead.

\section{Reduced activity dynamics and ghost phenomena}
\label{sec:ghost}
Having reduced the fast occupancy layer to the quasi-steady input
$X_{\mathrm{qs}}(C)$, we now analyze the downstream activity--adaptation
system. The applied inhibitor concentration $C$ does not act directly on
the activity variable. Rather, it first modifies the quasi-steady receptor
occupancy $X_{\mathrm{qs}}(C)$.

To make the direction of this effect explicit, we introduce an effective
inhibition variable
\begin{equation}
I(C)=H\bigl(X_{\mathrm{qs}}(C)\bigr),
\label{eq:IofC}
\end{equation}
where $H$ extracts the inhibitory component of the quasi-steady occupancy
state. We assume that increasing inhibitor concentration increases effective
inhibition, so that
\begin{equation}
I'(C)>0.
\label{eq:Imonotone}
\end{equation}

The reduced activity equation may therefore be written as
\begin{equation}
\dot A
=
F(A,B;I)
:=
\lambda\Big(\widetilde{\Phi}(I,A,B)-A\Big),
\label{eq:FABI}
\end{equation}
where $\widetilde{\Phi}$ is the activity map expressed in terms of the
effective inhibition variable. We assume that increasing effective
inhibition suppresses signalling activity,
\begin{equation}
\partial_I F(A,B;I)<0.
\label{eq:FImonotone}
\end{equation}
Thus increasing $C$ increases $I(C)$ and shifts the activity vector field in
the inhibitory direction.

The adaptive variable \(B\) does not modify receptor occupancy directly.
Instead, \(B\) reshapes the signalling landscape through the activity map
\(\widetilde{\Phi}\). Thus the treatment concentration \(C\) enters the
reduced activity--adaptation system in two ways: through the
occupancy-induced inhibition \(I(C)=H\bigl(X_{\mathrm{qs}}(C)\bigr)\)
in the activity equation, and through the adaptive induction law
\(\Psi(A,C)\) in the adaptation equation.

A saddle-node point occurs at $(A_c,B_c,I_c)$ if
\begin{equation}
F(A_c,B_c;I_c)=0,\qquad
\partial_A F(A_c,B_c;I_c)=0,\qquad
\partial_{AA}F(A_c,B_c;I_c)\neq 0.
\label{eq:foldconds}
\end{equation}
and the unfolding with respect to \((B,I)\) is nondegenerate.
Near the fold, the unfolding parameter may be written locally as $\mu
=
\alpha(B-B_c)-\beta(I-I_c)
+\text{higher-order terms},
\qquad \beta>0.$
Hence increasing inhibition decreases $\mu$ and drives the system through
the saddle-node threshold.

\begin{proposition}[Local saddle-node reduction] \label{prop:local-saddle-node}
Assume $F\in C^3$ near $(A_c,B_c;I_c)$ and that \eqref{eq:foldconds} holds together with
\((\partial_BF,\partial_IF)(A_c,B_c;I_c)\neq (0,0)\).
Then, after the smooth shift \(a=A-A_c\), \(\mu=\mu(B,I)\),
and rescaling of $a$ and time, the reduced equation is locally equivalent to
\begin{equation}
\dot a = \mu-a^2+R(\mu,a),\qquad R(\mu,a)=O(|\mu||a|+|a|^3+|\mu|^2).
\label{eq:normalform}
\end{equation}
\end{proposition}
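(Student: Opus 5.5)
The plan is a Taylor expansion of $F$ about the fold point $(A_c,B_c;I_c)$, followed by an affine change of the unfolding parameter and a rescaling of $a$ and time. Write $a=A-A_c$, $b=B-B_c$, $\iota=I-I_c$. Since $F\in C^3$, Taylor's theorem with remainder gives
\[
F(A,B;I)=F_B\,b+F_I\,\iota+\tfrac12F_{AA}\,a^2+F_{AB}\,ab+F_{AI}\,a\iota+O(|a|^3+|a|^2(|b|+|\iota|)+(|b|+|\iota|)^2),
\]
with all derivatives evaluated at the fold. The constant term and the term linear in $a$ are absent by \eqref{eq:foldconds}.

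Next I define the unfolding parameter. The nondegeneracy hypothesis $(\partial_BF,\partial_IF)\neq(0,0)$ lets me set
\[
\nu(B,I):=F_B\,b+F_I\,\iota,
\]
which is a nonzero linear form. This matches the form $\alpha(B-B_c)-\beta(I-I_c)$ with $\beta=-F_I>0$ by \eqref{eq:FImonotone}, up to the normalising constant fixed below. Since $|\nu|$ does not control $|b|+|\iota|$ in general, I will treat $(\nu,\zeta)$ as local coordinates on the $(B,I)$-plane, where $\zeta$ is a complementary linear coordinate. The mixed terms $F_{AB}ab+F_{AI}a\iota$ and the quadratic parameter terms are then $O(|a|(|\nu|+|\zeta|)+(|\nu|+|\zeta|)^2)$.

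\textbf{Main obstacle.} The stated remainder $R=O(|\mu||a|+|a|^3+|\mu|^2)$ contains only $\mu$, so the statement is a one-parameter statement. For example, $B$ may move along a path with $\zeta$ fixed or expressed as a function of $\mu$, or $\mu$ may be read as the full local coordinate. I would make this explicit: restrict to a smooth transverse curve through the fold, such as varying $I$ (or $B$) alone with the other held fixed. On such a curve $|b|+|\iota|=O(|\mu|)$, and every parameter-dependent term is then $O(|\mu||a|+|\mu|^2)$. If one prefers, the linear-in-$a$ parameter term can be removed by the shift $a\mapsto a-c\mu$, which changes the remainder only within the same order class. I would phrase the proposition's ``smooth shift $\mu=\mu(B,I)$'' as this choice of transverse coordinate.

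Finally, I rescale. Set $k=-\tfrac12F_{AA}\neq0$. Replacing $a$ by $-a$ if necessary so that the quadratic coefficient is negative, take $a=\tilde a$, time $\tilde t=|k|t$, and $\mu=\nu/|k|$ with the appropriate sign. This gives
\[
\frac{d\tilde a}{d\tilde t}=\mu-\tilde a^2+R,
\]
and scalar rescalings preserve the order class of $R$. Time and coordinate rescalings are orbital equivalences, which justifies calling the result ``locally equivalent''. The $C^3$ hypothesis is exactly what makes the cubic remainder in $a$ legitimate.
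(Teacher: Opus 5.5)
Your route (Taylor expansion at the fold, reparametrisation of $(B,I)$, then scaling of $a$ and time) is the one the paper's one-line proof has in mind. For a single transverse one-parameter family your argument is correct, including the reflection $a\mapsto-a$, which keeps the time direction. Your diagnosis is also right. With the constant shift $a=A-A_c$ and constant rescalings, the coefficient $\partial_AF(A_c,B,I)=F_{AB}b+F_{AI}\iota+\cdots$ of the term linear in $a$ generically cannot be made $O(|\mu|)$ on a two-dimensional parameter neighbourhood, so the stated remainder cannot hold there.

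The gap is in the remedy. Restricting to one curve through $(B_c,I_c)$ proves strictly less than the proposition, which asserts a normal form with $\mu=\mu(B,I)$. It also proves less than the paper uses next. The local phase-portrait proposition following Proposition~\ref{prop:fold-curve} needs one function $\mu(B,C)$ on a full neighbourhood $\mathcal V$ with $\mathcal F\cap\mathcal V=\{\mu=0\}$, and the equilibrium count at every point of $\mathcal V$. Your linear $\nu$ has a zero set that is only tangent to the fold curve, and your construction says nothing at parameter points off the chosen curve.

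The missing idea is to let the shift depend on the parameters.
\begin{itemize}
\item Since $\partial_{AA}F\neq0$, the implicit function theorem applied to $\partial_AF=0$ gives a $C^2$ map $A^*(B,I)$ with $A^*(B_c,I_c)=A_c$ and $\partial_AF(A^*(B,I),B,I)=0$.
\item Put $a=A-A^*(B,I)$, $\nu(B,I)=F(A^*(B,I),B,I)$ and $k(B,I)=\tfrac12\partial_{AA}F(A^*(B,I),B,I)$. Taylor's theorem in $a$ alone then gives $F=\nu+k\,a^2+O(|a|^3)$ uniformly in $(B,I)$, with no term linear in $a$.
\item Because $\partial_AF$ vanishes at $A^*$, the chain rule gives $\nabla\nu(B_c,I_c)=(F_B,F_I)\neq(0,0)$, so $\nu$ is a genuine unfolding parameter. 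By uniqueness in the implicit function theorem, $\{\nu=0\}$ is exactly the projected fold set.
\item With the parameters frozen, rescale time by $|k(B,I)|>0$, reflect $a$ if $k>0$, and set $\mu=-\nu/k$. This gives $\dot a=\mu-a^2+O(|a|^3)$ on a whole neighbourhood of $(B_c,I_c)$, which is stronger than the stated bound.
\end{itemize}
This parameter-dependent transformation is also what Section~\ref{sec:coupled-fold} presupposes when it attributes the $O(\varepsilon)$ term in $\mathcal R$ to the normal-form coordinates depending on the slowly varying parameter.

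A minor point: your identification $\beta=-F_I>0$ survives the normalisation only when $F_{AA}<0$. When $F_{AA}>0$, the reflection makes $\mu=-\nu/k$ opposite in sign to $\nu$.
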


\begin{proof}
This is the standard Taylor expansion argument for a generic saddle-node bifurcation \cite{Kuznetsov2004}. The vanishing
of the constant and linear terms in $a$ follows from the equilibrium and fold conditions, while
nonzero $\partial_{AA}F$ and nonzero unfolding derivatives ensure a quadratic fold. A smooth change of
coordinates and rescaling brings the vector field to the stated form.
\end{proof}

Ignoring higher-order terms, the leading reduced dynamics are
\begin{equation}
\dot a = \mu-a^2.
\label{eq:snnf}
\end{equation}
For $\mu>0$, there are two equilibria, one stable and one unstable. At $\mu=0$ they collide. For
$\mu<0$, no equilibria remain, yet trajectories pass slowly through the remnant of the vanished
high-activity state.

Writing $\mu=-\delta$ with $\delta>0$ small gives
\begin{equation}
\dot a = -(a^2+\delta).
\label{eq:ghost}
\end{equation}

\begin{proposition}[Ghost residence time] \label{prop:ghost-residence-time}
Fix $M>0$. Let $a(t)$ solve \eqref{eq:ghost} with $a(0)=M$, and let $T_\delta$ be the first time such
that $a(T_\delta)=-M$. Then
\[
T_\delta=\int_{-M}^{M}\frac{da}{a^2+\delta}=\frac{2}{\sqrt{\delta}}\arctan\!\left(\frac{M}{\sqrt{\delta}}\right),
\]
and hence
\[
T_\delta\sim \frac{\pi}{\sqrt{\delta}}\qquad\text{as }\delta\downarrow 0.
\]
\end{proposition}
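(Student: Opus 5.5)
The plan is to integrate \eqref{eq:ghost} directly by separation of variables, and then to take the limit of the resulting closed form. Since $\dot a=-(a^2+\delta)\le-\delta<0$, the solution starting at $a(0)=M$ is strictly decreasing. It cannot blow up before reaching $-M$, because the right-hand side is bounded on $[-M,M]$. It also reaches $-M$ in finite time, at most $2M/\delta$. Hence $T_\delta$ is well defined, and $t\mapsto a(t)$ is a $C^1$ bijection from $[0,T_\delta]$ onto $[-M,M]$ whose inverse has derivative $dt/da=-1/(a^2+\delta)$.

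Integrating this inverse relation from $a=M$ down to $a=-M$ and reversing the orientation of the integral gives
\[
T_\delta=\int_{-M}^{M}\frac{da}{a^2+\delta}.
\]
With the substitution $a=\sqrt{\delta}\,u$ the integrand becomes $\delta^{-1/2}(1+u^2)^{-1}$, and the antiderivative is $\delta^{-1/2}\arctan(a/\sqrt\delta)$. The integrand is even, so the integral equals $\tfrac{2}{\sqrt\delta}\arctan(M/\sqrt\delta)$, which is the stated closed form.

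For the asymptotics, note that $M/\sqrt\delta\to\infty$ as $\delta\downarrow0$, so $\arctan(M/\sqrt\delta)\to\pi/2$. Therefore $\sqrt\delta\,T_\delta\to\pi$, that is, $T_\delta\sim\pi/\sqrt\delta$. For a sharper statement, I will use $\arctan x=\pi/2-\arctan(1/x)$, which yields
\[
T_\delta=\frac{\pi}{\sqrt\delta}-\frac{2}{\sqrt\delta}\arctan\!\bigl(\sqrt\delta/M\bigr)=\frac{\pi}{\sqrt\delta}-\frac{2}{M}+O(\sqrt\delta).
\]
This shows that the correction is bounded and independent of the leading singular term.

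There is no genuine obstacle in this argument. The only point that needs care is justifying the change of variables from $t$ to $a$, which requires strict monotonicity and the absence of finite-time blow-up; both follow from the sign and boundedness of the vector field on $[-M,M]$.
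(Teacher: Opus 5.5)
Your proposal is correct and follows the same route as the paper: separate variables, integrate $dt=-da/(a^2+\delta)$ from $a=M$ to $a=-M$, and use $\arctan(M/\sqrt{\delta})\to\pi/2$. Your added justification of monotonicity and finite hitting time, and your refined expansion, go slightly beyond the paper's one-line argument without changing anything essential; the remainder there is in fact $O(\delta)$, since $\arctan x = x+O(x^3)$.
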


\begin{proof}
Equation \eqref{eq:ghost} is separable. Integrating $dt=-da/(a^2+\delta)$ from $a=M$ to $a=-M$
yields the stated formula, and the asymptotic follows from $\arctan(M/\sqrt{\delta})\to \pi/2$.
\end{proof}

The inverse-square-root scaling is characteristic of the bottleneck
associated with a generic saddle-node bifurcation \cite{Kuehn2009}.
The key point is that increasing treatment can move the activity dynamics
past the saddle-node threshold while the activity state remains for a long
but finite time near the ghost of the vanished high-activity equilibrium.
This gives a concrete dynamical mechanism for delayed pathway shutdown and
transient resistance windows \cite{Stanoev2020}.

\section{Bifurcation structure of the activity subsystem}
\label{sec:bifurcation}
Because \(B\) evolves on the slow timescale \(O(\varepsilon^{-1})\), we first
study the activity layer with \(B\) treated as a frozen parameter. To express
this layer directly in terms of inhibitor concentration, define
\begin{equation}
\widehat F(A,B,C)
:=
F\bigl(A,B;I(C)\bigr)
=
\lambda\Bigl(\widetilde{\Phi}\bigl(I(C),A,B\bigr)-A\Bigr).
\label{eq:FABC}
\end{equation}
Thus the frozen activity equation is
\(\dot A=\widehat F(A,B,C)\). By \eqref{eq:Imonotone} and
\eqref{eq:FImonotone}, \(\partial_C\widehat F=\partial_I F\,I'(C)<0\).
Consequently, concentration is a nondegenerate unfolding parameter wherever
the remaining saddle-node conditions hold.

We now characterize the qualitative structure of the reduced activity dynamics
\eqref{eq:FABI} as parameters vary.

A saddle-node bifurcation of the frozen activity equation occurs at
\((A_c,B_c,C_c)\) when
\begin{equation}
\widehat F(A_c,B_c,C_c)=0,
\qquad
\partial_A\widehat F(A_c,B_c,C_c)=0,
\qquad
\partial_{AA}\widehat F(A_c,B_c,C_c)\neq0,
\label{eq:foldcondsC}
\end{equation}
and the parameter dependence is nondegenerate:
\begin{equation}
\bigl(
\partial_B\widehat F,
\partial_C\widehat F
\bigr)(A_c,B_c,C_c)\neq(0,0).
\label{eq:foldtransversality}
\end{equation}

\begin{proposition}[Existence of a fold curve] \label{prop:fold-curve}
Let \((A_c,B_c,C_c)\) satisfy
\begin{equation}
\widehat F(A_c,B_c,C_c)=0,
\qquad
\partial_A\widehat F(A_c,B_c,C_c)=0.
\label{eq:foldcondsC2}
\end{equation}
Assume that \(\widehat F\in C^2\) near \((A_c,B_c,C_c)\) and that
\begin{equation}
\partial_{AA}\widehat F(A_c,B_c,C_c)\neq0,
\qquad
\bigl(
\partial_B\widehat F,
\partial_C\widehat F
\bigr)(A_c,B_c,C_c)\neq(0,0).
\label{eq:foldtransversality2}
\end{equation}
Then the set
\[
\widetilde{\mathcal F}
=
\left\{
(A,B,C):
\widehat F(A,B,C)=0,\ 
\partial_A\widehat F(A,B,C)=0
\right\}
\]
is locally a one-dimensional \(C^1\) manifold. Its projection onto the
\((B,C)\)-plane is a \(C^1\) fold curve \(\mathcal F\). In particular, because $\partial_C\widehat F
=
\partial_I F\,I'(C)<0$, the
transversality condition holds under
\eqref{eq:Imonotone}--\eqref{eq:FImonotone}, and the fold can locally be
written as \(A=A_{\mathrm f}(B)\), \(C=C_{\mathrm f}(B)\).
\end{proposition}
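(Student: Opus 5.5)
The plan is to apply the implicit function theorem to the map
\[
\mathcal G(A,B,C)=\bigl(\widehat F(A,B,C),\ \partial_A\widehat F(A,B,C)\bigr),
\]
which sends a neighbourhood of \((A_c,B_c,C_c)\) in \(\mathbb R^3\) to \(\mathbb R^2\). One caveat on regularity: \(\partial_A\widehat F\) is only \(C^1\) when \(\widehat F\in C^2\), so the implicit function theorem produces a \(C^1\) manifold and no more. That matches the \(C^1\) regularity claimed in the statement.

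\textbf{Step 1: rank of the Jacobian.} By \eqref{eq:foldcondsC2}, \(\mathcal G(A_c,B_c,C_c)=0\). At this point the Jacobian of \(\mathcal G\) is
\[
D\mathcal G=\begin{pmatrix}
\partial_A\widehat F & \partial_B\widehat F & \partial_C\widehat F\\
\partial_{AA}\widehat F & \partial_{AB}\widehat F & \partial_{AC}\widehat F
\end{pmatrix}
=\begin{pmatrix}
0 & \partial_B\widehat F & \partial_C\widehat F\\
\partial_{AA}\widehat F & \partial_{AB}\widehat F & \partial_{AC}\widehat F
\end{pmatrix}.
\]
Take the \(2\times2\) minor formed by the \(A\)-column together with whichever parameter column is nonzero. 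If \(\partial_C\widehat F\neq0\), this minor has determinant \(-\partial_{AA}\widehat F\,\partial_C\widehat F\neq0\) by \eqref{eq:foldtransversality2}; the \(B\)-column case is the same. So \(D\mathcal G\) has rank \(2\), and the regular-value form of the implicit function theorem shows that \(\widetilde{\mathcal F}=\mathcal G^{-1}(0)\) is locally a one-dimensional \(C^1\) manifold.

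\textbf{Step 2: the case \(\partial_C\widehat F\neq0\).} In the situation singled out in the statement, solve for \((A,C)\) as \(C^1\) functions of \(B\): \(A=A_{\mathrm f}(B)\), \(C=C_{\mathrm f}(B)\). This parametrizes \(\widetilde{\mathcal F}\) by \(B\). Its projection onto the \((B,C)\)-plane is then the graph \(\{(B,C_{\mathrm f}(B))\}\), which is automatically a \(C^1\) curve.

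\textbf{Step 3: the general transversality case.} If only \(\partial_B\widehat F\neq0\), use the \((A,B)\) minor instead and parametrize by \(C\), getting \(A=A(C)\), \(B=B(C)\). The projection is again a graph, now over \(C\). In both cases the projection is injective, so no immersion issue arises.

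\textbf{Step 4: transversality is automatic here.} The chain rule gives \(\partial_C\widehat F=\partial_IF\,I'(C)\). By \eqref{eq:Imonotone}--\eqref{eq:FImonotone} this quantity is strictly negative, so the second condition in \eqref{eq:foldtransversality2} always holds, and Step 2 applies.

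\textbf{Main obstacle.} Step 1 is the only substantive point. One must check that the zero in the \((1,1)\) entry, which comes from the fold condition, does not destroy the rank, and the right choice of minor handles this. Beyond that, the proof only needs care that the \(C^2\) hypothesis is exactly enough for the implicit function theorem applied to \(\partial_A\widehat F\).
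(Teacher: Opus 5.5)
Your proposal is correct and follows the paper's proof essentially step for step: the same map $\mathcal G=(\widehat F,\partial_A\widehat F)$, the same rank-two argument via the minor with determinant $-\partial_{AA}\widehat F\,\partial_C\widehat F$, the regular-level-set theorem, and the same case split solving for $(A,C)$ as functions of $B$, or for $(A,B)$ as functions of $C$. Your extra remarks, that $C^2$ regularity yields exactly a $C^1$ manifold and that the projection onto the $(B,C)$-plane is a graph, correctly make explicit what the paper leaves implicit.
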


\begin{proof}
Define
\[
\mathcal G(A,B,C)
=
\begin{pmatrix}
\widehat F(A,B,C)\\
\partial_A\widehat F(A,B,C)
\end{pmatrix}.
\]
At a fold point,
\[
D\mathcal G
=
\begin{pmatrix}
0 &
\partial_B\widehat F &
\partial_C\widehat F\\
\partial_{AA}\widehat F &
\partial_{AB}\widehat F &
\partial_{AC}\widehat F
\end{pmatrix}.
\]
Conditions \eqref{eq:foldtransversality2} imply that this matrix has rank
two. The regular-level-set theorem therefore shows that
\(\mathcal G^{-1}(0)\) is locally a one-dimensional \(C^1\) manifold.

If \(\partial_C\widehat F\neq0\), the Jacobian of \(\mathcal G\) with
respect to \((A,C)\) has determinant
\(-\partial_C\widehat F\,\partial_{AA}\widehat F\neq0\).
The implicit function theorem consequently gives
\(A=A_{\mathrm f}(B)\) and \(C=C_{\mathrm f}(B)\). If instead
\(\partial_B\widehat F\neq0\), the analogous argument solves for
\((A,B)\) as functions of \(C\).
\end{proof}

\begin{proposition}[Local phase portrait near the fold]
Under the hypotheses of Proposition~\ref{prop:fold-curve}, there are neighbourhoods
\(U\) of \(A_c\) and \(\mathcal V\) of \((B_c,C_c)\), together with a
smooth signed unfolding parameter \(\mu=\mu(B,C)\), such that
\(\mathcal F\cap\mathcal V=\{(B,C)\in\mathcal V:\mu(B,C)=0\}\).
After choosing the sign of \(\mu\), the equilibria of
\(\dot A=\widehat F(A,B,C)\)
in \(U\) have the following structure:
\begin{enumerate}[label=(\roman*)]
\item if \(\mu(B,C)>0\), there are exactly two simple equilibria in
      \(U\), one unstable and one asymptotically stable;
\item if \(\mu(B,C)=0\), these equilibria coalesce in one nonhyperbolic
      saddle-node equilibrium;
\item if \(\mu(B,C)<0\), there is no equilibrium in \(U\).
\end{enumerate}
Crossing the fold therefore changes the number of equilibria in \(U\) by
two. No conclusion about equilibria outside \(U\) follows from the local
saddle-node hypotheses alone.
\end{proposition}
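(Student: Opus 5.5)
The plan is to reduce the equilibrium count in \(U\) to a one-variable statement about the critical points of \(\widehat F(\cdot,B,C)\). We may assume \(\partial_{AA}\widehat F(A_c,B_c,C_c)<0\); the other sign is handled by reversing the sign convention below. By continuity of \(\partial_{AA}\widehat F\), there are a closed interval \(U=[A_c-r,A_c+r]\) and a neighbourhood \(\mathcal V_0\) of \((B_c,C_c)\) on which \(\partial_{AA}\widehat F\le -m<0\). Then \(\partial_A\widehat F\) is strictly decreasing in \(A\) on \(U\), and \(\widehat F(\cdot,B,C)\) is strictly concave there.

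The implicit function theorem applies to \(\partial_A\widehat F(A,B,C)=0\) because \(\partial_{AA}\widehat F\neq0\). It gives a unique \(C^1\) critical point \(A_m(B,C)\in U\) for \((B,C)\) in a possibly smaller neighbourhood \(\mathcal V\), with \(A_m(B_c,C_c)=A_c\). This point is the strict maximum of \(\widehat F(\cdot,B,C)\) on \(U\). Define
\[
\mu(B,C):=\widehat F\bigl(A_m(B,C),B,C\bigr).
\]
By the envelope identity, \(\nabla_{(B,C)}\mu=(\partial_B\widehat F,\partial_C\widehat F)\) evaluated at \(A_m\). This gradient is nonzero by \eqref{eq:foldtransversality2}, and in particular \(\partial_C\mu<0\). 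Hence \(\{\mu=0\}\) is a \(C^1\) curve, and it coincides with \(\mathcal F\cap\mathcal V\): a point lies on the fold exactly when the critical point \(A_m\) is also a zero. This matches Proposition~\ref{prop:fold-curve}. The regularity of \(\mu\) is only \(C^1\) under the \(C^2\) hypothesis, and I read ``smooth'' in that sense.

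Next I count zeros using strict concavity on \(U\). Shrink \(\mathcal V\) so that \(\widehat F(A_c\pm r,B,C)<0\) throughout \(\mathcal V\). This holds at \((B_c,C_c)\) because \(\widehat F(A_c,B_c,C_c)=0\) is the strict maximum there, and it persists by continuity.
\begin{enumerate}[label=(\roman*)]
\item If \(\mu>0\), the maximum value is positive while the endpoint values are negative. The intermediate value theorem together with concavity then gives exactly two zeros, \(A_-<A_m<A_+\). Since \(\partial_A\widehat F\) is strictly decreasing and vanishes only at \(A_m\), we get \(\partial_A\widehat F(A_-)>0\) and \(\partial_A\widehat F(A_+)<0\). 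So both equilibria are simple, \(A_-\) is unstable, and \(A_+\) is asymptotically stable, as the linearization of the scalar equation shows.
\item If \(\mu=0\), the only zero is \(A_m\), where \(\partial_A\widehat F=0\). This is a nonhyperbolic point, and the two branches \(A_\pm\) converge to it as \(\mu\downarrow0\).
\item If \(\mu<0\), we have \(\widehat F<0\) on all of \(U\), so there is no equilibrium.
\end{enumerate}
For the opposite sign of \(\partial_{AA}\widehat F\), take \(\mu=-\widehat F(A_m,B,C)\); the same count holds with the stability labels interchanged. The final remarks of the statement follow directly: the count jumps by two across \(\mu=0\), and nothing about equilibria outside \(U\) follows.

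I expect the main obstacle to be choosing \(U\) and \(\mathcal V\) in the right order. \(U\) must be fixed first so that concavity holds uniformly on it. Only afterwards is \(\mathcal V\) shrunk so that both the endpoint negativity and \(A_m\in\operatorname{int}U\) hold. If this order is reversed, zeros could enter or leave through the boundary of \(U\) and spoil the exact count. A secondary point to check is that the relation \(\partial_C\mu<0\) fixes a consistent orientation: increasing \(C\) decreases \(\mu\) and passes from two equilibria to none.
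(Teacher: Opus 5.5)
Your proof is correct, but it takes a different route from the paper. The paper's proof is a two-line citation. It takes a signed defining function for the fold curve from Proposition~\ref{prop:fold-curve} and reads the equilibrium structure off the normal form $\dot a=\mu-a^2+R$ of Proposition~\ref{prop:local-saddle-node}. You instead construct the unfolding parameter explicitly as the critical value $\mu(B,C)=\widehat F(A_m(B,C),B,C)$. You then count zeros directly by uniform concavity on a fixed interval $U$, shrinking $\mathcal V$ only afterwards so that $\widehat F<0$ at the endpoints $A_c\pm r$.

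Your route buys several things:
\begin{itemize}
\item It runs at the $C^2$ level actually assumed in Proposition~\ref{prop:fold-curve}, whereas the cited normal-form result requires $F\in C^3$.
\item Your $\mu$ is a defining function whose sign decides the number of zeros by construction, and whose gradient is exactly the transversality vector $(\partial_B\widehat F,\partial_C\widehat F)$. A generic defining function such as $C_{\mathrm f}(B)-C$ would need a separate argument linking its sign to the count.
\item Centering at the moving critical point $A_m(B,C)$ is the parameter-dependent shift that a two-parameter unfolding needs. With the fixed shift $a=A-A_c$, a term of order $|a|\,(|B-B_c|+|C-C_c|)$ generically survives along the fold curve, where $\mu=0$, so it is not $O(|\mu||a|)$.
\item Counting zeros of $\mu-a^2+R$, rather than of the truncated $\mu-a^2$, requires essentially your concavity-plus-endpoint argument anyway. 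The paper leaves this step implicit.
\end{itemize}
What the paper's route buys is direct continuity with the ghost and Airy analyses, which are phrased in the normalized parameter. Nothing is lost on your side: with $k=-\tfrac{1}{2}\partial_{AA}\widehat F(A_m,B,C)$ and $a=k(A-A_m)$, your construction gives $\dot a=k\mu-a^2+O(|a|^3)$ under $C^3$.

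Three minor points:
\begin{itemize}
\item The envelope identity shows that $\nabla\mu$ is itself $C^1$, so $\mu$ is $C^2$ rather than merely $C^1$.
\item The sign $\partial_C\mu<0$ does not follow from the nonvanishing of the gradient, as your ``in particular'' suggests. It comes from $\partial_C\widehat F=\partial_IF\,I'(C)<0$ and holds only in the concave case.
\item The coalescence claimed in (ii) follows in one line from $\widehat F(A,B,C)\le\mu(B,C)-\tfrac{m}{2}(A-A_m)^2$ on $U$, which gives $|A_\pm-A_m|\le\sqrt{2\mu/m}$.
\end{itemize}
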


\begin{proof}
Proposition~\ref{prop:fold-curve} supplies a smooth signed
defining function \(\mu(B,C)\) for the fold curve, while
Proposition~\ref{prop:local-saddle-node} gives the stated local
normal form and equilibrium structure.
\end{proof}

\begin{remark} \rm
The ghost dynamics described in \eqref{eq:ghost} arise when trajectories pass
near the fold curve in the regime where equilibria have annihilated. The distance
to the fold, measured by the unfolding parameter $\mu(B,C)$, determines the
timescale of transient memory.
\end{remark}

\begin{remark} \rm
This bifurcation structure provides a geometric interpretation of dose--response curves:
increasing inhibitor concentration $C$ changes the quasi-steady occupancy input
$X_{\mathrm{qs}}(C)$ and may move the reduced activity system across a fold curve. The
resulting transition from sustained signalling to suppressed signalling can exhibit long-lived
transients when trajectories pass near the ghost of a vanished high-activity equilibrium.
\end{remark}

\subsection{Explicit fold conditions for the sigmoidal prototype}
\label{sec:explicit-sigmoid-fold}

The general fold conditions can be evaluated explicitly for
\eqref{eq:Phi}.  With the net occupancy drive
\eqref{eq:net-occupancy-drive}, the frozen activity vector field is
\begin{equation}
\widehat F_{\mathrm{sig}}(A,B,C)
=
\lambda\left[
\sigma\bigl(h(C)+\gamma A-\rho B\bigr)-A
\right].
\label{eq:sigmoid-frozen-field}
\end{equation}

\begin{proposition}[Explicit high-activity fold]
\label{prop:explicit-sigmoid-fold}
Assume that $\lambda,\rho>0$, $\gamma>4$, and
$h\in C^3$ with $h'(C)<0$. Set
\begin{equation}
d=\sqrt{1-\frac4\gamma},
\qquad
A_{\mathrm f}=\frac{1+d}{2},
\qquad
q_{\mathrm f}
=
\log\left(\frac{A_{\mathrm f}}{1-A_{\mathrm f}}\right)
-\gamma A_{\mathrm f}.
\label{eq:explicit-fold-quantities}
\end{equation}
For every $C$ such that
\begin{equation}
B_{\mathrm f}(C)
=
\frac{h(C)-q_{\mathrm f}}{\rho}
\in(0,1),
\label{eq:explicit-fold-curve}
\end{equation}
the point
$(A_{\mathrm f},B_{\mathrm f}(C),C)$ is a nondegenerate fold of
the frozen activity equation. More precisely,
\begin{align}
\partial_{AA}\widehat F_{\mathrm{sig}}
&=-\lambda\gamma d\neq0,
&
\partial_B\widehat F_{\mathrm{sig}}
&=-\frac{\lambda\rho}{\gamma}\neq0,
&
\partial_C\widehat F_{\mathrm{sig}}
&=\frac{\lambda h'(C)}{\gamma}\neq0
\label{eq:explicit-fold-derivatives}
\end{align}
at the fold. In addition,
\begin{equation}
B_{\mathrm f}'(C)=\frac{h'(C)}{\rho}<0.
\label{eq:explicit-fold-slope}
\end{equation}
The two equilibria near this fold exist on the side
$h(C)-\rho B-q_{\mathrm f}>0$; the equilibrium with
$A>A_{\mathrm f}$ is asymptotically stable in the frozen activity
equation.
\end{proposition}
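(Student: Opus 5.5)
Set $z=h(C)+\gamma A-\rho B$ and $s=\sigma(z)$, and use the standard identity $\sigma'=s(1-s)$. The proof is then a direct computation with the sigmoid: turn the two fold equations into conditions on $A$ alone, solve them explicitly, and read off the nondegeneracy and stability claims from derivatives of $\widehat F_{\mathrm{sig}}$.

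\textbf{Step 1 (fold equations).} We have
\[
\partial_A\widehat F_{\mathrm{sig}}=\lambda\bigl(\gamma s(1-s)-1\bigr).
\]
The equilibrium condition $\widehat F_{\mathrm{sig}}=0$ gives $s=A$. The fold condition $\partial_A\widehat F_{\mathrm{sig}}=0$ then becomes $A(1-A)=1/\gamma$. This quadratic has real roots exactly when $\gamma>4$, namely $A=(1\pm d)/2$. Taking the upper root gives $A_{\mathrm f}$.

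Inverting $\sigma(z)=A_{\mathrm f}$ gives $z=\log\bigl(A_{\mathrm f}/(1-A_{\mathrm f})\bigr)$. Rewriting this as $h(C)-\rho B=\log\bigl(A_{\mathrm f}/(1-A_{\mathrm f})\bigr)-\gamma A_{\mathrm f}=q_{\mathrm f}$ yields $B=B_{\mathrm f}(C)$. The requirement $B_{\mathrm f}(C)\in(0,1)$ only keeps the point inside the state square.

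\textbf{Step 2 (derivatives).} Compute
\[
\partial_{AA}\widehat F_{\mathrm{sig}}=\lambda\gamma^2 s(1-s)(1-2s).
\]
At the fold, $s(1-s)=1/\gamma$ and $1-2s=-d$, so $\partial_{AA}\widehat F_{\mathrm{sig}}=-\lambda\gamma d\neq0$.

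Similarly, $\partial_B\widehat F_{\mathrm{sig}}=-\lambda\rho s(1-s)=-\lambda\rho/\gamma$. Also $\partial_C\widehat F_{\mathrm{sig}}=\lambda h'(C)s(1-s)=\lambda h'(C)/\gamma$, which is nonzero because $h'(C)<0$.

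Proposition~\ref{prop:fold-curve} therefore applies, and the point is a nondegenerate fold. The slope $B_{\mathrm f}'(C)=h'(C)/\rho<0$ follows by differentiating the explicit formula for $B_{\mathrm f}$, or equivalently from the implicit-function quotient $-\partial_C\widehat F/\partial_B\widehat F$.

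\textbf{Step 3 (side and stability).} This step needs the most care. Write $p=h(C)-\rho B$. Equilibria are the roots of $g(A):=\sigma(p+\gamma A)-A=0$, so I would work with this one-parameter family.

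Near $A_{\mathrm f}$, Taylor expansion gives
\[
g(A)\approx \frac{1}{\gamma}(p-q_{\mathrm f})-\frac{\gamma d}{2}(A-A_{\mathrm f})^2 .
\]
Hence two nearby roots exist exactly when $p-q_{\mathrm f}>0$, at $A-A_{\mathrm f}\approx\pm\sqrt{2(p-q_{\mathrm f})/(\gamma^2 d)}$. This matches the normal form of Proposition~\ref{prop:local-saddle-node} with $\mu\propto p-q_{\mathrm f}$.

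For stability, $g'(A)\approx-\gamma d(A-A_{\mathrm f})$. This is negative at the root with $A>A_{\mathrm f}$, so that equilibrium is asymptotically stable, and positive at the other root, which is unstable. The sign bookkeeping depends on $\partial_{AA}\widehat F_{\mathrm{sig}}<0$ and $\partial_p g=s(1-s)>0$. Once these are checked, the remaining claims follow.
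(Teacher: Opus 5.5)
Your proposal is correct and follows the paper's proof essentially step for step. Both arguments reduce the fold to $\gamma A(1-A)=1$ using $\sigma'=A(1-A)$ at equilibrium, evaluate the derivatives with $A_{\mathrm f}(1-A_{\mathrm f})=1/\gamma$ and $1-2A_{\mathrm f}=-d$, and read off the two nearby roots and the stability of the upper root from a Taylor expansion in $\Delta=h(C)-\rho B-q_{\mathrm f}$. One small wording point: $A(1-A)=1/\gamma$ already has a (double) real root at $\gamma=4$, so the precise statement is that it has two \emph{distinct} roots exactly when $\gamma>4$.
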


\begin{proof}
At a frozen equilibrium, write
$z=h(C)+\gamma A-\rho B$. Since $A=\sigma(z)$, one has
$\sigma'(z)=A(1-A)$ and therefore
\(\partial_A\widehat F_{\mathrm{sig}}=
\lambda\bigl[\gamma A(1-A)-1\bigr]\).
The fold equation is consequently
$\gamma A(1-A)=1$. It has two distinct solutions precisely when
$\gamma>4$, and the high-activity solution is
$A_{\mathrm f}=(1+d)/2$. The equilibrium equation then gives
\eqref{eq:explicit-fold-curve}.

Using
$A_{\mathrm f}(1-A_{\mathrm f})=1/\gamma$ and
$1-2A_{\mathrm f}=-d$, direct differentiation gives
\eqref{eq:explicit-fold-derivatives}. Thus both the quadratic fold
condition and concentration transversality hold. Formula
\eqref{eq:explicit-fold-slope} follows by differentiating
\eqref{eq:explicit-fold-curve}.

Finally, set
\(\Delta=h(C)-\rho B-q_{\mathrm f}\). Taylor expansion of the
equilibrium equation about the fold shows that two nearby roots exist
for $\Delta>0$. On the upper root,
$\partial_A\widehat F_{\mathrm{sig}}<0$, which proves frozen
asymptotic stability.
\end{proof}

\begin{remark}[A nonempty parameter regime]
\label{rem:explicit-parameter-example}
For example, take
\(\gamma=\rho=6\), \(\lambda=\kappa_A=1\), \(\kappa_C=2\), and
\(h(C)=1.184907-2C/(1+C)\).
At $C=1$, formulas \eqref{eq:explicit-fold-quantities}--
\eqref{eq:explicit-fold-curve} give
\(A_{\mathrm f}\approx0.788675\) and
\(B_{\mathrm f}\approx0.600000\). The transverse adaptive
condition derived in Proposition~\ref{prop:explicit-transverse-fold}
below is also satisfied for this parameter set.
\end{remark}

\section{From the frozen fold to the coupled fast--slow dynamics}
\label{sec:coupled-fold}

The preceding analysis treats \(B\) as a frozen parameter and characterizes
the fold geometry of the activity equation $\dot A=\widehat F(A,B,C)$.
This approximation is appropriate on the fast activity timescale because
\(B\) evolves on the slower timescale \(O(\varepsilon^{-1})\). In the
singular limit \(\varepsilon=0\), the set
\(\mathcal C_0=\bigl\{(A,B,C):\widehat F(A,B,C)=0\bigr\}\)
is the critical manifold of the activity--adaptation system, and the fold
curve \(\mathcal F\) identified in the preceding section is the subset on
which \(\partial_A\widehat F(A,B,C)=0\).

For \(0<\varepsilon\ll1\), however, \(B\) is not constant. The coupled
reduced system is
\begin{align}
\dot A
&=
\widehat F(A,B,C),\\
\dot B
&=
\varepsilon\bigl(\Psi(A,C)-B\bigr).
\end{align}
Consequently, the signed unfolding parameter \(\mu(B,C)\) varies along a
trajectory as \(B\) adapts. The frozen-\(B\) ghost estimate must therefore
be related to the slow drift of \(B\) before it can be used to describe the
coupled dynamics.

Two distinct questions arise. First, a fold of the frozen activity
subsystem is not necessarily a saddle-node equilibrium of the coupled
activity--adaptation system, because a coupled equilibrium must also
satisfy \(B=\Psi(A,C)\). Second, even when the coupled system is not at an
equilibrium saddle-node, a trajectory may pass slowly through a
neighbourhood of the frozen fold as \(B\) evolves. We examine these two
issues separately below.

\subsection{Equilibria of the coupled activity--adaptation system}

We first distinguish a fold of the frozen activity equation from a
saddle-node equilibrium of the coupled system. For fixed \(C\) and
\(\varepsilon>0\), an equilibrium \((A^*,B^*)\) of
\(\dot A=\widehat F(A,B,C)\),
\(\dot B=\varepsilon\bigl(\Psi(A,C)-B\bigr)\) must satisfy
\(\widehat F(A^*,B^*,C)=0\) and \(B^*=\Psi(A^*,C)\).
It is therefore convenient to define the scalar equilibrium residual
\begin{equation}
\mathcal E(A,C)
=
\widehat F\bigl(A,\Psi(A,C),C\bigr).
\label{eq:coupled-equilibrium-residual}
\end{equation}
The coupled equilibria are precisely the solutions of
\(\mathcal E(A,C)=0\) and \(B=\Psi(A,C)\).

Differentiating \eqref{eq:coupled-equilibrium-residual} with respect to
\(A\) gives
\begin{equation}
\partial_A\mathcal E
=
\partial_A\widehat F
+
\partial_B\widehat F\,\partial_A\Psi,
\label{eq:coupled-fold-derivative}
\end{equation}
where all derivatives on the right-hand side are evaluated at
\((A,\Psi(A,C),C)\). Similarly,
\(\partial_C\mathcal E=\partial_C\widehat F+
\partial_B\widehat F\,\partial_C\Psi\),
and
\[
\partial_{AA}\mathcal E
=
\partial_{AA}\widehat F
+
2\partial_{AB}\widehat F\,\partial_A\Psi
+
\partial_{BB}\widehat F\,(\partial_A\Psi)^2
+
\partial_B\widehat F\,\partial_{AA}\Psi.
\]
These identities show explicitly how adaptive feedback modifies the
location, transversality, and curvature of a fold of the equilibrium
branch.

\begin{proposition}[Saddle-node criterion for the coupled system]
Assume that \(\widehat F\) and \(\Psi\) are \(C^3\) near
\((A_c,B_c,C_c)\). Suppose that \(B_c=\Psi(A_c,C_c)\) and that
\begin{equation}
\mathcal E(A_c,C_c)=0,
\qquad
\partial_A\mathcal E(A_c,C_c)=0.
\label{eq:coupled-fold-conditions}
\end{equation}
Assume further that \(\partial_{AA}\mathcal E(A_c,C_c)\neq0\) and
\(\partial_C\mathcal E(A_c,C_c)\neq0\), and that
\begin{equation}
\operatorname{tr}J_c
=
\partial_A\widehat F(A_c,B_c,C_c)-\varepsilon
<0,
\label{eq:coupled-trace}
\end{equation}
where
\[
J_c
=
\begin{pmatrix}
\partial_A\widehat F(A_c,B_c,C_c)
&
\partial_B\widehat F(A_c,B_c,C_c)
\\
\varepsilon\partial_A\Psi(A_c,C_c)
&
-\varepsilon
\end{pmatrix}.
\]
Then \((A_c,B_c)\) is a nondegenerate saddle-node equilibrium of the
coupled activity--adaptation system at \(C=C_c\).

More precisely, there is a one-dimensional centre manifold through
\((A_c,B_c)\), together with a smooth local coordinate \(u\) and a smooth
signed parameter \(\nu=\nu(C)\), satisfying
\(\nu(C_c)=0\) and \(\nu'(C_c)\neq0\),
such that the dynamics on the centre manifold are locally equivalent to
\begin{equation}
\dot u
=
\nu-u^2
+
O\bigl(|u|^3+|\nu||u|+\nu^2\bigr).
\label{eq:coupled-normal-form}
\end{equation}
After choosing the sign of \(\nu\), the coupled system has two nearby
equilibria for \(\nu>0\), one asymptotically stable and one a saddle; one
nonhyperbolic saddle-node equilibrium for \(\nu=0\); and no nearby
equilibrium for \(\nu<0\).
\end{proposition}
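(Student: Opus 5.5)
The plan is to reduce the coupled saddle-node to a standard one-dimensional saddle-node on a centre manifold. The nondegeneracy hypotheses will be matched to the derivatives of the scalar residual \(\mathcal E\).

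\textbf{Step 1: linear algebra at the candidate point.} Compute \(\det J_c\):
\[
\det J_c=-\varepsilon\,\partial_A\widehat F-\varepsilon\,\partial_B\widehat F\,\partial_A\Psi=-\varepsilon\,\partial_A\mathcal E(A_c,C_c)=0,
\]
using \eqref{eq:coupled-fold-derivative}. Since \(\det J_c=0\) and \(\operatorname{tr}J_c<0\) by \eqref{eq:coupled-trace}, the eigenvalues of \(J_c\) are \(0\) and \(\operatorname{tr}J_c<0\), and both are simple. I would write down a right null vector \(v\) and a left null vector \(w\). From the second row, \(v=(1,\partial_A\Psi)^{\mathsf T}\). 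From the second column, \(w=(\varepsilon\partial_A\Psi,\,-\partial_B\widehat F)\) up to scaling; one checks \(w J_c=0\) using \(\partial_A\mathcal E=0\). Then compute \(w\cdot v=\varepsilon\partial_A\Psi-\partial_B\widehat F\,\partial_A\Psi\). This pairing could vanish, so I would instead normalize \(w\) by the standard formula from the simple-eigenvalue projection. The zero eigenvalue is algebraically simple because the trace is nonzero, which guarantees \(w\cdot v\neq0\) automatically.

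\textbf{Step 2: centre manifold and reduction.} Apply the parameter-dependent centre manifold theorem in \((A,B,C)\) with \(C\) as a trivial extra variable. This gives a one-dimensional \(C^2\) (indeed \(C^{3-}\)) centre manifold tangent to \(v\), since the remaining eigenvalue \(\operatorname{tr}J_c\) is negative. The cleanest route avoids computing centre-manifold coefficients. Because \(\partial_A\mathcal E=0\), I would first show that \(\varepsilon\) divides nothing problematic. Then I would use the standard Sotomayor-type criterion (Kuznetsov, Thm.~3.1 in the \(n\)-dimensional form). It requires three conditions: \(w\cdot\partial_C G\neq0\), and \(w\cdot D^2G(v,v)\neq0\), where \(G=(\widehat F,\varepsilon(\Psi-B))\), and the zero eigenvalue is simple.

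\textbf{Step 3: checking the two coefficients.} This is the main obstacle, namely identifying the Sotomayor quantities with the derivatives of \(\mathcal E\). First, \(\partial_C G=(\partial_C\widehat F,\varepsilon\partial_C\Psi)\). Pairing with \(w=(\varepsilon\partial_A\Psi,-\partial_B\widehat F)\) does not directly give \(\partial_C\mathcal E\). So I would instead choose the left null vector by solving \(wJ_c=0\) with second component chosen to reproduce the chain rule, namely \(w=(1,\partial_B\widehat F/\varepsilon)\). Checking this: the first column gives \(\partial_A\widehat F+\partial_B\widehat F\,\partial_A\Psi=\partial_A\mathcal E=0\), and the second column gives \(\partial_B\widehat F-\partial_B\widehat F=0\). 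With this \(w\), we get \(w\cdot\partial_C G=\partial_C\widehat F+\partial_B\widehat F\,\partial_C\Psi=\partial_C\mathcal E\neq0\). Similarly, \(w\cdot D^2G(v,v)\) expands to \(\partial_{AA}\widehat F+2\partial_{AB}\widehat F\,\partial_A\Psi+\partial_{BB}\widehat F(\partial_A\Psi)^2+\partial_B\widehat F\,\partial_{AA}\Psi\). This is exactly \(\partial_{AA}\mathcal E\neq0\), since \(\Psi\) is independent of \(B\). Also \(w\cdot v=1+\partial_B\widehat F\,\partial_A\Psi/\varepsilon=-\operatorname{tr}J_c/\varepsilon\cdot(\ldots)\). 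More precisely it equals \((\varepsilon-\partial_A\widehat F)/\varepsilon=-\operatorname{tr}J_c/\varepsilon>0\), which confirms the nondegenerate projection.

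\textbf{Step 4: normal form and stability.} With these two coefficients nonzero, Proposition~\ref{prop:local-saddle-node} applied to the reduced scalar centre-manifold equation gives \eqref{eq:coupled-normal-form}. The projected parameter derivative and quadratic coefficient are the above quantities divided by \(w\cdot v\). After rescaling \(u\) and defining \(\nu(C)\) proportional to \(\partial_C\mathcal E\,(C-C_c)+O((C-C_c)^2)\), we obtain \(\nu'(C_c)\neq0\). The equilibrium count follows from the normal form. Each nearby equilibrium has one eigenvalue near \(\operatorname{tr}J_c<0\) and one equal to \(\mp2u_*+\dots\) on the centre manifold. This yields one asymptotically stable node and one saddle for \(\nu>0\), the nonhyperbolic point at \(\nu=0\), and none for \(\nu<0\).
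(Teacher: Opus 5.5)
Your proposal is correct and follows the paper's route: $\det J_c=-\varepsilon\,\partial_A\mathcal E=0$ together with $\operatorname{tr}J_c<0$ gives a simple zero eigenvalue with a stable transverse direction, and the centre-manifold reduction plus the standard saddle-node theorem from Kuznetsov then give the normal form. Your Step~3 usefully makes explicit what the paper only asserts: with $w=(1,\partial_B\widehat F/\varepsilon)$ and $v=(1,\partial_A\Psi)^{\mathsf T}$, the projected coefficients are exactly $\partial_C\mathcal E$ and $\partial_{AA}\mathcal E$, and $w\cdot v=-\operatorname{tr}J_c/\varepsilon>0$. The left null vector $(\varepsilon\partial_A\Psi,-\partial_B\widehat F)$ you first proposed in Step~1 does not annihilate $J_c$, so drop it; also note that the centre manifold of a $C^3$ field can be taken $C^3$, which is what the stated $O(|u|^3)$ remainder requires.
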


\begin{proof}
The equilibrium equation for \(B\) gives \(B=\Psi(A,C)\). Substitution
into the activity equation reduces the coupled equilibrium problem to
the scalar equation \(\mathcal E(A,C)=0\).
The Jacobian of the coupled system at \((A_c,B_c,C_c)\) is \(J_c\), and
its determinant satisfies
\begin{align*}
\det J_c
&=
-\varepsilon
\left[
\partial_A\widehat F(A_c,B_c,C_c)
+
\partial_B\widehat F(A_c,B_c,C_c)
\partial_A\Psi(A_c,C_c)
\right]\\
&=
-\varepsilon\partial_A\mathcal E(A_c,C_c)
=
0.
\end{align*}
Thus \(J_c\) has a zero eigenvalue. Condition
\eqref{eq:coupled-trace} shows that the second eigenvalue is nonzero and
negative, so the zero eigenvalue is simple and the transverse direction
is asymptotically stable.

The conditions \(\partial_{AA}\mathcal E(A_c,C_c)\neq0\) and
\(\partial_C\mathcal E(A_c,C_c)\neq0\) are the quadratic
nondegeneracy and parameter-transversality conditions
for a generic saddle-node. The centre-manifold theorem and the standard
saddle-node normal-form reduction therefore give
\eqref{eq:coupled-normal-form} after smooth changes of state, parameter,
and time \cite{Kuznetsov2004}. The local equilibrium structure follows from
the normal form and the negativity of the transverse eigenvalue.
\end{proof}

The coupled fold condition differs from the frozen-\(B\) condition used
in the preceding section. The frozen activity subsystem has a fold when
\[
\partial_A\widehat F(A_c,B_c,C_c)=0,
\]
whereas the coupled equilibrium branch has a fold when
\[
\partial_A\widehat F(A_c,B_c,C_c)
+
\partial_B\widehat F(A_c,B_c,C_c)
\partial_A\Psi(A_c,C_c)
=
0.
\]
The two conditions coincide only when 
\[
\partial_B\widehat F(A_c,B_c,C_c)
\partial_A\Psi(A_c,C_c)=0.
\]
Thus the fold curve \(\mathcal F\) of the frozen activity subsystem should
not in general be identified with the saddle-node set of the coupled
activity--adaptation system. Nevertheless, the frozen fold remains
dynamically relevant because the slowly evolving variable \(B\) may
carry trajectories through its neighbourhood. This slow-passage problem
is considered next. Figure~\ref{fig:frozen-fold-geometry} illustrates these two
geometries and the possibility of slow passage through a frozen fold.

\begin{figure}[htbp]
\centering
\includegraphics[width=0.97\textwidth]{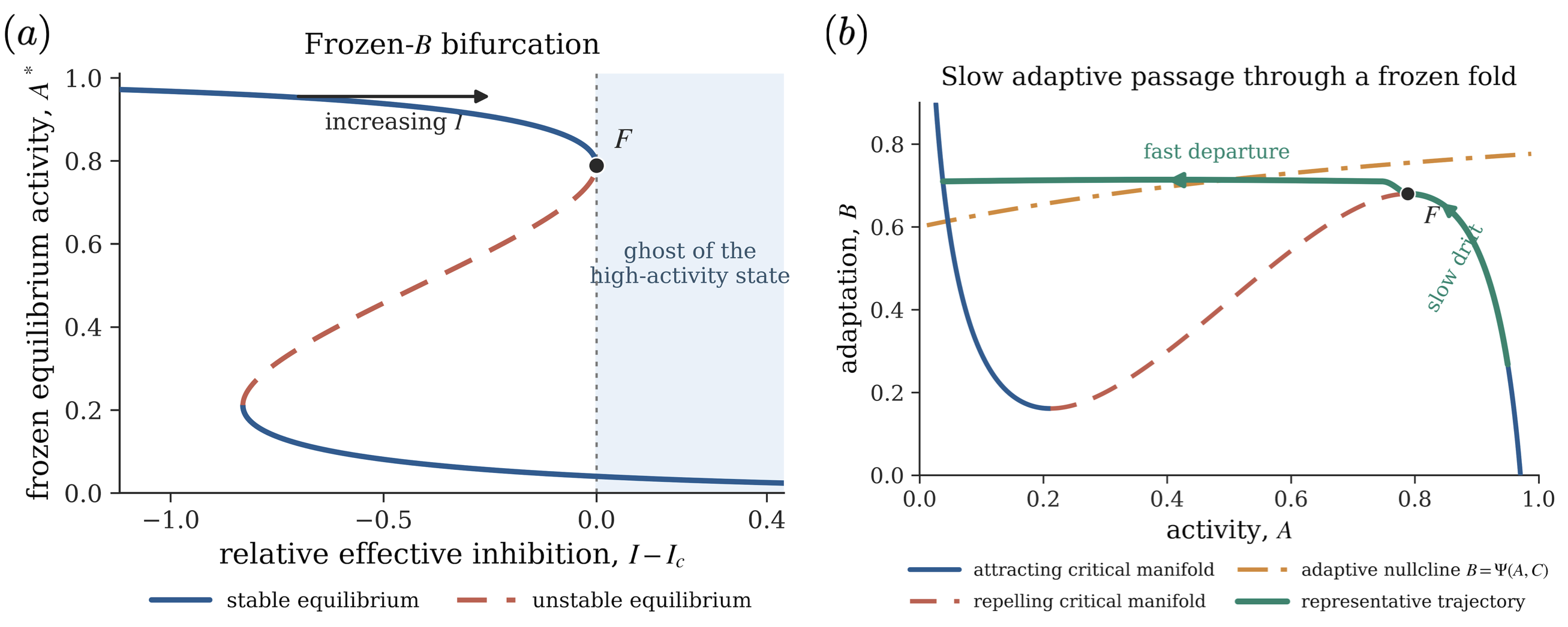}
\caption{Representative fold geometry of the reduced
activity--adaptation model. The parameter values are illustrative
and are not fitted to the viability data.
\textup{(a)} Frozen-\(B\) equilibria of the activity equation as
effective inhibition \(I\) increases. Solid and dashed curves denote
stable and unstable equilibria, respectively. The high-activity and
unstable branches meet at the frozen fold \(F\); beyond it, the
trajectory enters the ghost region.
\textup{(b)} Critical manifold and adaptive nullcline at fixed \(C\).
The representative trajectory drifts toward and passes through the
frozen fold before undergoing fast departure. Because the fold does
not generally lie on \(B=\Psi(A,C)\), it need not be a saddle-node
equilibrium of the coupled system.}
\label{fig:frozen-fold-geometry}
\end{figure}

\subsection{Slow adaptive passage through the frozen fold}

We now consider the effect of the slow evolution of \(B\) on trajectories
passing near the fold of the frozen activity subsystem. Let
\(\mu=\mu(B,C)\) be the signed unfolding parameter introduced in the
preceding section, with \(\mu>0\) on the side with two frozen equilibria
and \(\mu<0\) on the side without equilibria. For fixed \(C\), differentiation
along a trajectory of the coupled system gives
\begin{equation}
\dot\mu
=
\partial_B\mu(B,C)\dot B
=
\varepsilon\partial_B\mu(B,C)
\bigl(\Psi(A,C)-B\bigr).
\label{eq:mu-drift}
\end{equation}
Thus \(\mu\) is constant only in the singular limit
\(\varepsilon=0\), or when the adaptive vector field vanishes.

In the local coordinates and normalized time used in the saddle-node
reduction, the coupled dynamics near a frozen fold can be written as
\begin{align}
\dot a
&=
\mu-a^2+\mathcal R(a,\mu,\varepsilon),
\label{eq:slow-fold-a}\\
\dot\mu
&=
\varepsilon\mathcal Q(a,\mu,C),
\label{eq:slow-fold-mu}
\end{align}
where
\(\mathcal R(a,\mu,\varepsilon)=
O\bigl(|a|^3+|\mu||a|+\mu^2+\varepsilon\bigr)\) and
\(\mathcal Q(a,\mu,C)=\partial_B\mu(B,C)
\bigl(\Psi(A,C)-B\bigr)\),
with \(A\) and \(B\) expressed in the local coordinates. The
\(O(\varepsilon)\) term in \(\mathcal R\) accounts for the fact that the
coordinate transformation used in the frozen-parameter normal form may
itself depend on the slowly varying parameter \(\mu\).

We first identify the regime in which the frozen-\(B\) ghost estimate
remains valid.

\begin{proposition}[Validity of the frozen ghost law under slow adaptation]
\label{prop:frozen-ghost-law}
Fix \(M>0\) sufficiently small, and suppose that \(\mathcal Q\) remains
bounded in the local fold neighbourhood. Let
\(T_{\delta,\varepsilon}\) be the first time at which a trajectory of
\eqref{eq:slow-fold-a}--\eqref{eq:slow-fold-mu}, starting from
\(a(0)=M\), \(\mu(0)=-\delta\), \(\delta>0\),
reaches \(a=-M\). If
\begin{equation}
\frac{\varepsilon}{\delta^{3/2}}
\longrightarrow0
\qquad
\text{as }\delta\downarrow0,
\label{eq:frozen-ghost-condition}
\end{equation}
then
\begin{equation}
T_{\delta,\varepsilon}
\sim
\frac{\pi}{\sqrt{\delta}}.
\label{eq:slow-frozen-ghost}
\end{equation}
\end{proposition}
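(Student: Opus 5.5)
The plan is to treat \eqref{eq:slow-fold-a}--\eqref{eq:slow-fold-mu} as a perturbation of the frozen ghost equation \eqref{eq:ghost} and compare passage times. The mechanism is a scale count. The ghost passage lasts $O(\delta^{-1/2})$, during which $\mu$ drifts by at most $\varepsilon\sup|\mathcal Q|\cdot O(\delta^{-1/2})$. Relative to $\delta$ this drift is $O(\varepsilon/\delta^{3/2})$, which tends to zero by \eqref{eq:frozen-ghost-condition}. So $\mu$ stays within a factor $1+o(1)$ of $-\delta$, and the frozen residence-time computation of Proposition~\ref{prop:ghost-residence-time} applies up to a relative error $o(1)$.

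\textbf{Step 1 (bootstrap on the drift of $\mu$).} Let $Q_*=\sup|\mathcal Q|$ on the fold neighbourhood and $\theta_\delta=2\pi Q_*\varepsilon/\delta^{3/2}$, so that $\theta_\delta\to0$. Define $\tau$ as the first time at which $|\mu(t)+\delta|>\theta_\delta\delta$ or $t>2\pi/\sqrt\delta$. On $[0,\tau]$ we have $\mu(t)\in[-(1+\theta_\delta)\delta,-(1-\theta_\delta)\delta]$. The remaining steps will show that the trajectory reaches $a=-M$ before time $(1+o(1))\pi/\sqrt\delta<2\pi/\sqrt\delta$. On that interval $|\mu+\delta|\le\varepsilon Q_* t<\theta_\delta\delta$, so neither exit condition is triggered first, and the estimates hold on the whole passage.

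\textbf{Step 2 (monotonicity and an outer region).} Choose $M$ so small that the terms $|a|^3$ and $|\mu||a|$ in $\mathcal R$ are bounded by $\tfrac14(a^2+\delta)$ for $|a|\le M$. This also covers $\mu^2=O(\delta^2)$ and the $\varepsilon$ term, since $\varepsilon=o(\delta^{3/2})$. Then $\dot a\le-\tfrac12(a^2+\delta)<0$, so $a$ decreases strictly and the first hitting time of $-M$ is well defined. Split the interval $[-M,M]$ at $|a|=\delta^{1/4}$. On the outer pieces $\delta^{1/4}\le|a|\le M$ we have $\dot a\le-a^2/2$. The time spent there is therefore at most $4\delta^{-1/4}=o(\delta^{-1/2})$, and it does not contribute to the leading term.

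\textbf{Step 3 (inner region, the main step).} For $|a|\le\delta^{1/4}$, bound each part of $\mathcal R$ against $a^2+\delta$:
\[
|a|^3\le\delta^{1/4}a^2,\qquad |\mu||a|\le2\delta^{5/4},\qquad \mu^2\le4\delta^2,\qquad \varepsilon=o(\delta^{3/2}).
\]
Together with Step 1 this gives
\[
\dot a=-(1+\omega(t))(a^2+\delta),\qquad |\omega(t)|\le C(\delta^{1/4}+\theta_\delta)\to0 .
\]
Separating variables, the inner passage time equals
\[
(1+o(1))\int_{-\delta^{1/4}}^{\delta^{1/4}}\frac{da}{a^2+\delta}=(1+o(1))\,\frac{2}{\sqrt\delta}\arctan(\delta^{-1/4})\sim\frac{\pi}{\sqrt\delta}.
\]
Adding the outer time from Step 2 gives \eqref{eq:slow-frozen-ghost}, and it also confirms the a priori bound $T_{\delta,\varepsilon}<2\pi/\sqrt\delta$ assumed in Step 1.

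The main obstacle is the uniformity in Step 3. The remainder $\mathcal R$ is only small relative to $a^2+\delta$ near $a=0$, and the separation of the region at $\delta^{1/4}$ is what reconciles this with the fixed endpoints $\pm M$. I would check carefully that the $\varepsilon$ term in $\mathcal R$, which comes from the $\mu$-dependence of the normal-form coordinates, is indeed $o(\delta)$; this follows from $\varepsilon=o(\delta^{3/2})$. I would also verify that the bootstrap closes without circularity: Steps 2 and 3 use only the bound on $\mu$ valid up to time $\tau$, and they produce a hitting time strictly smaller than the threshold defining $\tau$.
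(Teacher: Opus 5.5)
Your proposal is correct and follows the paper's proof. You bootstrap the $O(\varepsilon\delta^{-1/2})=o(\delta)$ drift of $\mu$ over an $O(\delta^{-1/2})$ passage window, compare with the frozen ghost equation in an inner region, and show the outer regions contribute negligibly. The only difference is that you split at the intermediate scale $|a|=\delta^{1/4}$, which gives an explicit $O(\delta^{-1/4})$ outer time, whereas the paper splits at $|a|=L\sqrt\delta$ and takes the iterated limit $\delta\downarrow0$, then $L\to\infty$.
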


\begin{proof}
Let \(Q_0>0\) be a uniform bound for \(\mathcal Q\) in the local fold
neighbourhood. As long as the trajectory remains in this neighbourhood,
\eqref{eq:slow-fold-mu} gives $|\mu(t)+\delta|
\le
\varepsilon Q_0t$. We first establish the passage-time bound
$T_{\delta,\varepsilon}=O(\delta^{-1/2})$. Because \(M>0\) is sufficiently
small and \(\mathcal R(a,\mu,\varepsilon)=
O\bigl(|a|^3+|\mu||a|+\mu^2+\varepsilon\bigr)\),
a comparison argument, initially under the bootstrap assumption
\(|\mu(t)+\delta|\le\delta/2\), gives constants \(c_1,c_2>0\),
independent of \(\delta\), such that
\[
-c_2(a^2+\delta)
\le
\dot a
\le
-c_1(a^2+\delta)
\]
throughout the passage. Integration therefore gives $T_{\delta,\varepsilon}
=
O(\delta^{-1/2})$. Consequently,
\[
\sup_{0\le t\le T_{\delta,\varepsilon}}
|\mu(t)+\delta|
=
O(\varepsilon\delta^{-1/2})
=
o(\delta)
\]
by \eqref{eq:frozen-ghost-condition}. This closes the bootstrap and shows
that $\mu(t)=-\delta+o(\delta)$ uniformly during the bottleneck passage.

Introduce the scaled variables \(a=\sqrt{\delta}\,y\) and
\(s=\sqrt{\delta}\,t\). On every fixed bounded \(y\)-interval, the
activity equation becomes \(dy/ds=-(1+y^2)+o(1)\),
uniformly as \(\delta\downarrow0\). Hence, for every fixed \(L>0\), the
scaled time required to pass from \(y=L\) to \(y=-L\) converges to
\(\int_{-L}^{L}(1+y^2)^{-1}\,dy=2\arctan L\).

Standard comparison estimates in the regions
\(L\sqrt{\delta}\le |a|\le M\) show that their combined contribution to
the scaled passage time is \(O(L^{-1})\), uniformly for sufficiently
small \(\delta\). Taking first \(\delta\downarrow0\) and then
\(L\to\infty\), we obtain
\[
\sqrt{\delta}\,T_{\delta,\varepsilon}
\longrightarrow
\int_{-\infty}^{\infty}\frac{dy}{1+y^2}
=
\pi.
\]
Therefore \(T_{\delta,\varepsilon}\sim\pi/\sqrt{\delta}\).
\end{proof}

When $\varepsilon/\delta^{3/2}$ is not asymptotically small,
Proposition~\ref{prop:frozen-ghost-law} no longer guarantees that
the frozen-$B$ approximation is uniform through the bottleneck.
The resulting behaviour depends on the direction and magnitude
of the adaptive drift. We next consider the case of a transverse
adaptive crossing of the frozen fold.

After choosing the orientation of $\mu$ and absorbing the
positive factor induced by the normal-form time rescaling, define
\[
\gamma_c
=
-\vartheta_c\,
\partial_B\mu(B_c,C_c)
\bigl(\Psi(A_c,C_c)-B_c\bigr),
\qquad
\vartheta_c>0,
\]
where $\vartheta_c$ is the time-normalization factor. A
transverse adaptive crossing corresponds to $\gamma_c>0$.

For the sigmoidal prototype, this condition and the normal-form
coefficient can be written entirely in terms of the model parameters.

\begin{proposition}[Explicit transverse passage for the sigmoidal model]
\label{prop:explicit-transverse-fold}
Fix $C=C_{\mathrm f}$, and suppose that the hypotheses of
Proposition~\ref{prop:explicit-sigmoid-fold} hold. Define
\begin{equation}
D_{\mathrm f}
=
\Psi(A_{\mathrm f},C_{\mathrm f})
-B_{\mathrm f}(C_{\mathrm f}).
\label{eq:explicit-transverse-condition}
\end{equation}
If
\begin{equation}
D_{\mathrm f}>0,
\label{eq:explicit-transverse-positive}
\end{equation}
or, equivalently,
\begin{equation}
\frac{\kappa_AA_{\mathrm f}+\kappa_CC_{\mathrm f}}
{1+\kappa_AA_{\mathrm f}+\kappa_CC_{\mathrm f}}
>
\frac{h(C_{\mathrm f})-q_{\mathrm f}}{\rho},
\label{eq:explicit-transverse-parameters}
\end{equation}
then the slow adaptive flow crosses the high-activity frozen fold
transversely from the side with two frozen equilibria to the side with
no nearby frozen equilibrium.

More precisely, let
\begin{equation}
k_{\mathrm f}=\frac{\lambda\gamma d}{2},
\qquad
\Delta=h(C_{\mathrm f})-\rho B-q_{\mathrm f},
\qquad
a=k_{\mathrm f}(A-A_{\mathrm f}),
\qquad
\mu=\frac{k_{\mathrm f}\lambda}{\gamma}\Delta.
\label{eq:explicit-normal-form-coordinates}
\end{equation}
Then the local system has the form
\begin{align}
\dot a
&=
\mu-a^2
+O\bigl(|a|^3+|a||\mu|+\mu^2\bigr),
\label{eq:explicit-normal-form-a}\\
\dot\mu
&=
-\varepsilon\gamma_c
+O\bigl(\varepsilon(|a|+|\mu|)\bigr),
\label{eq:explicit-normal-form-mu}
\end{align}
where
\begin{equation}
\gamma_c
=
\frac{k_{\mathrm f}\lambda\rho}{\gamma}D_{\mathrm f}
=
\frac{\lambda^2\rho d}{2}D_{\mathrm f}
>0.
\label{eq:explicit-gamma-c}
\end{equation}
If a trajectory starts sufficiently close to the upper attracting
frozen branch with $B<B_{\mathrm f}(C_{\mathrm f})$, then it tracks
that branch into the fold neighbourhood and satisfies the tracking
hypothesis of Theorem~\ref{thm:adaptive-delay}.
\end{proposition}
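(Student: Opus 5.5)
The plan is to verify the statement by a direct second-order Taylor expansion of \eqref{eq:sigmoid-frozen-field} about the fold point of Proposition~\ref{prop:explicit-sigmoid-fold}. I would then read off the adaptive drift from \eqref{eq:B}, and finally invoke normal hyperbolicity of the upper branch for the tracking claim. The equivalence of \eqref{eq:explicit-transverse-positive} and \eqref{eq:explicit-transverse-parameters} is immediate: substitute \eqref{eq:Psi} and \eqref{eq:explicit-fold-curve} into \eqref{eq:explicit-transverse-condition}.

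\textbf{Step 1: expansion of the activity field.} Write $x=A-A_{\mathrm f}$ and $z=h(C_{\mathrm f})+\gamma A-\rho B$. The fold value is $z_{\mathrm f}=q_{\mathrm f}+\gamma A_{\mathrm f}=\log\bigl(A_{\mathrm f}/(1-A_{\mathrm f})\bigr)$, so $\sigma(z_{\mathrm f})=A_{\mathrm f}$ and $z-z_{\mathrm f}=\gamma x+\Delta$. Using $\sigma'=\sigma(1-\sigma)=1/\gamma$ and $\sigma''=\sigma'(1-2\sigma)=-d/\gamma$ at $z_{\mathrm f}$, we get
\[
\widehat F_{\mathrm{sig}}
=\lambda\Bigl[\tfrac{\Delta}{\gamma}-\tfrac{d}{2\gamma}(\gamma x+\Delta)^2+O(|\gamma x+\Delta|^3)\Bigr]
=\tfrac{\lambda}{\gamma}\Delta-\tfrac{\lambda\gamma d}{2}x^2+O(|x|^3+|x||\Delta|+\Delta^2).
\]
The linear $x$-terms cancel exactly, as they must at the fold. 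Multiplying by $k_{\mathrm f}$ and setting $a=k_{\mathrm f}x$ gives $\dot a=k_{\mathrm f}\lambda\Delta/\gamma-(\lambda\gamma d/2)k_{\mathrm f}x^2=\mu-a^2$, because $(\lambda\gamma d/2)k_{\mathrm f}x^2=k_{\mathrm f}^2x^2=a^2$. Since $\mu$ is a fixed positive multiple of $\Delta$ and $a$ a fixed multiple of $x$, the remainder becomes $O(|a|^3+|a||\mu|+\mu^2)$, which is \eqref{eq:explicit-normal-form-a}. No time rescaling is needed, so $\vartheta_c=k_{\mathrm f}\lambda/\gamma$ in the general definition of $\gamma_c$.

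\textbf{Step 2: drift of $\mu$.} Since $C$ is fixed, $\dot\mu=(k_{\mathrm f}\lambda/\gamma)\dot\Delta=-(k_{\mathrm f}\lambda\rho/\gamma)\,\varepsilon\bigl(\Psi(A,C_{\mathrm f})-B\bigr)$. Lipschitz continuity of $\Psi$ gives
\[
\Psi(A,C_{\mathrm f})-B=D_{\mathrm f}+O(|A-A_{\mathrm f}|+|B-B_{\mathrm f}|).
\]
Moreover $|B-B_{\mathrm f}|=|\Delta|/\rho\propto|\mu|$, and $|A-A_{\mathrm f}|\propto|a|$. This yields \eqref{eq:explicit-normal-form-mu} with $\gamma_c$ as in \eqref{eq:explicit-gamma-c}; the identity $k_{\mathrm f}\lambda\rho/\gamma=\lambda^2\rho d/2$ is arithmetic. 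The condition $D_{\mathrm f}>0$ then gives $\dot\mu<0$ near the fold. Trajectories therefore move from $\mu>0$, where Proposition~\ref{prop:explicit-sigmoid-fold} gives two frozen equilibria, to $\mu<0$ transversely.

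\textbf{Step 3: tracking, the main obstacle.} Fix a small fold box $\{|a|\le M,\ |\mu|\le M^2\}$. Outside it, the upper frozen branch $A=A_+(B)$ with $B<B_{\mathrm f}$ has $\partial_A\widehat F_{\mathrm{sig}}\le-c(M)<0$ by Proposition~\ref{prop:explicit-sigmoid-fold}, so it is a compact normally attracting piece of the critical manifold $\mathcal C_0$. Fenichel theory gives a slow manifold $O(\varepsilon)$-close to it that attracts nearby initial data exponentially on the $\lambda$-timescale. On this slow manifold the reduced flow is $\dot B=\varepsilon(\Psi(A_+(B),C_{\mathrm f})-B)$. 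By continuity from $D_{\mathrm f}>0$ this is positive on a neighbourhood of $B_{\mathrm f}$, so $B$ increases into the fold box.

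The delicate point is that positivity is guaranteed only near $B_{\mathrm f}$. So I would state ``sufficiently close'' to mean starting in that neighbourhood, where $\Psi-B\ge D_{\mathrm f}/2$. I would then check that the entry data on the box boundary, $a=\sqrt{\mu}+O(\varepsilon/\mu)$ with $\mu=M^2$, matches whatever entry hypothesis Theorem~\ref{thm:adaptive-delay} requires.
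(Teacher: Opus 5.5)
Your Steps~1 and~2 are correct and match the paper's proof. You use the same Taylor expansion about $z_{\mathrm f}$, the same cancellation of the linear terms, and the same expansion of $\Psi(A,C_{\mathrm f})-B$ about the fold.

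The gap is in Step~3. You assert that positivity of the slow drift ``is guaranteed only near $B_{\mathrm f}$'', and you therefore shrink ``sufficiently close'' to mean starting near the fold. That proves a strictly weaker claim, because the proposition concerns trajectories starting near \emph{any} point of the upper attracting branch with $B<B_{\mathrm f}(C_{\mathrm f})$. A continuity argument from $D_{\mathrm f}>0$ cannot exclude a zero of $B\mapsto\Psi(A_{\mathrm s}(B),C_{\mathrm f})-B$ on the branch away from the fold (here $A_{\mathrm s}$ is your $A_+$). Such a zero is a coupled equilibrium lying on the attracting branch. Trajectories that start near the branch at smaller $B$ cannot pass it, so they never reach the fold neighbourhood.

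The missing idea is the one-line monotonicity comparison on which the paper's proof rests. On the upper branch $A_{\mathrm s}(B)>A_{\mathrm f}$ and $B<B_{\mathrm f}$, and $\Psi$ is increasing in $A$, so
\[
\Psi\bigl(A_{\mathrm s}(B),C_{\mathrm f}\bigr)-B
\;\geq\;
\Psi(A_{\mathrm f},C_{\mathrm f})-B
\;>\;
\Psi(A_{\mathrm f},C_{\mathrm f})-B_{\mathrm f}(C_{\mathrm f})
=D_{\mathrm f}>0 .
\]
This bounds $\Psi-B$ below by $D_{\mathrm f}$ along the whole branch and rules out coupled equilibria on it. Your Fenichel argument then carries every nearby trajectory into the fold box.

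Relatedly, the bound $\partial_A\widehat F_{\mathrm{sig}}\leq-c(M)$ on the branch outside the box does not follow from Proposition~\ref{prop:explicit-sigmoid-fold}, which is local. It holds because $g(A)=\log\bigl(A/(1-A)\bigr)-\gamma A$ is strictly increasing on $(A_{\mathrm f},1)$ with $g(A_{\mathrm f})=q_{\mathrm f}$. Hence each $B<B_{\mathrm f}$ has exactly one upper equilibrium $A_{\mathrm s}(B)\in(A_{\mathrm f},1)$, and there $\gamma A_{\mathrm s}(1-A_{\mathrm s})<1$.

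Second, entry data on the fixed box boundary $\mu=M^2$ is not yet the tracking hypothesis of Theorem~\ref{thm:adaptive-delay}. That hypothesis is posed at the inner scale $\mu=T\omega^{2/3}$, $\omega=\varepsilon\gamma_c$, with $\varepsilon\downarrow0$ before $T\to\infty$, so you must continue the slow manifold inward:
\begin{itemize}
\item Writing $a=\sqrt\mu+w$ gives $\dot w=-2\sqrt\mu\,w-w^2+\omega/(2\sqrt\mu)+\cdots$.
\item Its attracting quasi-steady solution $w\approx\omega/(4\mu)$ stays small relative to $\sqrt\mu$ while $\omega\mu^{-3/2}\ll1$.
\item In scaled variables this yields $y_\varepsilon(-T)=\sqrt T+O(T^{-1})+o(1)$, and integrating the drift gives $m_\varepsilon(-T)=T+o(1)$.
\end{itemize}
The paper compresses this into ``standard slow-manifold tracking'', but it is the step your plan leaves open.

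A minor point: with $\mu$ normalized as in \eqref{eq:explicit-normal-form-coordinates}, $\partial_B\mu=-k_{\mathrm f}\lambda\rho/\gamma$. So \eqref{eq:explicit-gamma-c} corresponds to $\vartheta_c=1$ (no time rescaling), not $\vartheta_c=k_{\mathrm f}\lambda/\gamma$.
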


\begin{proof}
Along the upper frozen branch one has $A_{\mathrm s}(B)>A_{\mathrm f}$
and $B<B_{\mathrm f}$. Since $\Psi$ is increasing in $A$,
\[
\Psi(A_{\mathrm s}(B),C_{\mathrm f})-B
\geq
\Psi(A_{\mathrm f},C_{\mathrm f})
-B_{\mathrm f}(C_{\mathrm f})
=D_{\mathrm f}>0.
\]
Thus $B$ increases and
\(\dot\Delta=-\varepsilon\rho(\Psi-B)<0\), so the slow flow is
directed transversely through $\Delta=0$. Away from the fold, the
upper branch is normally hyperbolic and attracting. Standard
slow-manifold tracking therefore carries trajectories that start near
this branch into the fold neighbourhood.

To obtain the coefficients, observe that
\[
h(C_{\mathrm f})+\gamma A-\rho B
=
\log\left(\frac{A_{\mathrm f}}{1-A_{\mathrm f}}\right)
+\gamma(A-A_{\mathrm f})+\Delta.
\]
Taylor expansion of the sigmoid at the fold, followed by the scaling
\eqref{eq:explicit-normal-form-coordinates}, gives
\eqref{eq:explicit-normal-form-a}. Moreover,
\(\dot\mu=-\varepsilon(k_{\mathrm f}\lambda\rho/\gamma)
\bigl(\Psi(A,C_{\mathrm f})-B\bigr)\).
Expanding the last factor about the fold gives
\eqref{eq:explicit-normal-form-mu} and
\eqref{eq:explicit-gamma-c}.
\end{proof}

For the parameters in Remark~\ref{rem:explicit-parameter-example},
\(\Psi(A_{\mathrm f},1)\approx0.73605\),
\(D_{\mathrm f}\approx0.13605\), and \(\gamma_c\approx0.23565\).
Thus the explicit fold and transversality conditions are simultaneously
satisfied in a biologically admissible state region.

Near such a crossing, the normalized local system takes the form
\begin{align}
\dot a
&=
\mu-a^2+R(a,\mu,\varepsilon),
\label{eq:dynamic-fold-a}\\
\dot\mu
&=
-\varepsilon\gamma_c
+\varepsilon\widetilde S(a,\mu,\varepsilon),
\label{eq:dynamic-fold-mu}
\end{align}
where
$\widetilde S(a,\mu,\varepsilon)
=O(|a|+|\mu|+\varepsilon)$.

\begin{theorem}[Delay under transverse adaptive passage]
\label{thm:adaptive-delay}
Consider \eqref{eq:dynamic-fold-a}--\eqref{eq:dynamic-fold-mu},
where $\gamma_c>0$ is independent of $\varepsilon$. Assume that
$R$ and $\widetilde S$ are locally $C^2$ and, uniformly in a
fixed fold neighbourhood,
\[
R=O\bigl(|a|^3+|\mu||a|+\mu^2+\varepsilon\bigr),
\qquad
\widetilde S=O\bigl(|a|+|\mu|+\varepsilon\bigr)
\]
as $(a,\mu,\varepsilon)\to(0,0,0)$. Shrink the neighbourhood,
if necessary, so that
$|\widetilde S|\leq\gamma_c/2$ for all sufficiently small
$\varepsilon$.

Suppose that a trajectory approaches the fold while tracking the
attracting equilibrium branch of the frozen activity subsystem.
Let $t_c$ be the unique time at which $\mu(t_c)=0$, set
$\omega=\varepsilon\gamma_c$, and define
$y_\varepsilon(\tau)=\omega^{-1/3}
a(t_c+\omega^{-1/3}\tau)$ and
$m_\varepsilon(\tau)=\omega^{-2/3}
\mu(t_c+\omega^{-1/3}\tau)$. Assume that
\[
\lim_{T\to\infty}\limsup_{\varepsilon\downarrow0}
\left(
|m_\varepsilon(-T)-T|
+
|y_\varepsilon(-T)-\sqrt T|
\right)=0.
\]
Then, uniformly on compact subsets of
$(-\infty,z_{\mathrm{Ai}})$,
\[
m_\varepsilon(\tau)\longrightarrow-\tau,
\qquad
y_\varepsilon(\tau)\longrightarrow
-\frac{\operatorname{Ai}'(-\tau)}
       {\operatorname{Ai}(-\tau)},
\]
where $z_{\mathrm{Ai}}\approx2.338107$ is the smallest positive
number satisfying $\operatorname{Ai}(-z_{\mathrm{Ai}})=0$.

Fix a sufficiently small $a_{\mathrm{out}}>0$ and define
$t_{\mathrm{exit}}$ as the first time after $t_c$ for which
$a(t_{\mathrm{exit}})=-a_{\mathrm{out}}$. Then
\begin{align}
t_{\mathrm{exit}}-t_c
&=
z_{\mathrm{Ai}}\omega^{-1/3}
+o(\omega^{-1/3}),
\label{eq:exit-time}\\
\mu(t_{\mathrm{exit}})
&=
-z_{\mathrm{Ai}}\omega^{2/3}
+o(\omega^{2/3}).
\label{eq:exit-mu}
\end{align}
The leading-order terms are independent of the particular fixed
choice of $a_{\mathrm{out}}$.
\end{theorem}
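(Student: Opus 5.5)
The plan is to rescale to the inner (Airy) variables, show that the rescaled system converges to the Riccati equation $y'=-\tau-y^2$ on compact sets, and then handle the exit separately.

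\emph{Inner equations.} With $t=t_c+\omega^{-1/3}\tau$, $a=\omega^{1/3}y$, $\mu=\omega^{2/3}m$, the system \eqref{eq:dynamic-fold-a}--\eqref{eq:dynamic-fold-mu} becomes
\[
\frac{dy}{d\tau}=m-y^2+\omega^{-2/3}R,
\qquad
\frac{dm}{d\tau}=-\frac{1}{\gamma_c}\bigl(\gamma_c-\widetilde S\bigr).
\]
On sets where $y$ and $m$ are bounded, the hypotheses on $R$ give
\[
\omega^{-2/3}R=O\bigl(\omega^{1/3}+\varepsilon\omega^{-2/3}\bigr)=O(\omega^{1/3}),
\]
since $\varepsilon\omega^{-2/3}=\gamma_c^{-1}\omega^{1/3}$. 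Also $\widetilde S=O(\omega^{1/3})$ there.

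\emph{Slow variable.} Integrating the $m$-equation from $\tau=0$, where $m_\varepsilon(0)=0$ by definition of $t_c$, gives $m_\varepsilon(\tau)=-\tau+O(\omega^{1/3}|\tau|)$ uniformly on compacts. The same bound holds on $[-T,\tau]$, and $|\widetilde S|\le\gamma_c/2$ keeps $\mu$ strictly monotone. This is also why $t_c$ is unique.

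\emph{Matching and convergence.} Writing $y=-\phi'/\phi$ linearizes the limit Riccati equation to the Airy equation $\phi''=-\tau\phi$. The solution asymptotic to $\sqrt{-\tau}$ as $\tau\to-\infty$ is $-\mathrm{Ai}'(-\tau)/\mathrm{Ai}(-\tau)$. It is regular for $\tau<z_{\mathrm{Ai}}$ and blows up to $-\infty$ as $\tau\uparrow z_{\mathrm{Ai}}$.

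The matching hypothesis at $\tau=-T$ is a statement about $m_\varepsilon(-T)\approx T$; I read it as saying that $y_\varepsilon(-T)$ is close to the attracting-branch value $\sqrt{T}$. This is close to the Airy solution, because $-\mathrm{Ai}'(T)/\mathrm{Ai}(T)=\sqrt T+O(T^{-1})$.

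For $\tau\le0$ the linearization of the Riccati flow along this solution has rate $-2y\approx-2\sqrt{-\tau}<0$. Errors made at $-T$ are therefore contracted, not amplified. A Gronwall estimate on $[-T,\tau_1]$ for compact $\tau_1<z_{\mathrm{Ai}}$ then gives
\[
\limsup_{\varepsilon\downarrow0}\sup|y_\varepsilon-y_{\mathrm{Ai}}|\le C(\tau_1)\Bigl(|y_\varepsilon(-T)-y_{\mathrm{Ai}}(-T)|+o_T(1)\Bigr).
\]
Letting $T\to\infty$ yields the uniform convergence on compact subsets of $(-\infty,z_{\mathrm{Ai}})$.

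\emph{Exit (main obstacle).} Convergence on compacts does not reach $a=-a_{\mathrm{out}}$, which corresponds to $y\to-\infty$ in the inner variables. I would proceed in three steps.

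1. Fix a large $K$. By the convergence just proved, $y_\varepsilon$ reaches $-K$ at some $\tau_K(\varepsilon)\to\tau_K^{\mathrm{Ai}}$, where $\tau_K^{\mathrm{Ai}}<z_{\mathrm{Ai}}$ and $z_{\mathrm{Ai}}-\tau_K^{\mathrm{Ai}}=O(1/K)$, since $y_{\mathrm{Ai}}\sim-1/(z_{\mathrm{Ai}}-\tau)$.

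2. From $a=-K\omega^{1/3}$ to $a=-a_{\mathrm{out}}$, work with the original $a$. There $\mu=O(\omega^{2/3})$ and $|\mu|\le a^2/K$, and for $M$ small the remainder $R$ is dominated by $a^2$. Hence
\[
-\tfrac32a^2\le\dot a\le-\tfrac12a^2 .
\]
Integrating bounds the elapsed time by $O\bigl(1/(K\omega^{1/3})\bigr)+O(1/a_{\mathrm{out}})$. The second term is $o(\omega^{-1/3})$ and independent of the fixed choice of $a_{\mathrm{out}}$.

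3. Combining the two stages gives
\[
t_{\mathrm{exit}}-t_c=\omega^{-1/3}\bigl(z_{\mathrm{Ai}}+O(1/K)+o(1)\bigr).
\]
Sending $K\to\infty$ gives \eqref{eq:exit-time}.

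Finally, $\mu(t_{\mathrm{exit}})=\omega^{2/3}m_\varepsilon(\tau_{\mathrm{exit}})$. The relation $m_\varepsilon=-\tau+O(\omega^{1/3}\tau)$ holds on $\tau\le z_{\mathrm{Ai}}+1$, because the crude drift bound needs only $|a|,|\mu|$ small, not bounded inner variables. This gives \eqref{eq:exit-mu}.

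The delicate point is the uniform control of $R$ in step 2. There one needs the hypothesis $R=O(|a|^3+|\mu||a|+\mu^2+\varepsilon)$ with $\varepsilon\ll a^2$ for $|a|\ge K\omega^{1/3}$. This holds because $\varepsilon=\gamma_c^{-1}\omega\ll K^2\omega^{2/3}$.
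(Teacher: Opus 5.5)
Your proposal follows the paper's proof essentially step for step. Both use the same inner rescaling with $O(\omega^{1/3})$ remainders and the substitution $y=-\phi'/\phi$ that selects $-\operatorname{Ai}'(-\tau)/\operatorname{Ai}(-\tau)$. Both then cut off at $y=-K$ (the paper's $L$), compare with $\dot a\approx-a^2$ to get an $O(1/K)$ scaled exit remainder, and take $\varepsilon\downarrow0$ before $K\to\infty$.

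Your remark that the linearized Riccati flow is non-expansive for $\tau\le0$ is a sharper justification than the paper's bare appeal to continuous dependence. It is what makes the estimate uniform in the matching time $-T$, which the double limit in the hypothesis requires.

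One step needs repair: the exit value of $m_\varepsilon$. During the exit stage $|a|$ ranges up to $a_{\mathrm{out}}$, so $\widetilde S$ is only $O(a_{\mathrm{out}})$ there, and $m_\varepsilon=-\tau+O(\omega^{1/3}\tau)$ does not follow.

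\begin{itemize}
\item The rate itself is wrong: a term $\widetilde S\propto a$ gives $\int|a|\,dt\approx\int da/|a|\approx\tfrac13|\log\omega|$, hence a shift of order $\omega^{1/3}|\log\omega|$ in $m$.
\item Nothing can be asserted on $\tau\le z_{\mathrm{Ai}}+1$ beyond the exit, since the trajectory leaves the fold neighbourhood there.
\end{itemize}

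The conclusion still holds, in either of two ways:
\begin{itemize}
\item Combine $|\widetilde S|\le\gamma_c/2$ with your own $O(1/K)$ bound on the scaled exit duration. This gives $m_\varepsilon(\tau_{\mathrm{exit}})=-\tau_{\mathrm{exit}}+O(1/K)+O(\omega^{1/3})$.
\item As the paper does, integrate $\varepsilon\widetilde S$ using $dt=da/\dot a$. This gives a contribution $O(\varepsilon|\log\omega|)=o(\omega^{2/3})$ to $\mu$.
\end{itemize}
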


\begin{proof}
Under the stated rescaling, the system becomes
$y_\varepsilon'=m_\varepsilon-y_\varepsilon^2+r_\varepsilon$
and $m_\varepsilon'=-1+s_\varepsilon$, where
\[
r_\varepsilon(y,m)
=
\omega^{-2/3}
R(\omega^{1/3}y,\omega^{2/3}m,\varepsilon),
\qquad
s_\varepsilon(y,m)
=
\gamma_c^{-1}
\widetilde S(\omega^{1/3}y,\omega^{2/3}m,\varepsilon).
\]
The remainder assumptions imply that
$r_\varepsilon,s_\varepsilon\to0$ uniformly on compact sets.
Since $m_\varepsilon(0)=0$, it follows that
$m_\varepsilon(\tau)\to-\tau$. The limiting activity equation is
therefore $y'=-\tau-y^2$.

Writing $y=u'/u$ gives $u''+\tau u=0$. The tracking condition
selects $u(\tau)=\operatorname{Ai}(-\tau)$, and hence
\(y_0(\tau)=-\operatorname{Ai}'(-\tau)/\operatorname{Ai}(-\tau)\).
Continuous dependence gives $y_\varepsilon\to y_0$ uniformly on
compact subsets of $(-\infty,z_{\mathrm{Ai}})$.

To identify the exit time, fix $L>0$ and let $\tau_L$ be the
unique time satisfying $y_0(\tau_L)=-L$. Since
$y_0(\tau)\to-\infty$ as $\tau\uparrow z_{\mathrm{Ai}}$, one has
$\tau_L\uparrow z_{\mathrm{Ai}}$ as $L\to\infty$. If
$\tau_{\varepsilon,L}$ denotes the first scaled time at which
$y_\varepsilon=-L$, compact convergence gives
$\tau_{\varepsilon,L}\to\tau_L$ for every fixed $L$.

After this time, the trajectory passes from
$a=-L\omega^{1/3}$ to $a=-a_{\mathrm{out}}$. For $L$ sufficiently
large and $\varepsilon$ sufficiently small, comparison with
$\dot a=-a^2$ gives
\[
0
\leq
\omega^{1/3}
\bigl(
t_{\mathrm{exit}}
-t_c-\omega^{-1/3}\tau_{\varepsilon,L}
\bigr)
\leq
\frac{C}{L},
\]
where $C$ is independent of $L$ and $\varepsilon$. Taking first
$\varepsilon\downarrow0$ and then $L\to\infty$ yields
$\omega^{1/3}(t_{\mathrm{exit}}-t_c)\to z_{\mathrm{Ai}}$, proving
\eqref{eq:exit-time}.

Finally, integration of \eqref{eq:dynamic-fold-mu} gives
\[
\mu(t_{\mathrm{exit}})
=
-\omega(t_{\mathrm{exit}}-t_c)
+
\varepsilon
\int_{t_c}^{t_{\mathrm{exit}}}
\widetilde S(a(s),\mu(s),\varepsilon)\,ds .
\]
During the scaled bottleneck the integral contributes $O(\varepsilon)$.
During the subsequent fast exit, comparison with
$\dot a=-a^2(1+o(1))$ gives an additional contribution
$O(\varepsilon|\log\omega|)$. Since $\gamma_c$ is fixed,
$\varepsilon|\log\omega|=o(\omega^{2/3})$. Consequently,
\(\varepsilon\int_{t_c}^{t_{\mathrm{exit}}}\widetilde S\,ds
=o(\omega^{2/3})\),
and \eqref{eq:exit-mu} follows from \eqref{eq:exit-time}.
\end{proof}

\subsection{Transfer to the occupancy system: a directional estimate}
\label{sec:directional-transfer}

The Euclidean one-sided-growth criterion of
Theorem~\ref{thm:quasi-steady-validity}\textup{(ii)} is useful as a
general sufficient condition, but it is too strong for a transverse
adaptive fold. This can be seen directly for the sigmoidal prototype.
At the fold, the Jacobian of the reduced $(A,B)$ system is
\begin{equation}
J_{\mathrm f}
=
\begin{pmatrix}
0 & -\lambda\rho/\gamma\\
\varepsilon\Psi_A(A_{\mathrm f},C_{\mathrm f}) & -\varepsilon
\end{pmatrix}.
\label{eq:fold-jacobian-sigmoid}
\end{equation}
The least Euclidean one-sided Lipschitz rate is the largest eigenvalue
of the symmetric part of $J_{\mathrm f}$, namely
\begin{equation}
\ell_{\mathrm f}
=
\frac{-\varepsilon+
\sqrt{\varepsilon^2+
\left(-\lambda\rho/\gamma
+\varepsilon\Psi_A(A_{\mathrm f},C_{\mathrm f})\right)^2}}{2}
\longrightarrow
\frac{\lambda\rho}{2\gamma}>0.
\label{eq:euclidean-rate-obstruction}
\end{equation}
Thus an all-direction Euclidean logarithmic-norm estimate accumulates
over the diverging fold-passage interval and cannot provide a uniform
bound. The reduction can nevertheless be transferred because occupancy
error enters only the activity equation and is exponentially localized
near the initial time. The relevant estimate is therefore directional.

To state the result, write the full sigmoidal activity map as
\begin{equation}
\Phi(X,A,B)
=
\sigma\bigl(H(X;C)+\gamma A-\rho B\bigr),
\qquad
H(X_{\mathrm{qs}}(C);C)=h(C),
\label{eq:full-sigmoid-map}
\end{equation}
and assume that $H$ is locally Lipschitz in $X$, uniformly on the
parameter interval considered. The occupancy perturbation of the
downstream vector field then has the form
\begin{equation}
p_\eta(t)=p_{\eta,A}(t)(1,0)^T,
\qquad
|p_{\eta,A}(t)|\leq P e^{-\kappa t/\eta}
\label{eq:directional-occupancy-forcing}
\end{equation}
for a constant $P>0$.

\begin{proposition}[Model-specific transfer of the fold-delay laws]
\label{prop:model-specific-transfer}
Assume fast occupancy relaxation, the sigmoidal model
\eqref{eq:full-sigmoid-map}, and the adaptive law
\eqref{eq:Psi} with \(\kappa_A>0\). Assume also the explicit fold and transverse-crossing
conditions of Propositions~\ref{prop:explicit-sigmoid-fold} and
\ref{prop:explicit-transverse-fold}. Full and reduced solutions are
started with the same downstream state.

In the frozen-ghost regime, if
\[
\frac{\varepsilon}{\delta^{3/2}}\longrightarrow0,
\qquad
\frac{\eta}{\delta}\longrightarrow0,
\]
then the full-system passage time satisfies
\begin{equation}
T_{\delta,\varepsilon}^{\eta}
\sim\frac{\pi}{\sqrt\delta}.
\label{eq:full-frozen-directional}
\end{equation}

In the transverse dynamic-fold regime, suppose that the common initial
point lies on a fixed incoming section of the upper attracting branch,
at positive distance from the fold. Set
\(\omega=\varepsilon\gamma_c\), assume
\begin{equation}
\omega\longrightarrow0,
\qquad
\frac{\eta}{\omega^{2/3}}\longrightarrow0,
\label{eq:directional-scale-condition}
\end{equation}
and let $(t_c^\eta,t_{\mathrm{exit}}^\eta)$ be the corresponding
full-system crossing and exit times. Then
\begin{align}
t_{\mathrm{exit}}^\eta-t_c^\eta
&=
z_{\mathrm{Ai}}\omega^{-1/3}
+o\bigl(\omega^{-1/3}\bigr),
\label{eq:full-dynamic-directional-time}\\
\mu^\eta(t_{\mathrm{exit}}^\eta)
&=
-z_{\mathrm{Ai}}\omega^{2/3}
+o\bigl(\omega^{2/3}\bigr).
\label{eq:full-dynamic-directional-mu}
\end{align}
\end{proposition}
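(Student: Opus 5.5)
The plan is to exploit the directional structure \eqref{eq:directional-occupancy-forcing}: the occupancy error forces only the $A$-equation, and only on an initial layer of width $O(\eta|\log\eta|)$. I would split the comparison interval into this initial boundary layer and the subsequent passage, on which full and reduced trajectories satisfy the \emph{same} autonomous reduced system up to an exponentially small remainder. Thus the whole effect of occupancy reduces to a perturbation of the initial downstream state entering the fold neighbourhood. The reduced delay laws are then applied to the perturbed initial data: Proposition~\ref{prop:frozen-ghost-law} in the frozen regime, and Theorem~\ref{thm:adaptive-delay} with Proposition~\ref{prop:explicit-transverse-fold} in the transverse regime. The task is to show that this perturbation is below the resolution scale of each law, namely $o(\delta)$ in $\mu$, respectively $o(\omega^{2/3})$ in $\mu$ and $o(\omega^{1/3})$ in $a$.

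\textbf{Step 1 (boundary layer).} On $[0,t_\eta]$ with $t_\eta=K_1\eta|\log\eta|$, Theorem~\ref{thm:quasi-steady-validity}(i) gives $|A^\eta-\bar A|+|B^\eta-\bar B|\le C\eta$ uniformly. The $B$-error is in fact $O(\varepsilon\eta)$, since $B$ is forced only through $\Psi(A^\eta)-\Psi(\bar A)$ with prefactor $\varepsilon$. This is the directional gain. For $t\ge t_\eta$ the forcing satisfies $|p_{\eta,A}|\le P\eta^{\kappa K_1}$, which is negligible for large $K_1$.

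\textbf{Step 2 (frozen regime).} At $t_\eta$ the full state is $O(\eta)$-close to the reduced one. By construction $\mu$ is affine in $B$, and in Proposition~\ref{prop:explicit-transverse-fold} $\mu=(k_{\mathrm f}\lambda/\gamma)\Delta$. Hence the $\mu$-offset is $O(\varepsilon\eta)$, and the $a$-offset is $O(\eta)$. Because $\eta=o(\delta)$, the full trajectory therefore starts the bottleneck with $\mu=-\delta+o(\delta)$ and $a$ near $M$. The bootstrap and scaling argument of Proposition~\ref{prop:frozen-ghost-law} tolerates an $o(\delta)$ initial perturbation in $\mu$ together with an added forcing of size $O(\eta^{\kappa K_1})=o(\delta^{3/2})$ in $\dot a$. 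Concretely, the rescaled equation is still $dy/ds=-(1+y^2)+o(1)$, which gives \eqref{eq:full-frozen-directional}.

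\textbf{Step 3 (transverse regime).} The common initial point lies on a fixed incoming section at positive distance from the fold. There the upper branch is normally hyperbolic and attracting with an $O(1)$ contraction rate in the $A$-direction. I would use a Fenichel-type argument: the $O(\eta)$ $A$-offset created in Step 1 decays exponentially toward the slow manifold, and the $B$-offset stays $O(\varepsilon\eta)$ relative to the reduced trajectory. Only a time shift of the slow motion survives, and this shift is absorbed by using the full-system crossing time $t_c^\eta$ rather than $t_c$. The key point is that the Euclidean obstruction \eqref{eq:euclidean-rate-obstruction} is avoided, because no $A$-error is injected near the fold.

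It remains to verify the tracking hypothesis of Theorem~\ref{thm:adaptive-delay} for the full trajectory. In the scaled variables, the residual perturbation of $y$ at entry to the inner region is $\omega^{-1/3}$ times an exponentially small quantity. The perturbation of $m$ is $\omega^{-2/3}O(\varepsilon\eta)=o(1)$, using $\eta=o(\omega^{2/3})$. The late forcing contributes $\omega^{-2/3}P\eta^{\kappa K_1}\to0$ to $r_\varepsilon$. Theorem~\ref{thm:adaptive-delay} then yields \eqref{eq:full-dynamic-directional-time}--\eqref{eq:full-dynamic-directional-mu} verbatim.

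\textbf{Main obstacle.} The hardest step is the quantitative tracking near the fold in Step 3. The attraction rate of the upper branch degenerates like $\sqrt{\mu}$ as the fold is approached, so I must confirm that the $\eta$-offset has been damped before the scaled inner zone $\mu\sim\omega^{2/3}$ is reached. My first approach would use the slow-manifold coordinate $a-a_s(\mu)$ with $\dot{}\,(a-a_s)\approx-2a_s(a-a_s)$. Integrating $\int 2\sqrt{\mu}\,dt\gtrsim\mu^{3/2}/\omega$ from $\mu=O(1)$ down to $\mu=T\omega^{2/3}$ gives a decay factor at least $\exp(-c/\omega)$ from the outer part. This factor is ample, so the offset is $o(\omega^{1/3})$ at entry, as required by the matching condition $|y_\varepsilon(-T)-\sqrt T|\to0$.
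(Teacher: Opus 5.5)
Your frozen-regime argument is essentially the paper's: an initial layer producing $O(\eta)$ in $a$ and $O(\varepsilon\eta)$ in $\mu$, negligible residual forcing in the $\sqrt\delta$-scaled equation, and then the comparison argument of Proposition~\ref{prop:frozen-ghost-law}. In the transverse regime your route is genuinely different, and it is correct.

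The paper compares full and reduced trajectories all the way to the Airy section $\Delta=L\omega^{2/3}$. It writes the mean-value error system for $(e_A,e_B)$. Because the Euclidean rate \eqref{eq:euclidean-rate-obstruction} stays positive, it then uses the weighted energy $e_A^2+(q_0/\varepsilon)e_B^2$, whose cross terms cancel at leading order, together with a kernel bound giving \eqref{eq:directional-error-A}. Continuous dependence then sends both trajectories to the same Airy solution, with $t_c^\eta-t_c=O(\eta)$.

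You never compare near the fold. After the layer, the full trajectory is a reduced trajectory (up to negligible forcing) started $O(\eta)$ off the attracting branch. It is drawn exponentially close to the invariant one-dimensional slow manifold. The leftover $O(\varepsilon\eta)$ slow discrepancy is a pure time shift, absorbed into $t_c^\eta$, and Theorem~\ref{thm:adaptive-delay} is applied to the full trajectory itself. This is shorter and makes the choice of norm irrelevant. It yields exactly what the statement asks. The paper's estimate additionally gives equal-time error bounds and the crossing-time shift.

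Two points need care.
\begin{itemize}
\item Your exponentially small $y$-perturbation is measured from the invariant slow-manifold graph at equal $\mu$. Relative to $\bar a$ at equal time it is of order $\varepsilon\eta\,\omega^{-2/3}$, which is the second term of \eqref{eq:directional-error-A}; this is still $o(1)$, but you should not mix the two frames.
\item In your main-obstacle step, $a_s$ must be that invariant manifold, not the frozen branch $\sqrt\mu$. Measured from $\sqrt\mu$, the offset $w=a-\sqrt\mu$ obeys $\dot w\approx-2\sqrt\mu\,w+\omega/(2\sqrt\mu)$. It therefore settles at $O(\omega/\mu)$, i.e.\ $O(1/T)$ in $y$ at $\mu=T\omega^{2/3}$. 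This is harmless for the tracking hypothesis but not exponentially small.
\end{itemize}

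Finally, what neutralizes \eqref{eq:euclidean-rate-obstruction} in your argument is this invariance/time-shift structure, not merely the absence of $A$-forcing near the fold. The $O(\varepsilon\eta)$ $B$-error does reach the fold and feeds $e_A$ through the $\partial_B\widehat F$ coupling.
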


\begin{proof}
Let
\(e_A=A^\eta-\bar A\) and
\(e_B=B^\eta-\bar B\).

In the frozen regime, the smooth fold-coordinate transformation maps
the occupancy term to a scalar forcing \(r_\eta(t)\) satisfying
\(\int_0^\infty |r_\eta(t)|\,dt\leq C\eta\). Choose
\(t_b=2\eta|\log\delta|/\kappa\). During
\([0,t_b]\), variation of constants on the fixed entry section gives an
\(O(\eta)\) displacement in \(a\), an \(O(\varepsilon\eta)\)
displacement in the slow coordinate, and
\(\sqrt\delta\,t_b=o(1)\). For \(t\geq t_b\),
\(|r_\eta(t)|\leq C\delta^2\). Thus, after the scaling
\(a=\sqrt\delta\,y\), \(s=\sqrt\delta\,t\), the boundary layer changes
the initial value of \(y\) by
\(O(\eta/\sqrt\delta)=o(1)\), whereas the remaining forcing is uniformly
\(o(1)\) in the scaled equation. The slow drift over the passage is
\(O(\varepsilon\delta^{-1/2})=o(\delta)\), exactly as in
Proposition~\ref{prop:frozen-ghost-law}. Its scaled comparison argument
therefore applies with perturbed initial data and proves
\eqref{eq:full-frozen-directional}.

We next establish the estimates needed in the dynamic regime. Put
\(\Delta(t)=h(C_{\mathrm f})-\rho\bar B(t)-q_{\mathrm f}\), fix a
large \(L>1\), and let \(t_L\) be the first reduced time for which
\(\Delta(t_L)=L\omega^{2/3}\). The strict inequality
\(D_{\mathrm f}>0\) and normal hyperbolicity of the upper branch imply
that, after shrinking a fixed fold neighbourhood, there are positive
constants \(d_0,d_1,c_0,c_1\), independent of
\(\varepsilon,\eta\), such that
\[
d_0\varepsilon\leq-\dot\Delta(t)\leq d_1\varepsilon,
\qquad
c_0\sqrt{\Delta(t)}
\leq-\partial_A\widehat F_{\mathrm{sig}}(\bar A,\bar B,C_{\mathrm f})
\leq c_1\sqrt{\Delta(t)}
\]
before \(t_L\). These constants may be chosen uniformly for initial
points in a compact subset of the fixed incoming section.

Apply the mean-value theorem on the segment joining the full and reduced
downstream states. On any sufficiently small bootstrap tube about the
upper branch, the error equations have the exact form
\[
\dot e_A=-m_\eta(t)e_A-\beta_\eta(t)e_B+p_{\eta,A}(t),
\qquad
\dot e_B=\varepsilon\bigl(\zeta_\eta(t)e_A-e_B\bigr),
\]
where \(m_\eta\asymp\sqrt\Delta\), and
\(0<\beta_0\leq\beta_\eta\leq\beta_1\) and
\(0<\zeta_0\leq\zeta_\eta\leq\zeta_1\). Here positivity follows from
\(-\partial_B\widehat F_{\mathrm{sig}}>0\) and
\(\Psi_A>0\). On the outer portion
\(\Delta\geq\Delta_0>0\), this system is uniformly normally
hyperbolic. Duhamel's formula, using
\eqref{eq:directional-occupancy-forcing}, therefore gives at entry into
the fixed fold neighbourhood
\[
|e_A|\leq C\varepsilon\eta,
\qquad
|e_B|\leq C\varepsilon\eta.
\]
The factor \(\varepsilon\) in the second estimate is the projection of
the activity impulse through the equation for \(B\); the fast component
of \(e_A\) has decayed before this entry.

For completeness, the estimate remains uniform as normal hyperbolicity
weakens. Let
\(\beta_{\mathrm f}=-\partial_B\widehat F_{\mathrm{sig}}
(A_{\mathrm f},B_{\mathrm f},C_{\mathrm f})\),
\(\zeta_{\mathrm f}=\Psi_A(A_{\mathrm f},C_{\mathrm f})\), and
\(q_0=\beta_{\mathrm f}/\zeta_{\mathrm f}>0\). Smoothness and the fold
expansion give
\(|q_0\zeta_\eta-\beta_\eta|\leq C m_\eta\). For the directional
energy
\(\mathcal E_{\mathrm d}=e_A^2+(q_0/\varepsilon)e_B^2\), direct differentiation
along the error system yields
\[
\dot{\mathcal E}_{\mathrm d}
=-2m_\eta e_A^2-2q_0e_B^2
+2(q_0\zeta_\eta-\beta_\eta)e_Ae_B
+2e_Ap_{\eta,A}.
\]
Shrinking the fold neighbourhood absorbs the mixed term. The occupancy
forcing is already exponentially small on this part of the trajectory,
so integration gives
\(\mathcal E_{\mathrm d}(t)\leq C\varepsilon\eta^2\) for \(t\leq t_L\).
Consequently,
\begin{align}
|e_B(t)|
&\leq C\varepsilon\eta,
\label{eq:directional-error-B}\\
|e_A(t)|
&\leq
C\eta
\exp\left(-c\int_0^t\sqrt{\Delta(s)}\,ds\right)
+C\frac{\varepsilon\eta}
{\sqrt{\Delta(t)+\omega^{2/3}}}.
\label{eq:directional-error-A}
\end{align}
Indeed, the first estimate follows from the energy bound. Substitution
of that estimate into the integrating-factor formula for \(e_A\) gives
\[
|e_A(t)|\leq C\eta
e^{-c\int_0^t\sqrt{\Delta(r)}\,dr}
+C\varepsilon\eta
\int_0^t e^{-c\int_s^t m_\eta(r)\,dr}\,ds .
\]
Since \(-\dot\Delta\asymp\varepsilon\), one has
\(|\dot m_\eta|\leq C\varepsilon/m_\eta\). On
\(\Delta\geq L\omega^{2/3}\), choosing \(L\geq L_0\) makes
\(\varepsilon/m_\eta^3\leq C L^{-3/2}\) small. Splitting the last
integral at the time when \(m_\eta(s)=2m_\eta(t)\), or equivalently
integrating by parts, gives the uniform kernel bound
\[
\int_0^t e^{-c\int_s^t m_\eta(r)\,dr}\,ds
\leq\frac{C}{m_\eta(t)}
\leq\frac{C}{\sqrt{\Delta(t)+\omega^{2/3}}},
\]
which proves \eqref{eq:directional-error-A}. The derived bounds imply
\(e_A=o(\sqrt\Delta)\) and \(e_B=o(\Delta)\) throughout the tube under
\eqref{eq:directional-scale-condition}; a standard continuation argument
therefore closes the bootstrap. This also proves that all constants in
\eqref{eq:directional-error-B}--\eqref{eq:directional-error-A} are
independent of \(\varepsilon,\eta\), of the initial point in the stated
compact incoming section, and of \(L\geq L_0\) while
\(L\omega^{2/3}\leq\Delta_0\).

At an incoming Airy section
\(\Delta=L\omega^{2/3}\), the coordinate transformation
\eqref{eq:explicit-normal-form-coordinates} and the preceding bounds
give
\begin{equation}
\frac{|a^\eta-\bar a|}{\omega^{1/3}}
+
\frac{|\mu^\eta-\bar\mu|}{\omega^{2/3}}
\leq
C\frac{\varepsilon\eta}{\omega^{2/3}}+o(1)
=o(1),
\label{eq:directional-inner-entry}
\end{equation}
The exponentially decaying term in
\eqref{eq:directional-error-A} is \(o(\omega^N)\) for every fixed
\(N>0\), because the drift bounds imply
\(\int_0^{t_L}\sqrt{\Delta(r)}\,dr\geq c/\varepsilon\). Equation
\eqref{eq:directional-scale-condition} now proves the final equality in
\eqref{eq:directional-inner-entry}. Continuous dependence from this
incoming section to the scaled crossing section propagates
\(|\mu^\eta-\bar\mu|=O(\varepsilon\eta)\). Hence
\(|t_c^\eta-t_c|\leq C|\mu^\eta-\bar\mu|/\omega=O(\eta)\), so the
crossing-time shift is \(o(\omega^{-1/3})\). The same
continuous-dependence argument shows that the full and reduced
trajectories converge to the same Airy tracking solution on every
compact subset of \(( -\infty,z_{\mathrm{Ai}})\). The exit-section
comparison used in Theorem~\ref{thm:adaptive-delay} then gives
\eqref{eq:full-dynamic-directional-time} and
\eqref{eq:full-dynamic-directional-mu}.
\end{proof}

\begin{remark}
The directional calculation is essential. Equation
\eqref{eq:euclidean-rate-obstruction} shows that a uniform Euclidean
all-direction growth bound is incompatible with a transverse adaptive
fold when $\partial_B\widehat F\neq0$. In contrast,
\eqref{eq:directional-error-B}--\eqref{eq:directional-inner-entry}
use the fact that the occupancy boundary layer forces only the fast
activity component; its projection onto the slow adaptive direction is
only $O(\varepsilon\eta)$.
\end{remark}

The two asymptotic regimes suggest the dimensionless ratio
\begin{equation}
\chi
=
\frac{\varepsilon\gamma_c}{\delta^{3/2}}.
\label{eq:ghost-crossover}
\end{equation}
When $\delta$ is interpreted as the distance from the frozen fold
along a common family of entry trajectories, $\chi$ provides an
order-of-magnitude indicator of the crossover between static
ghost escape and adaptive drift. It is not, by itself, a uniform
transition law for $\chi=O(1)$.

When $\chi\ll1$, adaptation changes negligibly during the
bottleneck passage and
$T_{\delta,\varepsilon}\sim\pi/\sqrt\delta$. When
$\chi=O(1)$, static escape and adaptive drift act on comparable
scales. When $\chi\gg1$ and the trajectory has tracked the
attracting branch before crossing the fold, the dynamic scale
$O((\varepsilon\gamma_c)^{-1/3})$ becomes relevant.

If $\gamma_c=0$, the adaptive vector field does not cross the
frozen fold transversely at leading order. This includes a fold
point lying on the adaptive nullcline $B=\Psi(A,C)$. The Airy
scaling then does not apply directly; the behaviour is determined
by higher-order drift terms or by the coupled-equilibrium
analysis of the preceding subsection.

Accordingly, the residence time entering the viability model
should be written as $T(C,\varepsilon)$. The frozen approximation
$T(C,\varepsilon)\sim\pi/\sqrt{\delta(C)}$ applies when
$\varepsilon\gamma_c\ll\delta(C)^{3/2}$. Near
$\delta(C)=O((\varepsilon\gamma_c)^{2/3})$, the adaptive drift
must be retained.

\section{From signalling activity to viability}
\label{sec:viability}
The preceding sections describe how treatment modifies signalling activity
through fast receptor occupancy, frozen-fold ghost dynamics, and slow
adaptive passage. In particular, Section~\ref{sec:coupled-fold} shows that
the resulting residence time should generally be regarded as
\(T(C,\varepsilon)\), with different leading behaviour depending on the
relative sizes of the distance from the fold and the adaptive timescale.
We now translate these signalling dynamics into a phenotypic response.
Because cell viability reflects the accumulated effect of signalling over
an observation interval, rather than receptor occupancy or instantaneous
activity alone, transient residence near the fold can remain visible in
viability measurements even after the occupancy variables have relaxed.

Let $N(t;C)$ denote the viable cell mass under inhibitor concentration $C$. We assume
\begin{equation}
\frac{dN}{dt}=G(A(t;C))N,
\label{eq:N}
\end{equation}
where $G\in C^2([0,1])$ is increasing, reflecting the interpretation that larger signalling activity
supports survival or proliferation. Let the untreated control satisfy
\begin{equation}
\frac{dN_{\mathrm{ctrl}}}{dt}=G(A_{\mathrm{ctrl}}(t))N_{\mathrm{ctrl}}.
\label{eq:Nctrl}
\end{equation}
Define viability relative to control by
\begin{equation}
V(t;C)=\frac{N(t;C)}{N_{\mathrm{ctrl}}(t)}.
\label{eq:Vdef}
\end{equation}

Assume that the treated and control populations have the same initial
normalization, \(N(0;C)=N_{\mathrm{ctrl}}(0)>0\),
so that \(V(0;C)=1\). From \eqref{eq:N}, the corresponding control
equation, and \eqref{eq:Vdef}, we obtain
\(\frac{d}{dt}\log V(t;C)=G\bigl(A(t;C)\bigr)
-G\bigl(A_{\mathrm{ctrl}}(t)\bigr)\).
Integrating from \(0\) to \(t\) gives
\begin{equation}
\log V(t;C)
=
\int_0^t
\left[
G\bigl(A(s;C)\bigr)
-
G\bigl(A_{\mathrm{ctrl}}(s)\bigr)
\right]\,ds.
\label{eq:Vintegral}
\end{equation}
Equivalently,
\[
V(t;C)
=
\exp\!\left\{
\int_0^t
\left[
G\bigl(A(s;C)\bigr)
-
G\bigl(A_{\mathrm{ctrl}}(s)\bigr)
\right]\,ds
\right\}.
\]

Equation~\eqref{eq:Vintegral} shows that viability is controlled by the
time-integrated activity gap, rather than by receptor occupancy alone.
This is where ghost-mediated delay and adaptive overshoot enter the
phenotype.

\subsection{Mechanism for the shoulder}

Suppose first that the treated dynamics are governed near threshold by the ghost equation
\eqref{eq:ghost} with concentration-dependent distance $\delta(C)>0$, where $\delta'(C)>0$. Let the
initial condition be fixed at $a(0)=a_0>0$, and define an effective escape time $T(C)$ as the time
required to leave a neighbourhood of the ghost region.

The following result is stated in the frozen-adaptation regime
\(\varepsilon\ll\delta(C)^{3/2}\),
in which the slow drift of \(B\) is asymptotically negligible during the
ghost passage. By Section~\ref{sec:coupled-fold}, the coupled residence
time \(T(C,\varepsilon)\) is then represented to leading order by the
frozen residence time \(T_{\delta(C)}\). For notational simplicity, we
write \(T(C)=T_{\delta(C)}\) in this regime.

\begin{theorem}[Ghost-induced shoulder]
Let \(T(C)=T_{\delta(C)}\) denote the residence time for passage from
\(a=M\) to \(a=-M\), as defined in Proposition~\ref{prop:ghost-residence-time}. Then \(T(C)\) is strictly
decreasing in \(C\), and
\[
T(C)\sim\frac{\pi}{\sqrt{\delta(C)}}
\qquad
\text{as }\delta(C)\downarrow0.
\]

Fix an observation time \(t>0\). Assume that trajectories which remain in
the ghost region have approximately the same growth rate as the untreated
control: for some \(r\ge0\),
\[
\left|
G(A(s;C))-G(A_{\mathrm{ctrl}}(s))
\right|
\le r,
\qquad
0\le s\le \min\{t,T(C)\}.
\]
If \(T(C)\ge t\), then
\[
e^{-rt}\le V(t;C)\le e^{rt}.
\]
Hence \(V(t;C)\approx1\) whenever \(rt\ll1\).

Assume further that, after escape from the ghost region, the treated
growth rate lies uniformly below the control growth rate: for some
\(g_0>0\),
\[
G(A(s;C))-G(A_{\mathrm{ctrl}}(s))
\le -g_0,
\qquad
T(C)\le s\le t.
\]
If \(T(C)<t\), then
\[
V(t;C)
\le
\exp\!\left[
rT(C)-g_0\bigl(t-T(C)\bigr)
\right].
\]

Consequently, if \(T(C)\ge t\) and \(rt\ll1\), viability remains close
to its control value. If \(T(C)<t\) and $g_0\bigl(t-T(C)\bigr)-rT(C)\gg1$,
then the displayed upper bound is exponentially small and viability is
substantially reduced. In particular, this occurs when \(T(C)\ll t\),
\(g_0t\gg1\), and \(rT(C)\) is negligible. Because \(T(C)\) decreases
with concentration, the transition occurs over the concentration range
in which \(T(C)\approx t\), producing a dose--response shoulder.
\end{theorem}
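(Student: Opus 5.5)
The argument splits into three independent parts: monotonicity of the residence time, the two-sided bound inside the ghost, and the upper bound after escape. Each rests on the explicit formula of Proposition~\ref{prop:ghost-residence-time} or on the integral representation \eqref{eq:Vintegral}.

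\emph{Residence time.} By Proposition~\ref{prop:ghost-residence-time},
\[
T(C)=\frac{2}{\sqrt{\delta(C)}}\arctan\!\left(\frac{M}{\sqrt{\delta(C)}}\right).
\]
To show that $T(C)$ is strictly decreasing in $C$, I will not differentiate the closed form. Instead I use the integral $T_\delta=\int_{-M}^{M}(a^2+\delta)^{-1}\,da$. Its integrand is strictly decreasing in $\delta$ for every $a$, so $\delta\mapsto T_\delta$ is strictly decreasing. Composing with the hypothesis $\delta'(C)>0$, which makes $\delta(C)$ strictly increasing, gives strict monotonicity of $T(C)$. The asymptotic $T(C)\sim\pi/\sqrt{\delta(C)}$ is exactly the conclusion of Proposition~\ref{prop:ghost-residence-time} with $\delta=\delta(C)$.

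\emph{Viability bounds.} Both bounds come from \eqref{eq:Vintegral}, which writes $\log V(t;C)$ as the integral over $[0,t]$ of the growth-rate gap $G(A(s;C))-G(A_{\mathrm{ctrl}}(s))$.
\begin{enumerate}
\item If $T(C)\ge t$, then $\min\{t,T(C)\}=t$, so the gap is bounded by $r$ in absolute value on all of $[0,t]$. This gives $|\log V(t;C)|\le rt$, and exponentiating yields $e^{-rt}\le V(t;C)\le e^{rt}$. If $rt\ll1$, both bounds are $1+O(rt)$, so $V(t;C)\approx1$.
\item If $T(C)<t$, split the integral at $s=T(C)$. On $[0,T(C)]$ the ghost-region hypothesis bounds the integrand above by $r$. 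On $[T(C),t]$ the post-escape hypothesis bounds it above by $-g_0$. Adding gives
\[
\log V(t;C)\le rT(C)-g_0\bigl(t-T(C)\bigr),
\]
and exponentiating gives the stated upper bound.
\end{enumerate}

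\emph{Consequences.} When $g_0(t-T(C))-rT(C)\gg1$, the exponent in the upper bound is large and negative, so the bound is exponentially small. The special case follows directly: if $T(C)\ll t$, then $t-T(C)\approx t$, so $g_0(t-T(C))$ is large because $g_0t\gg1$, while $rT(C)$ is negligible by assumption. For the shoulder, the monotonicity of $T$ means the set $\{C:T(C)\ge t\}$ is an interval of low concentrations, on which viability stays near control. Above it, $T(C)<t$ and viability is suppressed once $t-T(C)$ is large. The transition is therefore localized where $T(C)\approx t$.

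\emph{Main obstacle.} None of these steps is technical; the only delicate point is interpretive. The final "shoulder" sentence is a qualitative reading of the two bounds, not a sharp transition law. I will state it precisely as a dichotomy between the regimes $T(C)\ge t$ and $T(C)\ll t$, separated by the level set $T(C)=t$. I will also note that the hypotheses on $r$ and $g_0$ are assumed along the actual trajectories, so no ghost-dynamics estimate beyond Proposition~\ref{prop:ghost-residence-time} is needed. This rests on the frozen-adaptation regime $\varepsilon\ll\delta(C)^{3/2}$, in which Proposition~\ref{prop:frozen-ghost-law} identifies $T(C,\varepsilon)$ with $T_{\delta(C)}$ to leading order.
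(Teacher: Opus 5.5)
Your proposal is correct and follows the paper's proof essentially step for step. Monotonicity and the asymptotic law come from the explicit residence-time formula together with $\delta'(C)>0$, and both viability bounds come from \eqref{eq:Vintegral}, with the integral split at $T(C)$ when $T(C)<t$. Using the integral form $\int_{-M}^{M}(a^2+\delta)^{-1}\,da$ rather than the arctan closed form to get strict monotonicity is a harmless and slightly cleaner way of making the paper's one-line justification explicit.
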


\begin{proof}
The monotonicity and asymptotic formula follow from the explicit expression
for \(T(C)\) and the assumption \(\delta'(C)>0\).

If \(T(C)\ge t\), then \eqref{eq:Vintegral} and the bound on the growth-rate
gap give \(-rt\le\log V(t;C)\le rt\),
which proves the first estimate.

If \(T(C)<t\), split the viability integral at the escape time:
\[
\log V(t;C)
=
\int_0^{T(C)}
\left[
G(A(s;C))-G(A_{\mathrm{ctrl}}(s))
\right]ds
+
\int_{T(C)}^t
\left[
G(A(s;C))-G(A_{\mathrm{ctrl}}(s))
\right]ds.
\]
The two assumptions give
\(\log V(t;C)\le rT(C)-g_0\bigl(t-T(C)\bigr)\),
and exponentiation gives the stated bound.
\end{proof}

The shoulder is therefore a consequence of time-scale separation and critical slowing in the
downstream activity system, not a consequence of a static receptor-occupancy curve. The receptor
occupancy state may already have relaxed to $X_{\mathrm{qs}}(C)$, while the signalling activity
$A(t)$ remains transiently elevated because of feedback and ghost dynamics.

\subsection{A proof-of-concept fit to experimental viability data}
\label{sec:Lkyn-fit}

To illustrate how the proposed ghost mechanism may be confronted with
experimental data, we consider the L-kynurenine viability measurements
reported in Fig.~6 of \cite{Anguelov2023}. Melanoma cells were exposed to
L-kynurenine at concentrations $C=1,2,3,4$ mM, and cell viability was
measured at observation times $t=24,48,$ and $72$ h using the crystal
violet assay. Several replicate measurements are available at each
concentration--time pair, and all reported replicate values were used in
the parameter estimation rather than first averaging the data.

The complete model developed in the previous sections contains the hidden
occupancy, signalling, and adaptation variables $X$, $A$, and $B$. These
variables were not measured in the available experiment, so the full
activity--adaptation system cannot be identified directly from viability
measurements alone. Instead, we construct a reduced observation model that
retains the principal prediction of the ghost analysis, namely that the
residence time near the vanished high-activity equilibrium scales according
to the inverse square root of the distance from the saddle-node.

As described in Section~\ref{sec:ghost}, the experimental concentration $C$
acts on the activity dynamics through the effective inhibition
\(I(C)=H\bigl(X_{\mathrm{qs}}(C)\bigr)\). Near the saddle-node, the
normal-form parameter may be written locally as
\(\mu=\alpha(B-B_c)-\beta(I-I_c)+\text{higher-order terms}\), with
\(\beta>0\).
The ghost residence time therefore scales as
\begin{equation}
T_{\mathrm g}
\sim
\frac{\pi}{\sqrt{-\mu}},
\qquad \mu<0.
\label{eq:ghost-fit-general}
\end{equation}

Since neither \(I(C)\) nor \(B(t)\) is measured in the present experiment,
the distance from the fold cannot be determined directly from the viability
data. To obtain a reduced proof-of-concept observation model, we assume that
\(B\) is approximately constant during passage through the ghost region and
that concentration unfolds the saddle-node transversely.

Let \(C_c\) denote the critical concentration at which the saddle-node
occurs. A Taylor expansion of the unfolding parameter about \(C=C_c\) gives
\begin{equation}
\mu(C)
=
\mu'(C_c)(C-C_c)
+
O\bigl((C-C_c)^2\bigr).
\label{eq:mu-concentration}
\end{equation}
On the ghost side of the fold, choose the sign convention such that
\[
-\mu(C)
=
s(C-C_c)
+
O\bigl((C-C_c)^2\bigr),
\qquad s>0,
\qquad C>C_c.
\]
Combining this relation with the inverse-square-root saddle-node scaling
gives the local approximation
\begin{equation}
T_{\mathrm g}(C)
=
\frac{\tau}{\sqrt{C-C_c}},
\qquad C>C_c,
\label{eq:fit-delay-general}
\end{equation}
where \(\tau>0\) absorbs the normal-form scaling constants.

For the initial four-parameter fit, we adopt the parsimonious working
approximation \(C_c=0\), giving
\begin{equation}
T_{\mathrm g}(C)
=
\frac{\tau}{\sqrt{C}},
\qquad C>0.
\label{eq:fit-delay}
\end{equation}
This choice reduces the dimension of the observation model. It is a
modelling convention rather than evidence that the physical saddle-node
occurs at zero inhibitor concentration; its adequacy is assessed below by
profiling \(C_c\). The untreated control \(C=0\) is used for normalization
and is not evaluated through \eqref{eq:fit-delay}.

The viability is then modelled by
\begin{equation}
V_{\mathrm{fit}}(t,C)
=
v_\infty
+
(1-v_\infty)
\exp\!\left[
-kC\bigl(t-T_{\mathrm g}(C)\bigr)_+^{\,p}
\right],
\qquad
T_{\mathrm g}(C)=\frac{\tau}{\sqrt{C}},
\qquad C>0,
\label{eq:fit-model}
\end{equation}
where \((x)_+=\max\{x,0\}\). Here $v_\infty$ denotes the residual
viability after prolonged exposure, $k$ determines the rate of viability
loss after escape from the ghost region, and the exponent $p$ allows the
post-delay decay to deviate from a pure exponential.
Equation~\eqref{eq:fit-model} should therefore be viewed as a reduced
observation model rather than as a replacement for the mechanistic
activity--adaptation system.

The parameter vector \(\theta=(v_\infty,k,p,\tau)\) contains the
dimensionless parameters \(v_\infty\) and \(p\). Since
concentration is measured in mM and time in hours, the remaining
parameter units are
\([k]=\mathrm{mM}^{-1}\mathrm{h}^{-p}\) and
\([\tau]=\mathrm{h}\sqrt{\mathrm{mM}}\).
The parameter vector \(\theta\) was estimated by nonlinear least squares.
If \(V_{ijr}\) denotes replicate \(r\) measured at concentration \(C_i\)
and observation time \(t_j\), the parameters were obtained by minimizing
\begin{equation}
\mathcal{J}(\theta)
=
\sum_{i,j,r}
\left[
V_{ijr}
-
V_{\mathrm{fit}}(t_j,C_i;\theta)
\right]^2.
\label{eq:fit-objective}
\end{equation}

The optimization was carried out using MATLAB's \texttt{fminsearch}.
Since the objective function is nonlinear, several different initial
guesses were used and the parameter vector producing the smallest value
of \eqref{eq:fit-objective} was retained. To enforce biologically
admissible parameter values, the optimization was performed in transformed
coordinates, ensuring \(0<v_\infty<0.4\), \(k>0\),
\(0.5<p<3.5\), and \(\tau>0\).

Parameter uncertainty was assessed using a concentration--time-stratified
nonparametric bootstrap. Within each of the 12 concentration--time groups,
the replicate viability measurements were sampled with replacement while
preserving the original number of observations in that group. The
four-parameter model was then refitted to each of 2000 bootstrap samples.
The 2.5th and 97.5th percentiles of the resulting parameter estimates were
used as percentile 95\% bootstrap intervals. Sensitivity to individual
observations was assessed separately by a leave-one-observation-out
jackknife.

The identifiability of the effective critical concentration was examined
by profiling \(C_c\) over \(0\le C_c<1~\mathrm{mM}\).
At each fixed value of \(C_c\), the observation model
\eqref{eq:fit-model} was evaluated using the shifted delay $T_{\mathrm g}(C)
=
\frac{\tau}{\sqrt{C-C_c}},
\qquad C>C_c$, and the remaining four parameters
\((v_\infty,k,p,\tau)\) were re-estimated. Let
\(\operatorname{SSE}(C_c)\) denote the resulting residual sum of squares. 
For purposes of the profile calculation, we adopt a working error model
in which the replicate residuals are independent Gaussian variables with
a common unknown variance. After profiling out this variance, the profile
likelihood-ratio statistic is
\[
D(C_c)
=
n\log\left(
\frac{\operatorname{SSE}(C_c)}
     {\operatorname{SSE}_{\min}}
\right),
\]
where \(n=40\) is the number of replicate observations.

Because \(C_c\) is constrained to be nonnegative, a profile optimum at
the boundary \(C_c=0\) requires a one-sided 95\% upper profile bound. The corresponding cutoff is
\(\chi^2_{1,0.90}=2.706\), equivalently the 95th percentile of the
boundary distribution
\(\frac12\chi_0^2+\frac12\chi_1^2\) \cite{SelfLiang1987}. The
approximate one-sided 95\% profile set was therefore defined by
\(D(C_c)\le2.706\).

 The fixed and estimated-\(C_c\) models were also compared using
\[
\mathrm{AIC}_c
=
n\log\left(\frac{\operatorname{SSE}}{n}\right)
+2K+\frac{2K(K+1)}{n-K-1},
\]
where \(K\) is the total number of estimated parameters, including the
residual variance. Thus \(K=5\) for the fixed-\(C_c\) model and \(K=6\)
for the estimated-\(C_c\) model. The profile was evaluated
at 201 equally spaced values of \(C_c\) over the stated interval, with
linear interpolation used to locate the upper profile crossing. To
reproduce the numerical \(\mathrm{AIC}_c\) values reported below, the
residual sum of squares in this expression was evaluated with viability
expressed in percentage points. This common rescaling does not affect the
\(\mathrm{AIC}_c\) difference between the two models.

The resulting parameter estimates were
\begin{equation}
v_\infty=0.0673,\qquad
k=8.10\times10^{-4}~\mathrm{mM}^{-1}\mathrm{h}^{-p},\qquad
p=1.743,\qquad
\tau=20.425~\mathrm{h}\sqrt{\mathrm{mM}}.
\label{eq:fit-values}
\end{equation}
Under \eqref{eq:fit-delay}, these values give effective ghost residence
times \(T_{\mathrm g}(1)=20.4\), \(T_{\mathrm g}(2)=14.4\),
\(T_{\mathrm g}(3)=11.8\), and \(T_{\mathrm g}(4)=10.2\) h.

The fit yields $R^2=0.9567$ with a root-mean-square error of approximately
$6.75$ percentage points. 

\begin{figure}[htbp]
\centering
\includegraphics[width=0.82\textwidth]{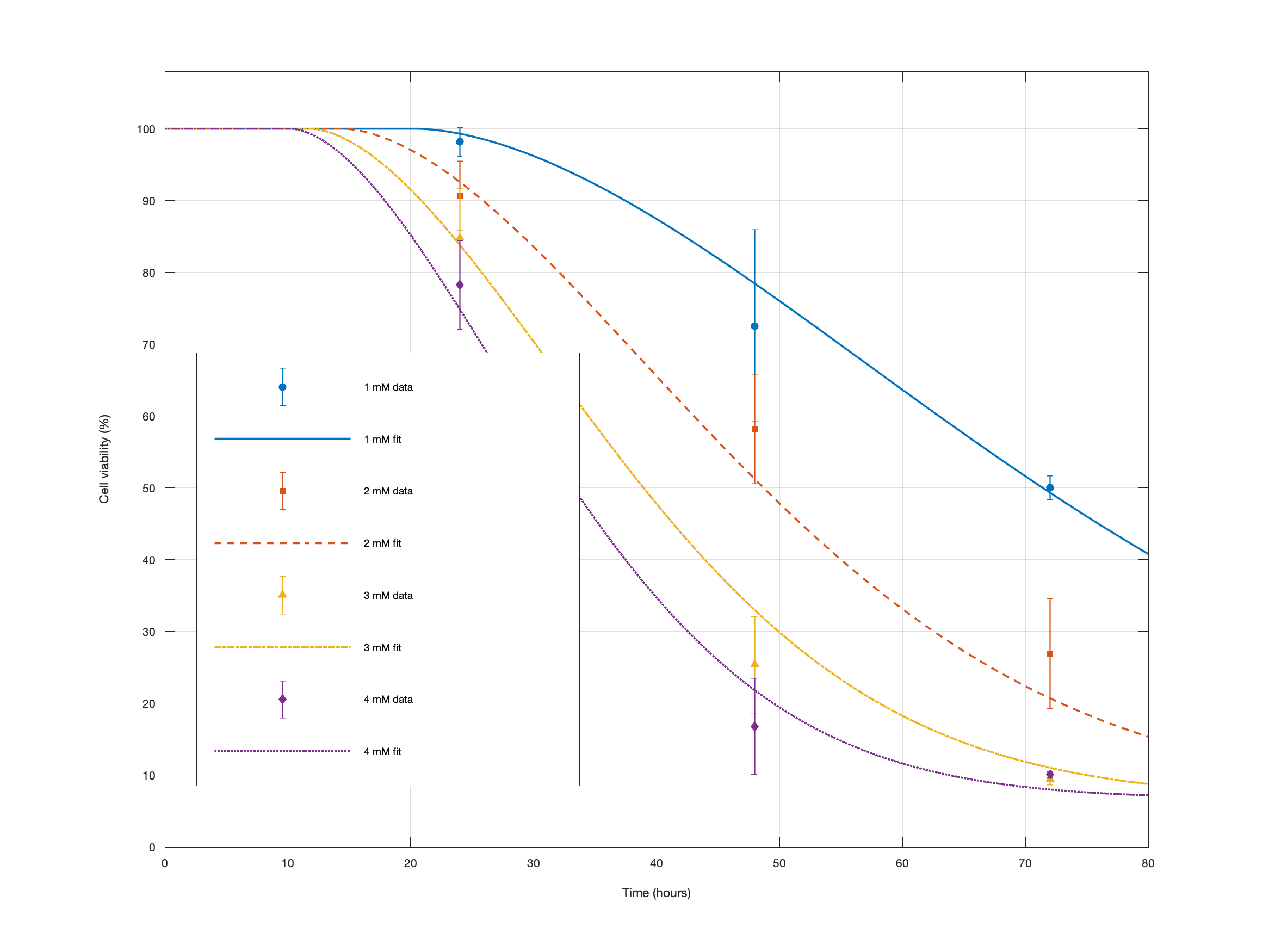}
\caption{Proof-of-concept fit to the L-kynurenine viability data reported
in \cite{Anguelov2023} using the reduced ghost-inspired observation model.
Symbols denote the experimental means with standard deviations, while the
solid curves are obtained from \eqref{eq:fit-model}. For this fit, the
effective distance from the saddle-node is assumed to vary proportionally
with inhibitor concentration, giving
$T_{\mathrm g}(C)=\tau/\sqrt{C}$.}
\label{fig:Lkyn-fit}
\end{figure}

The concentration--time-stratified bootstrap gave the 95\% intervals
\(v_\infty\in[0.0467,0.0869]\),
\(k\in[2.23\times10^{-4},2.69\times10^{-3}]
~\mathrm{mM}^{-1}\mathrm{h}^{-p}\),
\(p\in[1.466,2.033]\), and
\(\tau\in[12.25,27.89]~\mathrm{h}\sqrt{\mathrm{mM}}\).
The corresponding bootstrap coefficients of variation were \(15.4\%\),
\(65.9\%\), \(8.4\%\), and \(20.4\%\), respectively. Thus \(p\) was
comparatively stable, whereas \(k\) was the least precisely determined
individual parameter. The bootstrap estimates also exhibited strong
parameter trade-offs:
\(\operatorname{corr}(k,p)=-0.918\),
\(\operatorname{corr}(k,\tau)=0.817\), and
\(\operatorname{corr}(p,\tau)=-0.832\).
Consequently, the fitted viability curves and the qualitative
concentration dependence of the delay are more robust than the individual
estimate of \(k\). In the leave-one-observation-out analysis, the maximum
absolute changes in \(v_\infty\), \(k\), \(p\), and \(\tau\) were
\(11.6\%\), \(72.8\%\), \(6.9\%\), and \(19.7\%\), respectively, which
supports the same conclusion.

When \(C_c\) was allowed to vary, the profile optimum occurred at the
boundary \(C_c=0\). Estimating \(C_c\) produced no material reduction
in the residual sum of squares and increased the corrected Akaike
information criterion from \(164.47\) to \(167.25\), giving
\(\Delta\mathrm{AIC}_c=2.78\). The approximate one-sided 95\% profile
set was
\[
0\leq C_c\leq 0.47~\mathrm{mM}.
\]
Thus the available data do not identify a positive critical concentration,
and the profile and model-comparison results support retaining the working
approximation \(T_{\mathrm g}(C)=\tau/\sqrt{C}\).
Figure~\ref{fig:Lkyn-fit} compares the resulting fitted curves with the
experimental means and standard deviations.

The fitted curves reproduce the principal concentration-dependent timing of
the viability decline: increasing inhibitor concentration is associated,
under the approximation \eqref{eq:fit-delay}, with a shorter effective
residence time and an earlier loss of viability. This agreement should not,
however, be interpreted as direct evidence for a saddle-node bifurcation in
the underlying biological system. Since only viability was measured, the
hidden occupancy, signalling, and adaptive variables cannot be identified
uniquely, and alternative delayed-response mechanisms may generate similar
temporal behaviour.

In particular, the present data do not determine the dose--occupancy map
$X_{\mathrm{qs}}(C)$, the effective inhibition function $I(C)$, the
physical critical concentration \(C_c\), or the distance $\mu$ from the
saddle-node independently. A more stringent test of the proposed mechanism
would require time-resolved measurements of receptor occupancy or
intracellular signalling together with viability, allowing the relation
between concentration, effective inhibition, and ghost residence time to
be tested directly.

\subsection{Mechanism for the bump: adaptive compensation}

A purely inhibitory ghost model cannot produce $V(t;C)>1$ relative to control, because that would
require the treated trajectory to have a larger time-integrated net growth rate than the untreated one.
To account for early low-dose overshoot, we use the adaptive variable $B$ rather than an ad hoc forcing
term.  The simplest reduced activity--adaptation subsystem is
\begin{align}
\dot a &= \nu(C)-\rho b-a^2,
\label{eq:abump}\\
\dot b &= \varepsilon\bigl(\kappa(C)+\xi a-b\bigr),
\qquad 0<\varepsilon\ll 1,
\label{eq:bbump}
\end{align}
where \(a=A-A_c\) is the local activity coordinate,
\(\nu(C)\) captures the direct occupancy-mediated drive,
\(\kappa(C)\) is a treatment-dependent adaptive input, and
\(\rho,\xi>0\).

For fixed \(a\) and \(C\), the adaptive variable \(b\) relaxes toward
the quasi-steady value \(b_{\mathrm{qs}}(a,C)=\kappa(C)+\xi a\).
Because \(0<\varepsilon\ll1\), \(b\) cannot immediately attain this
value following treatment. The negative-feedback term \(-\rho b\) may
therefore differ transiently from its quasi-steady value. For suitable
treatment dependence of \(\nu\) and \(\kappa\), and suitable initial
conditions, this adaptive lag can allow the treated activity to exceed
the untreated activity over a finite time interval. If the resulting
time-integrated activity gap is positive, Equation~\eqref{eq:Vintegral}
shows that \(V(t;C)>1\) may occur. Thus adaptive lag provides a possible,
but not universal, mechanism for an early-time viability bump.

This mechanism is more defensible than imposing a scalar
concentration-dependent bump term by hand because the overshoot arises
from a dynamical adaptive state. It is also consistent with the biological
logic of adaptive rewiring and feedback relief in targeted therapy
\cite{Chandarlapaty2012,Rosell2015,Pazarentzos2015}.

\subsection{$IC_{50}(t)$ and transient potency shifts}

Whenever \(C\mapsto V(t;C)\) is continuous and strictly decreasing with
a crossing at \(1/2\), define the time-dependent half-maximal inhibitory
concentration by \(V\bigl(t;IC_{50}(t)\bigr)=1/2\).
Because viability depends on the integrated activity history, not just on receptor occupancy,
$IC_{50}(t)$ need not be constant in time. In the ghost regime, intermediate concentrations preserve
activity long enough that the early dose--response curve appears flatter than the late one. As the ghost
is exited and adaptive compensation relaxes, the apparent potency increases, and $IC_{50}(t)$ shifts
downward.

The model therefore predicts that two assays with the same binding kinetics can report different
potencies if they are sampled at different times or if they differ in trafficking and feedback context.
This is a central conceptual advantage of the network-embedded formulation.

\section{Discussion}
\label{sec:discussion} 
The manuscript separates three levels of description: fast receptor occupancy, intracellular
signalling activity, and phenotypic viability. The occupancy layer is not specified in full
mechanistic detail. Instead, it is represented by an abstract variable \(X(t)\) that relaxes rapidly
to a quasi-steady state \(X_{\mathrm{qs}}(C)\). This captures the essential role of competitive
occupancy models such as those in \cite{Anguelov2023} while avoiding unnecessary assumptions about
receptor trafficking or feedback from adaptation to occupancy.

This reformulation clarifies the role of the slow adaptive variable \(B\). In the present model,
\(B\) acts at the signalling-network level; it does not directly alter receptor occupancy. Therefore
the quasi-steady occupancy state is written as \(X_{\mathrm{qs}}(C)\) rather than
\(X_{\mathrm{qs}}(C,B)\). A dependence on \(B\) would require an explicit biological mechanism,
such as receptor downregulation, altered trafficking, or changes in binding affinity. Since those
mechanisms are not included, the cleaner formulation avoids this hidden coupling.

The ghost mechanism arises downstream in the activity--adaptation subsystem
\cite{Stanoev2020,Aldridge2006}. Occupancy provides a fast treatment-dependent input, while the
nonlinear interaction of signalling activity \(A\) and slow adaptation \(B\) produces saddle-node
criticality, ghost residence, delayed pathway shutdown, and transient apparent resistance. Thus the
extension preserves the mathematical spirit of competitive models such as in
\cite{Anguelov2023} while moving the source of nontrivial transient dynamics to the biologically
appropriate signalling layer.

For the sigmoidal activity map, this mechanism is no longer merely a normal-form hypothesis.
Positive feedback satisfying \(\gamma>4\) produces distinct fold candidates, and whenever
\(B_{\mathrm f}(C)\in(0,1)\), one is a biologically admissible high-activity frozen fold. At a
selected concentration \(C_{\mathrm f}\), the inequality
\(\Psi(A_{\mathrm f},C_{\mathrm f})>B_{\mathrm f}(C_{\mathrm f})\) directs the adaptive flow
across that fold, with the normalized transverse coefficient \(\gamma_c\) given by
\eqref{eq:explicit-gamma-c}. The directional transfer argument in
Subsection~\ref{sec:directional-transfer} avoids imposing uniform Euclidean contraction at the
fold, which is incompatible with the nonzero \(B\)-to-\(A\) coupling required for the adaptive
crossing.

The fast--slow formulation also distinguishes two related but mathematically different notions
of criticality. The fold curve of the frozen activity subsystem is a fold of the critical manifold
obtained when \(B\) is treated as a parameter. A saddle-node equilibrium of the coupled
activity--adaptation system must additionally lie on the adaptive nullcline
\(B=\Psi(A,C)\), and its degeneracy condition includes the feedback contribution
\(\partial_B\widehat F\,\partial_A\Psi\). Thus a frozen activity fold should not in general be
identified with a saddle-node equilibrium of the coupled system. Nevertheless, it remains
dynamically relevant because the slowly evolving adaptive variable may carry a trajectory through
its neighbourhood.

This distinction produces two asymptotic delay regimes, with the crossover ratio
\(\chi=\varepsilon\gamma_c/\delta^{3/2}\). When
\(\varepsilon/\delta^{3/2}\to0\), or equivalently \(\chi\to0\) for fixed positive
\(\gamma_c\), adaptation is negligible during the bottleneck and the frozen-fold law
\(T\sim\pi/\sqrt{\delta}\) remains valid. When \(\chi=O(1)\), static escape and adaptive drift
act on comparable scales, although \(\chi\) is not a uniform transition law. For a trajectory that
has tracked the attracting branch before a transverse crossing, the dynamic delay satisfies
\(t_{\mathrm{exit}}-t_c\sim z_{\mathrm{Ai}}(\varepsilon\gamma_c)^{-1/3}\), with exit
displacement of order \((\varepsilon\gamma_c)^{2/3}\). If \(\gamma_c=0\), this Airy scaling
does not apply directly. The transfer of these reduced laws to the full occupancy system also
requires \(\eta=o(\delta)\) in the frozen regime and
\(\eta=o((\varepsilon\gamma_c)^{2/3})\) in the dynamic regime, as established in
Subsection~\ref{sec:directional-transfer}.

The present viability measurements do not distinguish these regimes. The proof-of-concept
observation model assumes approximately frozen adaptation and uses
\(T_{\mathrm g}(C)=\tau/\sqrt{C}\); it therefore tests compatibility with a
concentration-dependent delay, not the dynamic Airy law. Because only viability was observed,
the data cannot independently estimate \(\varepsilon\), \(\gamma_c\), or the distance
\(\delta\) from the fold. The fit is therefore compatible with a concentration-dependent delay
phenotype, but it does not establish a frozen-fold passage, a coupled saddle-node, or a particular
adaptive timescale. Similarly, Figure~1 illustrates the theoretical bifurcation geometry rather
than reconstructing it from the limited data. Distinguishing these mechanisms would require
time-resolved measurements of signalling activity and adaptive regulation together with viability.

Several extensions are natural. One may estimate the occupancy parameters from binding data and
the activity/adaptation parameters from time-resolved phospho-signalling measurements. One may
also include stochastic forcing near the fold to study variability in ghost residence times, or
multiple adaptive nodes to distinguish phosphatase induction from transcriptional rewiring. Those
steps would move the theory closer to direct assay fitting while preserving the conceptual core
developed here.

\noindent\textbf{Data Availability Statement. } 
The data analyzed in this study were part of experimental measurements originally published in \cite{Anguelov2023} and are graphically represented in Figure 2

\noindent\textbf{Funding Information. } The second author was supported by the DST/NRF SARChI Chair on Mathematical Models and Methods in Bioengineering and Biosciences at the University of Pretoria.

\bibliographystyle{wmaainf}
\bibliography{IMA_extension.bib}

\end{document}